\documentclass[11pt]{article}
\usepackage[left]{lineno}
\usepackage{blindtext}
\usepackage{amstext, amsmath,latexsym,amsbsy,amssymb}
\usepackage{enumitem}

\newtheorem{proposition}{Proposition}[section]
\newtheorem{remark}{Remark}[section]
\newenvironment{proof}{{\bf Proof.\ }}{\QED\\}
\newtheorem{lemma}{Lemma}[section]

\numberwithin{equation}{section}
\newtheorem{theorem}{Theorem}[section]
\newtheorem{corollary}{Corollary}[section]
\newcommand{\QED}{\hspace*{\fill}\rule{2.5mm}{2.5mm}}

\usepackage{amssymb}\usepackage{graphicx}
\usepackage{tikz}
\usepackage{color}
\newcommand\qed{\hfill$\sqcap\kern-7.5pt\hbox{$\sqcup$}$}

\newcommand{\RR}{\mathbb{R}}

\newcommand{\beqn}{\begin{equation}}
\newcommand{\eeqn}{\end{equation}}
\newcommand{\bear}{\begin{eqnarray}}
\newcommand{\eear}{\end{eqnarray}}
\newcommand{\bean}{\begin{eqnarray*}}
\newcommand{\eean}{\end{eqnarray*}}

\allowdisplaybreaks

\begin{document}

\linenumbers

\title{The Cauchy problem and BEC stability for the quantum Boltzmann-Condensation system for bosons at very low temperature}

\author{Ricardo Alonso*, Irene M. Gamba**, Minh-Binh Tran ***\\
*Department of Mathematics \\
Pontificia Universidad Catolica at Rio de Janeiro\\
email: ralonso@mat.puc-rio.br\\
**Department of Mathematics \\
University of Texas Austin\\
email: gamba@math.utexas.edu\\
**Department of Mathematics \\
University of Wisconsin-Madison\\
email: mtran23@wisc.edu}

\maketitle

\begin{abstract} 
We study a quantum Boltzmann-Condensation system that describes the evolution of the interaction between a well formed Bose-Einstein condensate and the quasi-particles cloud.  The kinetic model is valid for a dilute regime at which the temperature of the gas is very low compared to the Bose-Einstein condensation critical temperature.  In particular, our system couples the density of the condensate from a Gross-Pitaevskii type equation to the kinetic equation through the dispersion relation in the kinetic model and the corresponding transition probability rate from pre to post collision momentum states. We rigorously show the following three properties (1) the well-posedness of the Cauchy problem for the system in the case of a radially symmetric initial configuration, (2) find qualitative properties of the solution such as instantaneous creation of exponential tails and, (3) prove the uniform condensate stability related to the initial mass ratio between condensed particles and quasi-particles.  The stability result from (3) leads to global in time existence of the initial value problem for the quantum Boltzmann-Condensation system.  
\end{abstract}

{\bf Keywords} {Quantum kinetic theory, low-temperature Bose particles, stability of BECs, spin-Peierls model,  moments method, abstract ODE theory.}\\ 

{\bf MSC:} {82C10, 82C22, 82C40.}

\tableofcontents
\section{Introduction}
After the first Bose-Einstein Condensate (BEC) was produced by Cornell, Wieman, and Ketterle \cite{WiemanCornell,Ketterle}, there has been an immense amount of research on BECs and cold bosonic gases.  Above the condensation temperature, the dynamic of a bose gas is determined by the Uehling-Uhlenbeck kinetic equation introduced in \cite{UehlingUhlenbeck:TPI:1933}; see for instance \cite{EscobedoVelazquez:2015:FTB} for interesting results and list of references.  The first proof of BECs was done in \cite{lieb2002proof}. Below the condensation temperature, the bosonic gas dynamics is  governed by a system that couples a quantum Boltzmann  and a Gross-Pitaevskii equations. In such a system, the wave function of the BEC follows the Gross-Pitaevskii equation and the quantum Boltzmann equation describes the evolution of the density function of the excitations (quasi-particles). The system   was first derived by Kirkpatrick and Dorfmann in \cite{KD1,KD2}, using a Green function approach and  was revisited by Zaremba-Nikuni-Griffin and Gardiner-Zoller et. al. in \cite{QK1,QK0,ZarembaNikuniGriffin:1999:DOT}. It  has, then, been developed and studied extensively in the last two decades by several authors from the application perspective (see \cite{bijlsma2000condensate,ColdAtoms1,Stoof:1999:CVI}, and references therein). In \cite{Spohn:2010:KOT}, Spohn gave a heuristic derivation for the one-dimensional version of the system, using a perturbation argument for the Uehling-Uhlenbeck equation. A more formal derivation, for the full three dimensional case, is done in \cite{ReichlTran} where some ideas from the works \cite{ChenHainzl:UUF:2015,ErdosSchleinYau:DOT:2010} were taken together with techniques from quantum field theory.

In this work, we focus on the rigorous mathematical study of the dynamics of dilute Bose gases modeled by the quantum Boltzmann equation at very low temperature  coupled to  the condensation model at the quantum level. The quantum Boltzmann model that we referred to was introduced in \cite{E,EPV,KD1,KD2}, that is, the BEC is well formed and the interaction between excited atoms is secondary relative to the interaction between excited atoms with the BEC.  The condensation at the quantum level may be described by classical models such as Gross-Pitaevskii  \cite{ChenHainzl:UUF:2015,ErdosSchleinYau:DOT:2010,lieb2002proof}.  At this quantum level the BEC mass is given by $n_c=n_c(t):=|\Psi|^2(t)$, where $\Psi$ is the wave function of the quantum condensation satisfying a Gross-Pitaevskii type equation with an absorption term proportional to the averaged of the interacting particle (collision) operator from the quantum kinetic model, and the corresponding quantum probability density of the excited states evolves according to the quantum Boltzmann equation with interacting particle (collision) operator proportional to the condensate $n_c(t)$ (cf. \cite{ArkerydNouri:2012:BCI,Spohn:2010:KOT,PomeauBrachetMetensRica}).

Under these assumptions, the evolution of the space homogeneous probability density distribution function $f:=f(t,p)$, with $(t,p)\in[0,\infty)\times\mathbb{R}^{3}$, for $p$ the momenta state variable, of the excited bosons and the condensate mass $n_{c}:=n_{c}(t)$ can be described by the following Boltzmann-Gross-Pitaevskii system
\begin{equation}\label{QuantumBoltzmann}
\left\{\begin{array}{clc}
\frac{\text{d}f}{\text{d}t} &= n_c\,Q[n_{c}, f]\,, & f(0,\cdot)=f_0\,,\vspace{0.2cm}\\ 
\frac{\text{d}n_{c}}{\text{d}t} &= -n_c \int_{\mathbb{R}^{3}}\text{d}p\,Q[n_{c}, f]\,, & n_{c}(0)=n_{0}\,,
\end{array}\right.
\end{equation}
where the interaction operator is defined as
\begin{align}\label{E1}
\begin{split}
Q[n_{c},f]:=\int _{ \RR^3}&\int _{ \RR^3} \text{d}p_1\text{d}p_2 \big[R(p, p_1, p_2)-R(p_1, p, p_2)-R(p_2, p_1, p) \big]\,,\\
R(p, p_1, p_2):=\\
&\hspace{-1.5cm}|\mathcal M(p, p_1, p_2)|^2\big[\delta \left(  \frac{\omega (p)}{k_BT} -\frac{\omega (p_1)}{k_BT}-\frac{\omega (p_2)}{k_BT}  \right)  \delta (p-p_1-p_2)\big]\\
&\hspace{-.5cm}\times\big[ f(p_1)f(p_2)(1+f(p))-(1+f(p_1)(1+f(p_2))f(p) \big]\,,
\end{split}
\end{align}
where $\beta :=\frac {1} {k_B T}>0$ is a physical constant depending on the Boltzmann constant $k_B$, and the temperature of the quasiparticles $T$ at equilibrium.  The particle energy $\omega (p)$ is given by the Bogoliubov dispersion law
\bear
\omega (p)=\left[\frac {g n_c} {m}|p|^{2}+\left(\frac {|p|^2} {2m} \right)^2 \right]^{1/2}, \label{E3}
\eear
where $p\in\mathbb{R}^{3}$ is the momenta, $m$ is the mass of the particles, $g$ is an interaction ``excited-condensate'' coupling constant and $n_c$ is the condensate mass, as introduced earlier.

The term $\mathcal M(p, p_1, p_2)$ is referred as the transition probability or matrix element (as much as collision kernel).  Its constitutive relation depends on the dispersion relation $\omega (p)$ and, consequently, strongly couples the quantum Boltzmann equation to the quantum condensate.

In the regime treated in this document, the transition probability can be approximated up to first order to a workable expression.  Indeed, we restrict the range of the temperature $T$, the condensate density $n_c$, and the interaction coupling constant $g$ to values  for which  $k_BT$ is much smaller than $(gn_c/m)^{1/2}$, i.e. a {\em cold gas regime}.  Under this condition, the dispersion law $\omega (p)$  in \eqref{E3} is approximated by 
\begin{equation*}\label{E6}
\frac1{k_BT} \left[\frac {g n_c} {m}|p|^{2}+\left(\frac {|p|^2} {2m} \right)^2 \right]^{1/2} \approx  \frac c{k_BT}|p|, \quad \text{where }\;c:=\sqrt{\frac {gn_c} {m}} \, , 
\end{equation*}
as long as  $|p| \ll 2\sqrt{gn_cm}$. In particular, the energy will now be defined by the phonon dispersion law (still using the same notation), see \cite{E,IG} 
\begin{equation}\label{E6}
\omega (p)=c |p|, \quad \text{for}\;c:=c(t) =\sqrt{\frac {gn_c(t)} {m}}\, .
\end{equation}
Under the {\em cold gas regime}, the  transition probability $\mathcal M$ is   approximated by (see, for instance \cite[eq. (7)]{EPV}, \cite[eq. (83)]{IG}, \cite[eq. (42)]{E})
\begin{equation}\label{E7}
|\mathcal M|^2=\kappa|p||p_1||p_2|\,
\end{equation}
where 
\begin{equation}\label{E7a}
\kappa=\frac {9c}{64\pi ^2mn_c^2} = \frac{9 }{64\pi^{2}(mg n_{c})^{3/2}}.
\end{equation}

Note that the transition probability could also be approximated as (cf. \cite{ArkerydNouri:2012:BCI})
$$|\mathcal M|^2=\frac{\omega(p)\omega(p_1)\omega(p_2)}{32g^3n_c^3}.$$

We perform the analysis in the whole momentum space, not in a piece of it or the torus \cite{Spohn:TPB:2006}, requiring a detailed control of the solution's tails and low temperature behavior.  

Using that $\delta(\cdot)$ is homogeneous of degree $-1$, the reduced phonon dispersion law \eqref{E6} is implemented as $\delta(c|p|)=c^{-1}\delta(|p|)$, and so the quantum collisional integral \eqref{E1} becomes
\begin{align}\label{EE1}
\begin{split}
Q[n_{c},f]:&=\kappa c^{-1}\int _{ \RR^3}\int _{ \RR^3}\text{d}p_1\text{d}p_2 \big[R(p, p_1, p_2)-R(p_1, p, p_2)-R(p_2, p_1, p) \big]\\
& R(p, p_1, p_2):=  \mathcal{K}(|p|,|p_1|,|p_2|)\big[\delta \left(   |p|-|p_1|-|p_2|    \right)  \delta (p-p_1-p_2)\big]\\
&\quad\times\big[ f(p_1)f(p_2)(1+f(p))-(1+f(p_1)(1+f(p_2))f(p) \big]\,.
\end{split}
\end{align}
Here we introduced $\mathcal{K}(|p|,|p_{1}|,|p_{2}|):=|p||p_{1}||p_{2}|$.  Clearly, from the interaction law $p=p_1 +p_2$ and $|p| =  |p_1| +|p_2|$ modeled in the collision operator by the singular Dirac delta masses, this trilinear collisional form \eqref{EE1} is reduced into a bilinear one, that can be split in the difference of two positive quadratic operators, as will be shown in the existence result.
\\
In addition,  the low temperature quantum collisional form \eqref{EE1} can be split into  {\em gain} and  {\em loss} operator forms
\begin{align}\label{GainLossSplit}
\begin{split}
Q[n_{c},&f](t,p) = Q^+[n_{c},f](t,p)  - Q^-[n_{c},f](t,p) \\
& = \kappa c^{-1}\Big(\mathcal{Q}^+[f](t,p) - f(t,p)\, \nu[f](t,p)\Big)=:\kappa c^{-1}\mathcal{Q}[f](t,p), 
\end{split}
\end{align}
as is done with the classical Boltzmann operator.  Here, the gain operator is also defined by the positive contributions in the total  rate of change in time of the collisional form $Q[n_{c},f](t,p)$ in \eqref{EE1}, that is, $Q^+[n_{c},f]=\kappa c^{-1}\mathcal{Q}^+[f]$ where 
\begin{align}\label{GainOperator}
\begin{split}
&\mathcal{Q}^+[f](t,p):=\int_{\mathbb{R}^3}\int_{\mathbb{R}^3}\text{d}p_1\text{d}p_2\,\mathcal{K}(|p|,|p_1|,|p_2|)\delta(p-p_1-p_2)\\
&\hspace{.3cm}\times\delta(|p|-|p_1|-|p_2|)f(t,p_1)f(t,p_2)+2\int_{\mathbb{R}^3}\int_{\mathbb{R}^3}\text{d}p_1\text{d}p_2\,\mathcal{K}(|p|,|p_1|,|p_2|)\\
&\hspace{.8cm}\times\delta(p_1-p-p_2)\delta(|p_1|-|p|-|p_2|)\big[2f(t,p)f(t,p_1)+f(t,p_1)\big]\,.
\end{split}
\end{align}
Similarly, the loss operator models the negative contributions in the total  rate of change in time of same collisional form $Q[n_{c},f](t,p)$. It is local in $f(t,p)$ and so  written $Q^-[n_{c},f] := \kappa c^{-1}\,f\, \nu[f]$, where $\nu[f](t,p)$, referred as the {\em collision frequency or attenuation coefficient}, is defined by 
\begin{align}\label{LossOperator}
\begin{split}
\nu&[f](t,p):=\int_{\mathbb{R}^3}\int_{\mathbb{R}^3}\text{d}p_1\text{d}p_2\,\mathcal{K}(|p|,|p_1|,|p_2|)\delta(p-p_1-p_2)\\
&\times\delta(|p|-|p_1|-|p_2|)\big[2f(t,p_1)+1\big]+2\int_{\mathbb{R}^3}\int_{\mathbb{R}^3}\text{d}p_1\text{d}p_2\,\mathcal{K}(|p|,|p_1|,|p_2|)\\
&\hspace{1cm}\times\delta(p_1-p-p_2)\delta(|p_1|-|p|-|p_2|)f(t,p_2)\,,
\end{split}
\end{align}
and it is nonlocal in $f(t,p)$.  Note that the collisional operator $\mathcal{Q}[f]:=\mathcal{Q}^{+}[f] - f\,\nu[f]$ is independent of $n_{c}$. 

In summary, our goal is to study the Cauchy problem of radial solutions for the  Boltzmann-Gross-Pitaevskii system \eqref{QuantumBoltzmann} at low temperature, which, with the definitions of \eqref{GainLossSplit}, \eqref{GainOperator} and \eqref{LossOperator}, reads
\begin{equation}\label{BGP}
\left\{\begin{array}{clc}
\frac{\text{d}f}{\text{d}t} &= \tfrac{\kappa_{0}}{n_{c}}\mathcal{Q}[f]\,, &f(0,\cdot)=f_0\,,\vspace{0.2cm}\\
\frac{\text{d}n_{c}}{\text{d}t} &= -\tfrac{\kappa_{0}}{n_{c}}\int_{\mathbb{R}^{3}}\text{d}p\,\mathcal{Q}[f]\,, & n_{c}(0)=n_{0}\,,
\end{array}\right.
\end{equation}
where the resulting constant $\kappa_{0}=\frac{9}{64\pi^{2}m}$.\\

The organization of the paper is as follows.
\begin{itemize}
\item [$\cdot$] In section 2 and 3 we present the weak and strong formulations of the collision operator and use them to recall the main conservation laws as well as the entropy estimate corresponding to an $H$-Theorem for \eqref{QuantumBoltzmann} in the low temperature regime collisional form \eqref{EE1}.
\item[$\cdot$] Section 4 considers \textit{a priori} estimates on the observables or moments of solutions.  These are related to high energy tail behavior and will be developed in context of radially symmetric solutions.  Moment propagation techniques have been developed for the classical Boltzmann equation in \cite{AlonsoGamba:2013:ANA,GambaPanferov:2004:OTB,AlonsoGamba:2015:OML}.
\item [$\cdot$] In section 5 we address the central issue of the BEC stability.  It is clear that the condition $n_{c}>0$ is essential for the validity of the approximations that have been made in the derivation of the model.  In this section we take advantage of the nonlinear nature of the equation to derive $L^{\infty}$-estimates that allow us to show the BEC uniform stability.  Natural conditions in terms of the ratio between the initial mass of the condensate and quasi-particles are necessary for the sustainability of the condensate in the long run.  This result formalizes the validity of the decomposition of the total density of the gas between a singular part (condensate) and a regular part (quasi-particles) and leads to global in time well-posedness of the problem.
\item [$\cdot$] The existence and uniqueness arguments given in section 6 are based on the \textit{a priori} estimates on the solution's moments and the $L^{\infty}$-estimate provided for BEC stability.  When such estimates are combined with classical abstract ODE theory, the result is a robust and elegant technique to prove well-posedness for collisional integral equations.
\item [$\cdot$] Finally, in section 7, we show that solution to the Cauchy problem have exponential decaying tails in the sense of $L^1(\mathbb{R}^3)$, which are referred to as Mittag-Leffler tails that were introduced for the Boltzmann equation in \cite{AlonsoGamba:2015:OML}.  This result formalizes, at least qualitatively, the approximations that are made in the low temperature regime were narrow distribution profiles are assumed.
\end{itemize}

\section{Weak and strong formulation of collisional forms}

The following properties hold for the low temperature quantum collisional form \eqref{EE1} remarking that, for notational convenience, we will usually omit the time variable $t$ unless some stress is necessary in the context.   

\begin{proposition}[Weak Formulation]\label{Lemma:WeakFormulation}
For any suitable test function $\varphi$, the following weak  formulation holds for the collision operator $\mathcal{Q}$
\begin{align}\label{Lemma:WeakFormulation:Eq1}
\begin{split}
\int_{\mathbb{R}^3}& dp\, \mathcal{Q}[f](p)\varphi(p)=\int_{\mathbb{R}^3}\int_{\mathbb{R}^3}\int_{\mathbb{R}^3}dp\,dp_1\,dp_2\, \mathcal{K}(|p|,|p_1|,|p_2|)\delta(p-p_1-p_2)\\
&\times \delta(|p|-|p_1|-|p_2|)\Big[ f(p_1)f(p_2)-f(p_1)f(p)-f(p_2)f(p)-f(p) \Big]\\
&\hspace{1cm}\times \Big[\varphi(p)-\varphi(p_1)-\varphi(p_2)\Big]\\
&\hspace{-.7cm}=2\pi\int_{\mathbb{R}^3}dp_1\int_{\mathbb{R}^+}|p_2|^{2}d|p_2|\,\frac{|p_{1}|+|p_{2}|}{|p_{1}||p_{2}|}\mathcal{K}\big(|p_1|+|p_2|, |p_1|, |p_2|\big)\Big[ f(p_1)f(|p_2|\widehat{p_1}) \\
& -f(p_1)f(p_1+|p_2|\widehat{p_1})-f(|p_2|\widehat{p_1})f(p_1+|p_2|\widehat{p_1})-f(p_1+|p_2|\widehat{p_1}) \Big]\\
&\times \Big[\varphi(p_1+|p_2|\widehat{p_1})-\varphi(p_1)-\varphi(|p_2|\widehat{p_1})\Big]\,,
\end{split}
\end{align}
As a consequence, for radially symmetric functions $f(p):=f(|p|)$ and $\varphi(p):=\varphi(|p|)$, the following holds true
\begin{align}\label{WeakFormulation:radial}
\begin{split}
\int_{\mathbb{R}^3}&dp\,\mathcal{Q}[f](p)\varphi(p) =8\pi^{2}\int_{\mathbb{R}_+}\int_{\mathbb{R}_+}d|p_1|\,d|p_2|\,\mathcal{K}_0\big(|p_{1}| + |p_{2}|,|p_{1}|,|p_{2}|\big) \times \\
&\hspace{-.2cm}\Big[ f(|p_1|)f(|p_2|) - f(|p_1|)f(|p_1|+|p_2|) - f(|p_2|)f(|p_1|+|p_2|) \\
& - f(|p_1|+|p_2|) \Big] \times \Big[\varphi(|p_1|+|p_2|)-\varphi(|p_1|)-\varphi(|p_2|)\Big]\,,
\end{split}
\end{align}
where $\mathcal{K}_{0}(|p|,|p_{1}|,|p_{2}|):=|p||p_{1}||p_{2}|\mathcal{K}(|p|,|p_{1}|,|p_{2}|)=|p|^{2}|p_{1}|^{2}|p_{2}|^{2}$.
\end{proposition}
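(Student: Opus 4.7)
The plan is to prove the three equalities sequentially by a standard symmetrization-plus-delta-reduction argument.

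\emph{Step 1 (symmetrization).} First, I multiply the strong form \eqref{EE1} of $\mathcal{Q}[f]$ by $\varphi(p)$ and integrate in $p$, obtaining three triple integrals from the three copies of $R$. In the integral containing $R(p_1,p,p_2)\varphi(p)$, I swap $p\leftrightarrow p_1$; because $\mathcal{K}(|p|,|p_1|,|p_2|)=|p||p_1||p_2|$ is invariant under permutation of its arguments and the Lebesgue measure $dp\,dp_1\,dp_2$ is symmetric, this relabels the expression as $R(p,p_1,p_2)\varphi(p_1)$. Swapping $p\leftrightarrow p_2$ in the third triple integral analogously produces $R(p,p_1,p_2)\varphi(p_2)$. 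Combining the three pieces yields a single triple integral with test weight $[\varphi(p)-\varphi(p_1)-\varphi(p_2)]$. A direct expansion of $f(p_1)f(p_2)(1+f(p))-(1+f(p_1))(1+f(p_2))f(p)$ cancels the $f(p)f(p_1)f(p_2)$ term and collapses to $f(p_1)f(p_2)-f(p_1)f(p)-f(p_2)f(p)-f(p)$, establishing the first equality.

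\emph{Step 2 (reduction of the delta functions).} Next, I use $\delta(p-p_1-p_2)$ to integrate out $p$, substituting $p=p_1+p_2$. The remaining constraint $\delta(|p_1+p_2|-|p_1|-|p_2|)$ is supported on the collinear configurations $\hat{p}_2=\hat{p}_1$. Writing $p_2=|p_2|\hat{p}_2$ with $dp_2=|p_2|^2\,d|p_2|\,d\hat{p}_2$ and choosing spherical coordinates on $S^2$ with polar axis along $\hat{p}_1$, the integrand is azimuthally independent at the collinear point, so the $\phi$-integral contributes a factor $2\pi$. With $u=\cos\theta$ and $G(u):=\sqrt{|p_1|^2+|p_2|^2+2|p_1||p_2|\,u}-|p_1|-|p_2|$, the angular constraint becomes $\delta(G(u))$ whose unique zero at $u=1$ has $|G'(1)|=|p_1||p_2|/(|p_1|+|p_2|)$, producing the factor $(|p_1|+|p_2|)/(|p_1||p_2|)$. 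Substituting $p_2\mapsto|p_2|\hat{p}_1$ and $p\mapsto p_1+|p_2|\hat{p}_1$ inside the brackets yields the second equality.

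\emph{Step 3 (radial reduction).} When both $f$ and $\varphi$ are radial, the two brackets evaluated at $(p_1,\,|p_2|\hat{p}_1,\,p_1+|p_2|\hat{p}_1)$ depend only on $|p_1|$ and $|p_2|$, since $\bigl|p_1+|p_2|\hat{p}_1\bigr|=|p_1|+|p_2|$. Writing $dp_1=|p_1|^2\,d|p_1|\,d\hat{p}_1$ then contributes an extra factor $4\pi$ from $\int_{S^2}d\hat{p}_1$, which combines with the $2\pi$ of Step 2 into the prefactor $8\pi^2$. Finally, the kernel collapses via
\begin{equation*}
\tfrac{|p_1|+|p_2|}{|p_1||p_2|}\,\mathcal{K}(|p_1|+|p_2|,|p_1|,|p_2|)\,|p_1|^{2}|p_2|^{2}=(|p_1|+|p_2|)^{2}|p_1|^{2}|p_2|^{2}=\mathcal{K}_0(|p_1|+|p_2|,|p_1|,|p_2|),
\end{equation*}
confirming the radial formula.

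\emph{Main obstacle.} The principal delicate point is the angular delta reduction in Step 2: the unique zero $u=1$ of $G$ sits at the boundary of the coordinate range $[-1,1]$ even though $\hat{p}_2=\hat{p}_1$ is an interior point of $S^2$. To secure the $1/|G'(1)|$ weight rigorously I would either approximate the singular constraint by a smooth mollifier $\delta_\epsilon$ and pass to the limit, or work directly on $S^2$ through a local chart around $\hat{p}_1$ (for instance stereographic projection from $-\hat{p}_1$) in which the collinear point lies in the interior of the domain; both routes bypass the spherical-coordinate artifact and produce the same weight.
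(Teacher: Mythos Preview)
Your proposal is correct and follows essentially the same route as the paper: symmetrize via the swaps $p\leftrightarrow p_1$ and $p\leftrightarrow p_2$, evaluate the momentum delta at $p=p_1+p_2$, then reduce $\delta(|p_1+p_2|-|p_1|-|p_2|)$ in spherical coordinates about $\hat{p}_1$ using the derivative $y'(1)=|p_1||p_2|/(|p_1|+|p_2|)$, and finish with polar coordinates in $p_1$ for the radial case. Your added remark on the boundary artifact at $u=1$ is a point the paper passes over without comment, but otherwise the arguments coincide.
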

\begin{proof}
In this proof we use the short-hand $\int:=\int_{\mathbb{R}^{9}}\text{d}p\,\text{d}p_1\,\text{d}p_2$.  First, observe that 
\begin{align}\label{Lemma:WeakFormulation:Eq2}
\begin{split}
&\int_{\mathbb{R}^3}\text{d}p\,\mathcal{Q}[f](p)\varphi(p) = \\
&\hspace{.3cm}\int  \mathcal{K}(|p|,|p_1|,|p_2|)\delta(p-p_1-p_2)\delta(|p|-|p_1|-|p_2|)R(p, p_1, p_2)\varphi(p)\\
&\hspace{.6cm}-\int \mathcal{K}(|p|,|p_1|,|p_2|)\delta(p-p_1-p_2)\delta(|p|-|p_1|-|p_2|)R(p_1, p, p_2)\varphi(p)\\
&\hspace{.9cm}-\int  \mathcal{K}(|p|,|p_1|,|p_2|)\delta(p-p_1-p_2)\delta(|p|-|p_1|-|p_2|)R(p_2, p_1, p)\varphi(p)\,.
\end{split}
\end{align}
Second, interchanging variables $p\leftrightarrow p_1$ and $p\leftrightarrow p_2$,
\begin{equation}\label{Lemma:WeakFormulation:Eq3}
\int  \mathcal{K}(|p|,|p_1|,|p_2|) 
R(p_1, p, p_2)\varphi(p) = \int  \mathcal{K}(|p|,|p_1|,|p_2|) R(p, p_1, p_2)\varphi(p_1)\,,
\end{equation}
and
\begin{equation}\label{Lemma:WeakFormulation:Eq4}
\int  \mathcal{K}(|p|,|p_1|,|p_2|) R(p_2, p_1, p)\varphi(p) = \int  \mathcal{K}(|p|,|p_1|,|p_2|) R(p, p_1, p_2)\varphi(p_2)\,.
\end{equation}
Combining \eqref{Lemma:WeakFormulation:Eq2}, \eqref{Lemma:WeakFormulation:Eq3}, \eqref{Lemma:WeakFormulation:Eq4}, we get the first equality in \eqref{Lemma:WeakFormulation:Eq1}.  Now, evaluate the Dirac in $p=p_{1}+p_2$ (conservation of momentum) to obtain
\begin{align}\label{Lemma:WeakFormulation:Eq5}
\begin{split}
\int_{\mathbb{R}^3}\text{d}p&\,\mathcal{Q}[f](p)\varphi(p)= \int_{\mathbb{R}^3}\int_{\mathbb{R}^3}\mathcal{K}(|p_{1}+p_{2}|,|p_1|,|p_2|)\delta(|p_1+p_2|-|p_1|-|p_2|)\\
&\Big[ f(p_1)f(p_2)-f(p_1)f(p_1+p_2)-f(p_2)f(p_1+p_2)-f(p_1+p_2) \Big]\\
& \hspace{1cm}\times \Big[\varphi(p_1+p_2)-\varphi(p_1)-\varphi(p_2)\Big]\text{d}p_{1}\,\text{d}p_2\,,
\end{split}
\end{align}
Now, observe that $|p_1+p_2|-|p_1|-|p_2|=0$ if and only if $\widehat{p_1}\cdot\widehat{p_2}=1$.  Since,  $$|p_1+p_2|-|p_1|-|p_2|=\big(|p_{1}|^{2} + |p_{2}|^{2} + 2|p_{1}||p_{2}|\widehat{p_1}\cdot\widehat{p_2}\big)^{1/2}-|p_{1}|-|p_{2}|\,,$$ it follows from a polar change of variable, taking $\widehat{p_1}$ as the zenith, that the following identity holds for any continuous function $F(p_2)$
\begin{align*}
\int_{\mathbb{R}^{3}}&\text{d}p_2\,F(p_{2})\,\delta\big(|p_1+p_2|-|p_1|-|p_2|\big)\\
&=\int_{\mathbb{R}^{+}}|p_2|^{2}\text{d}|p_2|\int^{2\pi}_{0}\text{d}\phi\int^{1}_{-1}\text{d}s\, F\big(p_{2}(s,\sin(\phi))\big)\delta(y(s))\\
&=2\pi\int_{\mathbb{R}^{+}}|p_2|^{2}\text{d}|p_2|\,\frac{F(|p_{2}|\widehat{p_1})}{y'(1)} = 2\pi\int_{\mathbb{R}^{+}}|p_2|^{2}\text{d}|p_2|\,F(|p_{2}|\widehat{p_1}) \frac{|p_1|+|p_{2}|}{|p_{1}||p_{2}|}\,,
\end{align*}   
where $y(s)=\big(|p_{1}|^{2} + |p_{2}|^{2} + 2|p_{1}||p_{2}|s\big)^{1/2}-|p_1|-|p_2|$.
In the second identity we used that $p_{2}(1,\sin(\phi))=|p_2|\widehat{p_1}$ and, for the latter, the fact that $y'(1)=\frac{|p_{1}|p_{2}|}{|p_{1}| + |p_{2}|}$.  Using this identity in \eqref{Lemma:WeakFormulation:Eq5} proves the second equality in \eqref{Lemma:WeakFormulation:Eq1}.  Finally, for radially symmetric functions $f(p):=f(|p|)$ and $\varphi(p):=\varphi(|p|)$, one simply uses that $\big|p_1 + |p_2|\widehat{p_1}\big| = |p_1| + |p_2|$ and polar coordinates in the $p_{1}$-integral to obtain \eqref{WeakFormulation:radial}  
\end{proof}
Based on the weak formulation of the collision operator, we can deduce its strong formulation.  The strong formulation will be important for finding $L^{\infty}$-estimates to prove the BEC uniform stability.  The nonlinear part of the operator will play an important role in the estimates, thus, in this context we write the operator as a quadratic part and a linear part
\begin{equation*}
\mathcal{Q}[f](p) = \mathcal{Q}_{q}[f](p) + \mathcal{L}[f](p)\,,
\end{equation*}
and stress that this decomposition is different from that of gain and loss parts.  Indeed, the linear part is only a piece of the loss operator which includes bilinear terms.
\begin{corollary}[Strong Formulation]\label{Lemma:StrongFormulation}
Let $f$ be a radially symmetric function.  The strong formulation of the collision operator consists in 9 quadratic terms, namely,
\begin{align*}
\mathcal{Q}_{q}[f]&(|p|) : = 8\pi^{2}\bigg(\int^{|p|}_{0}d|p_{1}|K(|p_{1}|,|p|-|p_{1}|)f(|p_{1}|)f(|p|-|p_{1}|)\\
& + \int^{\infty}_{|p|}d|p_{1}|\big(K(|p_{1}|-|p|,|p|)+K(|p|,|p_{1}|-|p|)\big)f(|p_{1}|)f(|p_{1}|-|p|)\bigg)\\
&\hspace{-1cm} + 8\pi^{2}\,f(|p|)\bigg(\int^{\infty}_{|p|}d|p_{1}|\big(K(|p|,|p_{1}|-|p|) + K(|p_{1}|-|p|,|p|)\big)f(|p_{1}|)\\
& - \int^{|p|}_{0}d|p_{1}|\big(K(|p|-|p_{1}|,|p_{1}|) + K(|p_{1}|,|p|-|p_{1}|)\big)f(|p_{1}|)\\
&- \int^{\infty}_{0}d|p_{1}|\big(\mathcal{K}_{0}(|p|,|p_{1}|) + \mathcal{K}(|p_{1}|,|p|)\big)f(|p_{1}|)\bigg)\,.
\end{align*}
The strong formulation of the linear operator reduces to 3 terms,
\begin{align*}
\mathcal{L}[f](|p|) = 8\pi^{2}&\bigg(\int^{\infty}_{|p|}d|p_{1}|\big(K(|p|,|p_{1}|-|p|)+K(|p_{1}|-|p|,|p|)\big)f(|p_{1}|)\\
& - f(|p|)\int^{|p|}_{0}d|p_{1}|K(|p_{1}|,|p|-|p_{1}|)\bigg)\,,
\end{align*}
where the symmetric collision kernel is defined by $$K(|p_{1}|,|p_{2}|):=\mathcal{K}_{0}(|p_{1}|+|p_{2}|,|p_{1}|,|p_{2}|)=|p_{1}|^{2}|p_{2}|^{2}(|p_{1}| + |p_{2}|)^{2}\,.$$  In these expressions we included the polar Jacobian for notational simplicity.
\end{corollary}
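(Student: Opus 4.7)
The plan is a direct derivation from the radial weak identity~\eqref{WeakFormulation:radial}: expand both brackets, collapse each resulting summand to a one-dimensional integral via an appropriate affine change of variables, and read off the strong formulation by matching coefficients of $\varphi$.

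Concretely, expanding the product $[\cdots]_{f}[\cdots]_{\varphi}$ in~\eqref{WeakFormulation:radial} produces twelve summands $F_{i}\Phi_{j}$ with
$F_{1}=f(|p_{1}|)f(|p_{2}|)$, $F_{2}=-f(|p_{1}|)f(|p_{1}|+|p_{2}|)$, $F_{3}=-f(|p_{2}|)f(|p_{1}|+|p_{2}|)$, $F_{4}=-f(|p_{1}|+|p_{2}|)$, and $\Phi_{1}=\varphi(|p_{1}|+|p_{2}|)$, $\Phi_{2}=-\varphi(|p_{1}|)$, $\Phi_{3}=-\varphi(|p_{2}|)$. For each $j$ I would apply the substitution that turns the $\varphi$-argument into a single variable $p$: for $j=1$ set $p=|p_{1}|+|p_{2}|$ with $|p_{1}|\in(0,p)$ and $|p_{2}|=p-|p_{1}|$; for $j=2$ set $p=|p_{1}|$ and then change $|p_{2}|\mapsto r=|p_{1}|+|p_{2}|\in(p,\infty)$; for $j=3$ proceed symmetrically. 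The Jacobian is unity throughout, and Fubini rewrites each summand as $\int_{0}^{\infty} dp\,\varphi(p)\,[\text{inner integral in one radial variable}]$. Identifying the sum of the twelve coefficients of $\varphi(p)$ with $\mathcal{Q}[f](|p|)$ relies on the convention indicated by the statement's parenthetical remark: the radial $\mathcal{Q}[f](|p|)$ has absorbed the polar Jacobian $4\pi|p|^{2}$, so that $\int_{\mathbb{R}^{3}}\mathcal{Q}[f](p)\varphi(p)\,dp = \int_{0}^{\infty}\mathcal{Q}[f](|p|)\varphi(|p|)\,d|p|$.

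Regrouping the twelve resulting coefficients is then routine. The three $F_{4}$-summands are linear in $f$: the $\Phi_{1}$-substitution contributes $-8\pi^{2}f(|p|)\int_{0}^{|p|}K(|p_{1}|,|p|-|p_{1}|)\,d|p_{1}|$, while $\Phi_{2}$ and $\Phi_{3}$ yield symmetric pieces over $(|p|,\infty)$; together they constitute $\mathcal{L}[f]$. Of the nine quadratic summands, the three coming from $F_{1}\Phi_{1}$, $F_{2}\Phi_{3}$, $F_{3}\Phi_{2}$ produce contributions \emph{without} a free $f(|p|)$ factor and assemble into the first two lines of $\mathcal{Q}_{q}[f]$; the pair $F_{2}\Phi_{3}$, $F_{3}\Phi_{2}$ combines via the kernel symmetry $K(a,b)=K(b,a)$ into the symmetrized form $K(|p_{1}|-|p|,|p|)+K(|p|,|p_{1}|-|p|)$. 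The remaining six summands all carry a free $f(|p|)$ factor and, after analogous pairings on $(0,|p|)$, on $(|p|,\infty)$, and on $(0,\infty)$, produce the last three lines of $\mathcal{Q}_{q}[f]$.

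The main obstacle is purely bookkeeping: twelve cases, three substitution patterns, and careful tracking of signs and integration domains. No analytic difficulty arises; the work is a mechanical but meticulous tabulation indexed by $(i,j)$, after which the nine quadratic and three linear pieces in the statement fall out by collecting like terms.
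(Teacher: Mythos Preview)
Your proposal is correct and follows precisely the same route as the paper: expand the radial weak formulation~\eqref{WeakFormulation:radial} into its twelve summands, apply the appropriate affine change of variables so that each becomes $\int_{0}^{\infty}\varphi(|p|)\,[\cdot]\,d|p|$, and read off the strong coefficients. The paper's own proof in fact carries out only the single term $F_{1}\Phi_{1}$ explicitly and leaves the remaining eleven to the reader, so your systematic $(i,j)$-tabulation is more detailed than what the paper presents but identical in spirit.
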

\begin{proof}
The strong formulation follows by a simple, yet tedious, calculation involving change of variables.  For instance, take the first term in the radial weak formulation \eqref{WeakFormulation:radial}
\begin{align*}
\int_{\mathbb{R}_+}&\int_{\mathbb{R}_+}\text{d}|p_1|\,\text{d}|p_2|\,K\big(|p_{1}|,|p_{2}|\big)\,f(|p_1|)f(|p_2|)\varphi(|p_1|+|p_2|)=\\
&\int_{\mathbb{R}_+}\text{d}|p|\varphi(|p|)\Big(\int^{|p|}_{0}\text{d}|p_1|\,K\big(|p_{1}|,|p|-|p_{1}|\big)\,f(|p_1|)f(|p|-|p_{1}|)\Big)\,.
\end{align*}
Since this identity is valid for any suitable test function $\varphi$, one obtains the term
\begin{equation*}
\int^{|p|}_{0}\text{d}|p_1|\,K\big(|p_{1}|,|p|-|p_{1}|\big)\,f(|p_1|)f(|p|-|p_{1}|)
\end{equation*}
in the strong formulation.  Other terms are left to the reader.
\end{proof}
\section{Conservation of laws and $H$-Theorem}
The weak formulation presented in Proposition \ref{Lemma:WeakFormulation} implies the following conservation laws and a quantum version of the classical Boltzmann $H$-Theorem.
\begin{corollary}[Conservation laws]
If $(f,n_{c})$ is a solution of the system \eqref{QuantumBoltzmann}, it formally conserves mass, momentum and energy 
\begin{eqnarray}
\label{Coro:ConservatioMass}
\int_{\mathbb{R}^3}dp\,f(t,p) \ + \  n_c(t)&=&\int_{\mathbb{R}^3}dp\, f_0(p) \ + \  n_c(0)\\
\label{Coro:ConservatioMomentum}
\int_{\mathbb{R}^3}dp\,f(t,p)\,p&=&\int_{\mathbb{R}^3}dp\,f_0(p)\,p\,,\\
\label{Coro:ConservatioEnergy}
\int_{\mathbb{R}^3}dp\,f(t,p)\,|p|&=&\int_{\mathbb{R}^3}dp\,f_0(p)\,|p|\,.
\end{eqnarray}
\end{corollary}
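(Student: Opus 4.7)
The plan is to derive all three identities as direct consequences of the weak formulation \eqref{Lemma:WeakFormulation:Eq1} from Proposition \ref{Lemma:WeakFormulation}, exploiting the structural factor $\bigl[\varphi(p)-\varphi(p_1)-\varphi(p_2)\bigr]$ that appears in the integrand, together with the two Dirac masses $\delta(p-p_1-p_2)$ and $\delta(|p|-|p_1|-|p_2|)$ that restrict the integration to the manifold $p=p_1+p_2$, $|p|=|p_1|+|p_2|$. Accordingly, the natural collision invariants for the system are $\{1,\,p_j,\,|p|\}$, where the role of $|p|$ as the energy invariant reflects the phonon dispersion law \eqref{E6}.

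For \eqref{Coro:ConservatioMass}, no test function is actually needed: by simply adding the two equations in \eqref{BGP}, the right-hand sides cancel, so $\tfrac{d}{dt}\bigl(\int_{\mathbb{R}^3}f\,dp+n_c\bigr)=0$, and integration in time gives the total mass conservation. For \eqref{Coro:ConservatioMomentum}, I take $\varphi(p)=p_j$ componentwise: on the support of $\delta(p-p_1-p_2)$ the bracket $p_j-p_{1,j}-p_{2,j}$ vanishes identically, so $\int_{\mathbb{R}^3}\mathcal{Q}[f]\,p_j\,dp=0$; testing the first equation of \eqref{BGP} against $p_j$ then yields $\tfrac{d}{dt}\int f\,p_j\,dp=0$, the condensate itself contributing nothing since it is at rest. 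For \eqref{Coro:ConservatioEnergy}, take $\varphi(p)=|p|$: now the second Dirac mass forces $|p|-|p_1|-|p_2|=0$ on the support, so the bracket again vanishes, $\int_{\mathbb{R}^3}\mathcal{Q}[f]\,|p|\,dp=0$, and testing against $|p|$ produces $\tfrac{d}{dt}\int f\,|p|\,dp=0$.

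The only genuine obstacle is formal rather than conceptual: the manipulation requires enough integrability of $f$ (controlled moments of orders $0$ and $1$, at least) to legitimately plug the unbounded test functions $p_j$ and $|p|$ into \eqref{Lemma:WeakFormulation:Eq1} and to apply Fubini on the resulting nine-fold integral. Since the corollary is stated as a formal conservation statement, this is deferred to the a priori moment bounds developed in Section 4, which will make the above manipulations rigorous for the solutions constructed in Section 6.
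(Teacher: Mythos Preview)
Your proposal is correct and follows exactly the approach the paper intends: the corollary is stated without a detailed proof in the paper, being presented as an immediate consequence of the weak formulation in Proposition~\ref{Lemma:WeakFormulation}, and your argument spells out precisely that---choosing $\varphi\in\{1,p_j,|p|\}$ so that the factor $[\varphi(p)-\varphi(p_1)-\varphi(p_2)]$ vanishes on the support of the relevant Dirac mass, together with the direct cancellation of the two equations in \eqref{BGP} for total mass.
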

\begin{remark}
Since $f$ is the density related to the thermal cloud only, the mass is not conserved for $f$ but for the total density $f+n_{c}\,\delta(p)$.  Of course, particles enter and leave the condensate at all times.
\end{remark}
\begin{corollary}[H-Theorem] If  $f(t,p)$ solves \eqref{QuantumBoltzmann}, then 
\begin{equation*}
\frac{d}{dt}\int_{\mathbb{R}^3}dp\Big[f(p)\log f(p)-\big(1+f(p)\big)\log\big(1+f(p)\big)\Big]\leq 0.
\end{equation*}
As a consequence, a radially symmetric equilibrium of the equation has the form
\begin{equation}\label{def-equilibrium}
f_{\infty}(p)=\frac{1}{e^{\alpha\,\omega(p)}-1}, \quad \text{for some }\; \alpha>0.
\end{equation}
This distribution is usually referred as a Bose-Einstein distribution.
\end{corollary}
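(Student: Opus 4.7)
The natural test function for the $H$-theorem is $\varphi(p)=\log\tfrac{f(p)}{1+f(p)}$, since this is precisely the derivative with respect to $f$ of the Bose--Einstein entropy density $f\log f-(1+f)\log(1+f)$. Multiplying the kinetic equation by this $\varphi$ and integrating, one gets
\begin{equation*}
\frac{d}{dt}\int_{\mathbb{R}^3}\!\bigl[f\log f-(1+f)\log(1+f)\bigr]dp \;=\;\tfrac{\kappa_{0}}{n_{c}}\!\int_{\mathbb{R}^3}\mathcal{Q}[f](p)\,\log\tfrac{f(p)}{1+f(p)}\,dp,
\end{equation*}
and I would then invoke the weak formulation \eqref{Lemma:WeakFormulation:Eq1} of Proposition~\ref{Lemma:WeakFormulation} to rewrite this as a symmetric triple integral against the kernel $\mathcal{K}\,\delta(p-p_{1}-p_{2})\,\delta(|p|-|p_{1}|-|p_{2}|)$.

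The key algebraic manipulation is the factorization
\begin{equation*}
f_{1}f_{2}-f_{1}f-f_{2}f-f \;=\;(1{+}f)(1{+}f_{1})(1{+}f_{2})\Bigl[\tfrac{f_{1}}{1+f_{1}}\tfrac{f_{2}}{1+f_{2}}-\tfrac{f}{1+f}\Bigr],
\end{equation*}
which pairs precisely with the corresponding log-difference coming from $\varphi$. Setting $A=\tfrac{f_{1}}{1+f_{1}}\tfrac{f_{2}}{1+f_{2}}$ and $B=\tfrac{f}{1+f}$, the integrand reduces to $(1{+}f)(1{+}f_{1})(1{+}f_{2})(A-B)(\log B-\log A)$, which is non-positive by monotonicity of the logarithm. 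Since $\mathcal{K}\geq 0$, $n_{c}>0$, and the Dirac masses are non-negative, integration preserves the sign, yielding $\frac{d}{dt}H(f)\leq 0$.

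For the equilibrium characterization, vanishing of $dH/dt$ forces $(A-B)(\log B-\log A)\equiv 0$ on the support of the collisional measure, i.e.\
\begin{equation*}
g(p)=g(p_{1})+g(p_{2}),\qquad g(p):=\log\tfrac{f(p)}{1+f(p)},
\end{equation*}
whenever $p=p_{1}+p_{2}$ and $|p|=|p_{1}|+|p_{2}|$. Under radial symmetry $g(p)=\tilde g(|p|)$, this collapses to Cauchy's additive equation $\tilde g(r_{1}+r_{2})=\tilde g(r_{1})+\tilde g(r_{2})$ on $\mathbb{R}_{+}$; together with measurability (or continuity of $f$) one concludes $\tilde g(r)=-\alpha r$ for some constant, and the sign must be negative because $f/(1+f)\in(0,1)$ forces $\tilde g<0$, so $\alpha>0$. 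Inverting the relation $\tfrac{f}{1+f}=e^{-\alpha|p|}$ and using $\omega(p)=c|p|$ gives $f_{\infty}(p)=\bigl(e^{(\alpha/c)\omega(p)}-1\bigr)^{-1}$, as claimed in \eqref{def-equilibrium}.

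The main subtlety is not the sign argument but the equilibrium step: the energy-momentum constraint $p=p_{1}+p_{2},\ |p|=|p_{1}|+|p_{2}|$ is degenerate (it forces $p_{1}\parallel p_{2}$ with the same orientation), so the collisional manifold is lower-dimensional than in the classical Boltzmann case. This rules out using standard arguments that exploit full binary-collision invariants (such as momentum components separately); one really has to exploit the radial ansatz to invoke Cauchy's functional equation in one variable, and I would make this reduction explicit rather than appeal to a general collision-invariant theorem.
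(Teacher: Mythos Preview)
Your proposal is correct and follows essentially the same route as the paper: choose $\varphi=\log\frac{f}{1+f}$, plug into the weak formulation, use the factorization $f_{1}f_{2}-f_{1}f-f_{2}f-f=(1{+}f)(1{+}f_{1})(1{+}f_{2})(A-B)$ with $A,B$ as you define them, and conclude by $(A-B)(\log B-\log A)\leq0$; the equilibrium step via Cauchy's equation on the radial variable is likewise the same. Your added remarks on the degeneracy of the collisional manifold and the necessity of the radial ansatz are more explicit than the paper's one-line justification, but the argument is otherwise identical.
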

\begin{remark}
The linearization of the equation \eqref{QuantumBoltzmann} about Bose-Einstein states can be performed by setting
\begin{equation*}
f(t, p)=f_\infty(p)+f_\infty(p)\big(1+f_\infty(p)\big)\Omega (t, p).
\end{equation*}
After plugging into the collision operator and neglecting the nonlinear terms, one has
\begin{equation*}
f_\infty(p)\big(1+f_\infty(p)\big)\frac {\partial \Omega } {\partial t}(t, p)=-M(p)\Omega  (t, p)+\int  _{ \RR^3 }
 dp'\,\mathcal U(p, p') \Omega (t, p'),
\end{equation*}
for some explicit function $M(p)$ and measure $\mathcal U(p, p')$. We refer to \cite{CraciunBinh,EscobedoBinh},  for the study of this equation in this perturbative setting and further discussions on this direction.
\end{remark}
\begin{proof}
We observe that
\begin{align*}
\frac{\text{d}}{\text{d}t}\int_{\mathbb{R}^3}\text{d}p&\Big[f(p)\log f(p) - \big(1+f(p)\big)\log\big(1+f(p)\big)\Big]=\\
&\int_{\mathbb{R}^3}\text{d}p\,\partial_t f(p)\log\bigg(\frac{f(p)}{f(p)+1}\bigg).
\end{align*}
In addition, we can rewrite
\begin{align*}
&\int_{\mathbb{R}^3}\text{d}p\,\mathcal{Q}[f](p)\varphi(p)=\int_{\mathbb{R}^9}\mathcal{K}(|p|,|p_1|,|p_2|)\delta(p-p_1-p_2)\delta(|p|-|p_1|-|p_2|)\\
&\hspace{2cm}\times \big(1+f(p)\big)\big(1+f(p_1)\big)\big(1+f(p_2)\big)\\
&\times \bigg( \frac{f(p_1)}{f(p_1)+1}\frac{f(p_2)}{f(p_2)+1}-\frac{f(p)}{f(p)+1} \bigg)\Big[\varphi(p)-\varphi(p_1)-\varphi(p_2)\Big]\text{d}p\text{d}p_1\text{d}p_2.
\end{align*}
Choosing  $\varphi(p)=\log\left(\frac{f(p)}{f(p)+1}\right)$ we obtain, in the case of equality, that
$$\frac{f(p_1)}{f(p_1)+1}\frac{f(p_2)}{f(p_2)+1}-\frac{f(p)}{f(p)+1}=0,$$
or equivalently, putting   
$h(p)=\log \left(\frac{f(p)}{f(p)+1}\right)$, we get
\begin{equation}\label{id-h}h(p_1)+h(p_2)=h(p).\end{equation}
The fact that $h(\cdot)$ is radially symmetric yields  $h(p) =  - \alpha\,{\omega}(p)$, for all $p \in \mathbb{R}^3$ and some positive constant $\alpha$.  This proves the claim.
\end{proof}
\section{\textit{A priori} estimates on a solution's moments}
The aim of the following sections is to consider radially symmetric solutions of \eqref{QuantumBoltzmann}-\eqref{EE1} that lie in $\mathcal{C}\big([0,\infty);L^{1}(\mathbb{R}^{3},|p|^{k}\text{d}p)\big)$ where
\begin{equation*} 
L^1(\mathbb{R}^{3},|p|^{k}\text{d}p):=\Big\{f\;\text{measurable}\; \big| \;  \int_{\mathbb{R}^{3}} \text{d}p\,|f(p)||p|^k <\infty,\; k\geq0\Big\}.
\end{equation*}
That is, in the sections $4$ and $5$ the \textit{a priori} estimates \textit{assume} the existence of a radially symmetric solution $f(t,\cdot)$ enjoying time continuity in such Lebesgue spaces (thus, time continuity for such solution's moments), for $k$ sufficiently large, say $0\leq k \leq 5$.  Define the solution's moment of order $k$ as 
\begin{equation}\label{Def:MomentsFull}
\mathcal{M}_{k}\langle f \rangle(t) := \int_{\mathbb{R}^3} \text{d}p f(t,p)|p|^{k}\,.
\end{equation}
When $f$ is as radially symmetric function $f(t,p)=f(t,|p|)$, one can use spherical coordinates to reduce the integral with respect to $\text{d}p$ on $\mathbb{R}^3$ to an integral on $\mathbb{R}_+$ with respect to $\text{d}|p|$.  As a consequence,
\begin{equation*}
\mathcal{M}_{k}\langle f \rangle(t) = \big|\mathbb{S}^{2}\big|\int_{\mathbb{R}^+} \text{d}|p| f(t,|p|)|p|^{k+2}\,.
\end{equation*}
Thus, it will be convenient for notation purposes to introduce and work with what we call ``line-moments'' 
\begin{equation}\label{Def:MomentsLine}
m_{k}\langle f \rangle(t) : = \int_0^\infty \text{d}|p|\,f(t,|p|)|p|^{k}\,.
\end{equation}
Observe that $\mathcal{M}_{k}\langle f \rangle = |\mathbb{S}^{2}|m_{k+2}\langle f \rangle$. 

We are going to use the definition of moments in two contexts:  In one hand, in sections 4, 5 and 7 we always consider the moment applied to a given \textit{radial solution of the equation}.  Thus, there is no harm to omit the function dependence and just write $\mathcal{M}_{k}(t)$, $\mathcal{M}_{k}$, $m_{k}(t)$ or $m_{k}$ to denote moments and line-moments for simplicity.  In the other hand, in section 6 we will use moments as norms of the spaces $L^1(\mathbb{R}^{3},|p|^{k}\text{d}p)$, as a consequence, the functional dependence will be important, so we write $m_{k}\langle f \rangle$.  Note that according to the conservation law \eqref{Coro:ConservatioMomentum} and assuming initial energy finite, the following equivalent estimates hold
\begin{equation*}
\mathcal{M}_1(t)=\mathcal{M}_1(0)<\infty\,,\qquad m_3(t)=m_3(0)<\infty\,.
\end{equation*}
Before entering into details, let us explain the necessity of considering radially symmetric solutions of the equation \eqref{QuantumBoltzmann} in the following arguments.  Choosing $\varphi(p)=|p|^{k}$ in the weak formulation Proposition \ref{Lemma:WeakFormulation}, one is lead to estimate terms of the form
\begin{equation*}
\int_{\mathbb{R}^3}\text{d}p_1f(t,p_1)|p_{1}|^i\int_{\mathbb{R}_+}\text{d}|p_2| f(t,|p_2|\widehat{p_1})|p_{2}|^{j}\,,\qquad i,\,j\in\mathbb{N}.
\end{equation*} 
These terms are not estimated by products of moments of $f$ unless the function is radially symmetric.  In such a case this particular term simply writes as a product of line-moments of $f$, namely $|\mathbb{S}^{2}|m_{i+2}\langle f \rangle\,m_{j}\langle f \rangle$.  This technical issue will be central in finding closed \textit{a priori} estimates in terms of line-moments of solutions.
\begin{lemma}\label{control1}
For any suitable function $f\geq0$, for $k\ge 0$, define the quantity
$$\mathcal{J}_k=\int_{\mathbb{R}^{3}} dp\,\mathcal{Q}_{q}[f]|p|^{k},$$
we have:
\begin{itemize}
\item If $k=0$, then
\begin{equation}\label{control1:1}
\mathcal{J}_0\leq C_{k}m_{2}\langle f \rangle\,m_{4}\langle f \rangle.
\end{equation}
\item If $k\geq1$, then 
\begin{equation}\label{control1:2}
\mathcal{J}_k\leq C_{k}\big(m_{k+3}\langle f \rangle\,m_{3}\langle f \rangle + m_{k+1}\langle f \rangle\,m_{5}\langle f \rangle\big).
\end{equation}\end{itemize}

We only prove \eqref{control1:2}, the other inequality \eqref{control1:1} can be proved by the same argument.
The constant $C_{k}>0$ only depends on $k$.  In addition, the linear part simply reads for all $k\ge 0$
\begin{align*}
\int_{\mathbb{R}^{3}} &dp\,\mathcal{L}[f]|p|^{k}= c_{k}\,m_{k+7}\langle f \rangle\,,\text{ with positive constant given by}\\
&\qquad c_{k}=8\pi^{2}\int^{1}_{0}dz\,z^{2}(1-z)^{2}(1-z^{k} - (1-z)^{k})\,.
\end{align*}
\end{lemma}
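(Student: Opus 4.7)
The plan is to apply the radial weak formulation from Proposition \ref{Lemma:WeakFormulation} with test function $\varphi(|p|)=|p|^{k}$, and then exploit the sign structure of the resulting integrand. Write the bracket appearing in \eqref{WeakFormulation:radial} as $B_q + B_\ell$, where $B_q := f(|p_1|)f(|p_2|) - f(|p_1|)f(|p_1|+|p_2|) - f(|p_2|)f(|p_1|+|p_2|)$ produces $\mathcal{J}_k$ and $B_\ell := -f(|p_1|+|p_2|)$ produces $\int \mathcal{L}[f]|p|^k dp$. Denote the test function difference by $\Delta_k := (|p_1|+|p_2|)^{k} - |p_1|^{k} - |p_2|^{k}$.

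For the linear part, I change variables via $|p|=|p_1|+|p_2|$ and $z = |p_1|/|p|$ with Jacobian $|p|$. Under this substitution $K(|p_1|,|p_2|)=z^{2}(1-z)^{2}|p|^{6}$ and $\Delta_k = |p|^{k}(1-z^{k}-(1-z)^{k})$, so the $z$-integral decouples from the $|p|$-integral and yields the stated closed form with $c_k$ the explicit $z$-integral against $z^2(1-z)^2$.

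For the quadratic part with $k \ge 1$, the key observation is that $\Delta_k \ge 0$, while the two cross-terms in $B_q$ are non-positive (since $K, f \ge 0$). Dropping those two non-positive contributions yields the upper bound
\begin{equation*}
\mathcal{J}_k \le 8\pi^{2}\int_0^\infty\int_0^\infty d|p_1|\,d|p_2|\,K(|p_1|,|p_2|)\,f(|p_1|)f(|p_2|)\,\Delta_k.
\end{equation*}
Combining the elementary inequality $\Delta_k \le C_k\bigl(|p_1|^{k-1}|p_2| + |p_1||p_2|^{k-1}\bigr)$ with the bound $(|p_1|+|p_2|)^{2} \le 2(|p_1|^{2}+|p_2|^{2})$ applied inside $K=|p_1|^{2}|p_2|^{2}(|p_1|+|p_2|)^{2}$, the integrand expands into a sum of tensor-product monomials $|p_1|^{a}|p_2|^{b}f(|p_1|)f(|p_2|)$; each such double integral factors as a product of two line-moments. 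A short bookkeeping shows every resulting monomial assembles into either $m_{k+3}\langle f\rangle\,m_{3}\langle f\rangle$ or $m_{k+1}\langle f\rangle\,m_{5}\langle f\rangle$, proving \eqref{control1:2}. For $k=0$ the sign $\Delta_0=-1$ reverses the roles: now the cross-terms contribute positively and the $f(|p_1|)f(|p_2|)$ contribution becomes negative, so
\begin{equation*}
\mathcal{J}_0 \le 8\pi^{2}\int_0^\infty\int_0^\infty d|p_1|\,d|p_2|\,K(|p_1|,|p_2|)\bigl[f(|p_1|) + f(|p_2|)\bigr] f(|p_1|+|p_2|).
\end{equation*}
For the first summand, changing variables to $(|p_1|, |p|)$ with $|p| = |p_1|+|p_2|$ and using $K(|p_1|,|p|-|p_1|)\le |p_1|^{2}|p|^{4}$ splits the integral as $m_2\langle f\rangle m_4\langle f\rangle$; the second summand is symmetric, proving \eqref{control1:1}.

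The main obstacle here is not analytical but combinatorial: one must track the four monomials $|p_1|^{k+3}|p_2|^3,\ |p_1|^{k+1}|p_2|^5,\ |p_1|^5|p_2|^{k+1},\ |p_1|^3|p_2|^{k+3}$ produced by the two expansions and verify that they consolidate into only the two stated line-moment products. Beyond this, no subtle analytical difficulty is expected: the positivity of $f$ and $K$ allows termwise majorization throughout, and the binomial-type inequality for $\Delta_k$ is elementary.
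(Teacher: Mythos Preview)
Your proposal is correct and follows essentially the same route as the paper: apply the radial weak formulation with $\varphi(|p|)=|p|^{k}$, drop the nonpositive contributions using the sign of $\Delta_k$, then combine the binomial bound $\Delta_k\le C_k(|p_1|^{k-1}|p_2|+|p_1||p_2|^{k-1})$ with $K(|p_1|,|p_2|)\le 2|p_1|^{2}|p_2|^{2}(|p_1|^{2}+|p_2|^{2})$ to factor into line-moments; for the linear part the same change of variables $|p|=|p_1|+|p_2|$, $z=|p_1|/|p|$ is used. Your treatment of $k=0$ (where $\Delta_0=-1$ flips the roles and the bound $K(|p_1|,|p|-|p_1|)\le |p_1|^{2}|p|^{4}$ yields $m_2 m_4$) spells out what the paper leaves as ``by the same argument''.
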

\begin{proof}
Using the weak formulation \eqref{WeakFormulation:radial}, the pointwise inequality
\begin{equation*}
 0 \leq (x+y)^{k} - x^{k} - y^{k} \leq C_{k}\big(y\,x^{k-1} + y^{k-1}x\big),\,\text{ valid for any }\, k\geq1\,,
 \end{equation*}
and neglecting all the negative contributions, one concludes that
\begin{align*}
&\int_{\mathbb{R}^{3}}\text{d}p\,\mathcal{Q}_{q}[f]\,|p|^{k}\\
\leq C_{k}&\int^{\infty}_0\int^{\infty}_0 \text{d}|p_{1}|\text{d}|p_{2}|K(|p_{1}|,|p_{2}|)f(|p_{1}|)f(|p_{2}|)\big(|p_{2}|\,|p_{1}|^{k-1} + |p_{2}|^{k-1}|p_{1}|\big)\\
& = 2\,C_{k}\int^{\infty}_0\int^{\infty}_0 \text{d}|p_{1}|\text{d}|p_{2}|K(|p_{1}|,|p_{2}|)f(|p_{1}|)f(|p_{2}|)|p_{2}|\,|p_{1}|^{k-1}\\
&\hspace{1cm} = 4\,C_{k}\,\big(m_{k+3}\langle f\rangle\, m_{3}\langle f\rangle + m_{k+1}\langle f\rangle\, m_{5}\langle f\rangle\big)\,.
\end{align*}
In the last inequality we used that $K(|p_{1}|,|p_{2}|)\leq 2|p_{1}|^{2}|p_{2}|^{2}(|p_{1}|^{2} + |p_{2}|^{2})$.

Regarding the linear part, it follows from a direct computation that
\begin{align*}
&\frac{1}{8\pi^{2}}\int_{\mathbb{R}^{3}}\text{d}p\,\mathcal{L}[f]\,|p|^{k}\\
=\int^{\infty}_0&\int^{\infty}_0 \text{d}|p_{1}|\text{d}|p_{2}|K(|p_{1}|,|p_{2}|)f(|p_{1}|+|p_{2}|)\Big(\big( |p_{1}|+|p_{2}| \big)^{k} - |p_{1}|^{k} - |p_{2}|^{k}\Big)\\
=\int^{\infty}_0&\text{d}|p|\,f(|p|)|p|^{k+6}\int^{|p|}_0\text{d}|p_{1}|\big(\tfrac{|p_{1}|}{|p|}\big)^{2}\big(1-\tfrac{|p_{1}|}{|p|}\big)^{2}\Big(1 - \big(\tfrac{|p_{1}|}{|p|}\big)^{k} - \big(1-\tfrac{|p_{1}|}{|p|}\big)^{k}\Big)\,.
\end{align*}
The result follows after the change of variables $z=|p_{1}|/|p|$ in the inner integral.  
\end{proof}
\begin{theorem}[Propagation of polynomial moments]\label{Theorem:PropaPolynomial}
Let $(f,n_{c}) \geq 0$ be a solution to the problem \eqref{BGP} with finite energy and initial $k^{th}$ moment $m_{k}\langle f_0\rangle<\infty$, for fixed $k>3$.  Then, there exists a constant ${C}_k>0$ that depends only on $k$ such that
\begin{equation}\label{thm4.1.2}
\sup_{t\in[0,T]}m_{k}\langle f\rangle(t)\leq\max\big\{m_{k}\langle f_0 \rangle,C_{k}\,m^{\frac{k+1}{4}}_{3}\big\}\,.
\end{equation}
Here $T>0$ is any time such that $n_{c}(t)>0$ for $t\in[0,T]$.
\end{theorem}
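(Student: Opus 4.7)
The plan is to derive a scalar differential inequality for $m_k(t)$ of the dissipative form $m_k'(t)\le n_c^{-1}(A-B\,m_k^{\alpha})$ with $\alpha>1$ and constants $A,B$ depending only on the conserved energy $m_3$. The pointwise bound then follows from an elementary ODE comparison argument, since any such $m_k$ cannot exceed the maximum of its initial value and the equilibrium threshold $(A/B)^{1/\alpha}$.

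First I would differentiate $m_k(t)$ along the system. Since $f$ is radial, $\mathcal{M}_{k-2}\langle f\rangle=|\mathbb{S}^{2}|\,m_k\langle f\rangle$, and testing the equation $\partial_t f=\tfrac{\kappa_0}{n_c}\mathcal Q[f]$ against $|p|^{k-2}$ gives
$$|\mathbb{S}^{2}|\,\frac{d}{dt}m_k(t)=\frac{\kappa_0}{n_c}\Big(\mathcal J_{k-2}+\int_{\mathbb{R}^{3}}\mathcal L[f]\,|p|^{k-2}dp\Big).$$
Because $k-2>1$, Lemma 4.1 supplies $\mathcal J_{k-2}\le C_{k-2}(m_{k+1}m_3+m_{k-1}m_5)$, while the linear contribution, computed directly from the weak formulation of Proposition 2.1 with $\varphi=|p|^{k-2}$ (the $-f(|p_1|+|p_2|)$ factor combining with the nonnegative Povzner bracket $(|p_1|+|p_2|)^{k-2}-|p_1|^{k-2}-|p_2|^{k-2}$), is the strictly dissipative term $-c_{k-2}\,m_{k+5}$.

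Next I would absorb the quadratic gain by the linear dissipation through a two-step interpolation. Hölder applied between $m_3$ and $m_{k+5}$ (using $k+1=\tfrac{4}{k+2}\cdot 3+\tfrac{k-2}{k+2}(k+5)$ and its analogues for $m_{k-1}$ and $m_5$) bounds both gain products by $C\,m_3^{(k+6)/(k+2)}\,m_{k+5}^{(k-2)/(k+2)}$. Young's inequality with conjugate exponents $\tfrac{k+2}{k-2}$ and $\tfrac{k+2}{4}$ converts this into $\epsilon\, m_{k+5}+C_\epsilon\,m_3^{(k+6)/4}$; choosing $\epsilon$ so that $2C_{k-2}\epsilon\le c_{k-2}/2$ absorbs the gain into half of the dissipation, leaving
$$\frac{d}{dt}m_k\le\frac{1}{n_c}\Big(\tilde C_k\,m_3^{(k+6)/4}-\tilde c_k\,m_{k+5}\Big).$$
A further Hölder step $m_{k+5}\ge m_k^{(k+2)/(k-3)}\,m_3^{-5/(k-3)}$ (from $k=\tfrac{5}{k+2}\cdot 3+\tfrac{k-3}{k+2}(k+5)$) then closes the inequality:
$$\frac{d}{dt}m_k\le\frac{1}{n_c}\Big(A-B\,m_k^{(k+2)/(k-3)}\Big),\qquad A\propto m_3^{(k+6)/4},\;B\propto m_3^{-5/(k-3)}.$$

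Since $n_c(t)>0$ on $[0,T]$ and the exponent $(k+2)/(k-3)$ strictly exceeds $1$, $m_k(t)$ decreases as soon as it rises above $(A/B)^{(k-3)/(k+2)}$, yielding $m_k(t)\le\max\{m_k(0),(A/B)^{(k-3)/(k+2)}\}$. The algebraic simplification $(k-3)(k+6)+20=(k+1)(k+2)$ collapses $(A/B)^{(k-3)/(k+2)}$ to $C_k\,m_3^{(k+1)/4}$, which is exactly the claimed threshold. The main delicate point will be the bookkeeping of exponents in the two interpolation/Young steps---the absorption needs $(k-2)/(k+2)<1$ (automatic for $k>2$) and the closing step needs $(k+2)/(k-3)>1$, which is precisely the hypothesis $k>3$; the cleanness of the final exponent $(k+1)/4$ hinges entirely on that last algebraic identity. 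A secondary technical matter is that $m_{k+5}$ is not a priori finite, handled in the standard kinetic-theory way by truncating $f$, deriving the estimate for the truncation, and passing to the limit using the uniform bound obtained.
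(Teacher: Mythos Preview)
Your proposal is correct and follows essentially the same route as the paper: apply Lemma~4.1 to the quadratic and linear parts, interpolate the gain terms $m_{k+1}m_3+m_{k-1}m_5$ between $m_3$ and the dissipative moment $m_{k+5}$, absorb via Young, then close with the reverse interpolation $m_{k+5}\ge m_3^{-5/(k-3)}m_k^{(k+2)/(k-3)}$ and an ODE comparison. The only cosmetic difference is that the paper indexes the computation by testing against $|p|^{k}$ and tracking $m_{k+2}$ (with $k>1$), whereas you test against $|p|^{k-2}$ and track $m_k$ directly; the algebra and the final exponent $(k+1)/4$ coincide.
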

\begin{proof}
Use the weak formulation for $f$ with $\varphi(|p|)=|p|^{k}$, $k>1$.  Then, using Lemma \ref{control1}
\begin{equation*}
\frac{\text{d}}{\text{d}t}m_{k+2}(t)\leq \tfrac{\kappa_0}{n_{c}(t)}\Big(C_{k}\big(m_{k+3}(t)m_{3}(t) + m_{k+1}(t)m_{5}(t)\big) - c_{k}m_{k+7}(t)\Big)\,.
\end{equation*}
Using the interpolations
\begin{equation*}
m_{k+3}\leq m^{\frac{4}{k+4}}_{3}\,m^{\frac{k}{k+4}}_{k+7}\,,\quad m_{k+1}\leq m^{\frac{6}{k+4}}_{3}m^{\frac{k-2}{k+4}}_{k+7}\,,\quad\text{and}\quad m_{5}\leq m^{\frac{k+2}{k+4}}_{3}m^{\frac{2}{k+4}}_{k+7}\,,
\end{equation*}
one concludes that (we drop the time dependence for simplicity)
\begin{equation*}
\frac{\text{d}}{\text{d}t}m_{k+2}\leq \tfrac{\kappa_{0}}{n_{c}(t)}\Big(C_{k}m^{\frac{k+8}{k+4}}_{3}m^{\frac{k}{k+4}}_{k+7} - c_{k}m_{k+7}\Big) \leq \tfrac{\kappa_{0}}{n_{c}(t)}\Big(C'_{k}m^{\frac{k+8}{4}}_{3} - \tfrac{c_{k}}{2}m_{k+7}\Big)\,.
\end{equation*}
Now, interpolating again
\begin{equation*}
m_{k+7}\geq m^{-\frac{5}{k-1}}_{3}m^{\frac{k+4}{k-1}}_{k+2}
\end{equation*}
and simplifying, one finally concludes that
\begin{equation}\label{e-ineq-moments}
\frac{\text{d}}{\text{d}t}m_{k+2}\leq \tfrac{\kappa_0}{n_{c}(t)}m^{-\frac{5}{k-1}}_{3}\Big(\tilde{C}_{k}\,m^{\frac{(k+4)(k+3)}{4(k-1)}}_{3} - \tilde{c}_{k}\,m^{\frac{k+4}{k-1}}_{k+2}\Big)\,,
\end{equation}
for some positive constants $\tilde{C}_{k}$ and $\tilde{c}_{k}$ depending only on $k>1$.  The result follows directly from \eqref{e-ineq-moments} after observing that
\begin{equation*}
Y(t):=\max\big\{m_{k+2}(0),\big(\tilde{C}_{k}/\tilde{c}_{k}\big)^{\frac{k-1}{k+4}}m^{\frac{k+3}{4}}_{3}\big\}\,,
\end{equation*}
is a super-solution of \eqref{e-ineq-moments}, thus, $Y(t)\geq m_{k+2}(t)$. 
\end{proof}
\section{$L^{\infty}$-estimate and BEC stability}
In this section we find natural conditions on the initial condition for global existence of solutions.  Although global solutions are not expected to exists for arbitrary $(f_0,n_{0})$, we essentially prove that if $n_{0}>0$ is sufficiently large relatively to the amount of quasi-particles near zero temperature, the BEC will remain formed.
\begin{lemma}\label{quadratic-linear-infinity}
For any suitable $f\geq0$, the quadratic operator can be estimated as
\begin{equation*}
\mathcal{Q}_{q}[f](|p|)\leq 2\,m_{3}\,|p|\,\big\|f(|\cdot|)|\cdot|^{2}\big\|_{L^\infty} - 4\,m_{3}\,|p|\,\big(f(|p|)|p|^{2}\big)\,.
\end{equation*}
In addition, the linear operator satisfies
\begin{equation*}
\mathcal{L}[f](|p|)\leq 2\,m_{4}\,|p|^{2} - c_{0}\,|p|^{5}\,\big(f(|p|)|p|^{2}\big)\,,\quad c_{0}:=\int^{1}_{0}z^{2}(1-z)^{2}\text{d}z\,.
\end{equation*} 
\end{lemma}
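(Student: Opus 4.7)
My plan is to work directly from the strong formulation in Corollary \ref{Lemma:StrongFormulation}, which displays $\mathcal{L}$ as three explicit radial integrals and $\mathcal{Q}_q$ as nine, all against the symmetric kernel $K(|p_1|,|p_2|) = |p_1|^2|p_2|^2(|p_1|+|p_2|)^2$. The recurring tool will be the pointwise inequality $|p_k|^2 f(|p_k|) \leq \|f(|\cdot|)|\cdot|^2\|_{L^\infty}$, so I abbreviate $g(|p|):=f(|p|)|p|^2$ and $\|g\|_\infty$ for the sup-norm.

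For $\mathcal{L}$ the two signs are handled separately. The positive part simplifies via the algebraic identity $K(|p|,|p_1|-|p|)=K(|p_1|-|p|,|p|)=|p|^2(|p_1|-|p|)^2|p_1|^2$ to a multiple of $|p|^2\int_{|p|}^\infty(|p_1|-|p|)^2|p_1|^2 f(|p_1|)\,d|p_1|$; bounding $(|p_1|-|p|)^2 \leq |p_1|^2$ and extending the integration to $(0,\infty)$ yields the claimed $m_4|p|^2$ bound. The dissipative part is computed exactly via the change of variable $z=|p_1|/|p|$, which reduces the inner integral to $c_0|p|^5$, producing the $-c_0|p|^5(f(|p|)|p|^2)$ term.

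For $\mathcal{Q}_q$ I partition the nine terms into those without and with an explicit $f(|p|)$ prefactor. For the three gain-type quadratic terms (no $f(|p|)$), the key observation is that on the integration domain at least one daughter momentum is $\geq |p|/2$: this is automatic for the two integrals on $[|p|,\infty)$ (where the free variable exceeds $|p|$), and it is obtained by splitting the interval at $|p|/2$ for the integral on $[0,|p|]$. On the large variable I use $|p_k|^2 \leq 2|p_k|^3/|p|$, on the small one the sup-norm bound $|p_k|^2 f(|p_k|)\leq\|g\|_\infty$; the remaining integral of $|p_k|^3 f(|p_k|)$ is controlled by $m_3$, producing the desired upper bound of the form $|p|\,m_3\,\|g\|_\infty$.

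The $f(|p|)$-multiplied terms require an exact algebraic cancellation. Expanding $(|p_1|-|p|)^2|p_1|^2 = |p_1|^4 - 2|p||p_1|^3 + |p|^2|p_1|^2$ from terms (4)+(5) and $(|p|+|p_1|)^2|p_1|^2 = |p_1|^4 + 2|p||p_1|^3 + |p|^2|p_1|^2$ from terms (8)+(9), and using $m_k^{(>|p|)} - m_k = -m_k^{(\leq|p|)}$, the sum (4)+(5)+(8)+(9) telescopes to $-2f(|p|)|p|^2\bigl[m_4^{(\leq|p|)} + 2|p|\,m_3^{(\leq|p|)} + 4|p|\,m_3^{(>|p|)} + |p|^2 m_2^{(\leq|p|)}\bigr]$; retaining only the $2|p|\,m_3$ contribution yields the $-4|p|\,m_3(f(|p|)|p|^2)$ piece, and the remaining negative terms (6)+(7) are discarded since they are nonpositive. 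The main obstacle is precisely this cancellation: the positive terms (4)+(5) alone admit only the crude bound $f(|p|)|p|^2 m_4$, which does not fit the $|p|\,m_3$ structure claimed, and only the binomial matching with (8)+(9) wipes out the $m_4$ contributions exactly, leaving the clean $m_3$-weighted dissipation.
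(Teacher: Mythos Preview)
Your treatment of $\mathcal{L}$ and of the $f(|p|)$--prefactored part of $\mathcal{Q}_q$ is correct and essentially coincides with the paper's argument (the paper groups the six loss-type bilinear terms as $(B_4+B_5+B_6)+(B_7+B_8+B_9)$ and uses the identity $K(|p|,|p_1|-|p|)-K(|p|,|p_1|)=-4|p|^3|p_1|^3$ directly, but your binomial expansion arrives at the same $-4\,m_3\,|p|\,g(|p|)$).

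The gap is in the three gain terms. Your midpoint split together with the inequality $|p_k|^2\le 2|p_k|^3/|p|$ loses a factor of $2$: it yields $B_1\le 4\,|p|\,\|g\|_\infty\int_{|p|/2}^{|p|}u^3f(u)\,du$, and combined with $B_2+B_3$ you obtain at best $4\,m_3\,|p|\,\|g\|_\infty$, not the stated $2\,m_3\,|p|\,\|g\|_\infty$. This is not cosmetic: the ratio $2$ to $4$ between the gain and dissipation constants is precisely what produces the coefficient $\tfrac12\sup_s\|g(s,\cdot)\|_{L^\infty}$ in the proof of Proposition~\ref{estimate-infinity}; with ratio $1$ that absorption fails and the $L^\infty$ bound cannot be closed.

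The paper obtains the sharp constant by avoiding any inequality in $B_1$: it factors $|p|^2=|p|\bigl(|p_1|+(|p|-|p_1|)\bigr)$ exactly, so that
\[
K(|p_1|,|p|-|p_1|)=|p|\,|p_1|^3(|p|-|p_1|)^2+|p|\,|p_1|^2(|p|-|p_1|)^3,
\]
and for $B_2,B_3$ uses only $|p|\le|p_1|$ on the integration domain, giving $K(|p|,|p_1|-|p|)\le |p|\,|p_1|^3(|p_1|-|p|)^2$. After a change of variables the three gain integrals merge into the single expression
\[
B_1+B_2+B_3\;\le\;2\,|p|\int_0^\infty |p_1|^3 f(|p_1|)\,\bigl||p|-|p_1|\bigr|^{2} f\!\bigl(\bigl||p|-|p_1|\bigr|\bigr)\,d|p_1|\;\le\;2\,m_3\,|p|\,\|g\|_\infty,
\]
with no constant lost. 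Replace your midpoint argument by this factorization and the rest of your plan goes through.
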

\begin{proof}
Recall the strong formulation of $\mathcal{Q}_{q}[f]$ given in Corollary \ref{Lemma:StrongFormulation}
\begin{align}\label{Qnc-bilinear-terms}
&\mathcal{Q}_{q}[f](|p|)=\int^{|p|}_{0}\text{d}|p_{1}| K(|p_{1}|,|p|-|p_{1}|)f(|p_{1}|)f(|p|-|p_{1}|)\nonumber\\
&+ \int^{\infty}_{|p|}\text{d}|p_{1}| \Big(K(|p|,|p_{1}|-|p|)+K(|p_{1}|-|p|,|p|)\Big)f(|p_{1}|)f(|p_{1}|-|p|)\nonumber\\
&+ f(|p|)\bigg(\int^{\infty}_{|p|}\text{d}|p_{1}| K(|p|,|p_{1}|-|p|)f(|p_{1}|) - \int^{\infty}_{0}\text{d}|p_{1}| K(|p|,|p_{1}|)f(|p_{1}|)\nonumber\\
&\qquad\qquad - \int^{|p|}_{0}\text{d}|p_{1}|K(|p|-|p_{1}|,|p_{1}|)f(|p_{1}|)\bigg)\nonumber\\
&+ f(|p|)\bigg(\int^{\infty}_{|p|}\text{d}|p_{1}| K(|p_{1}|-|p|,|p|)f(|p_{1}|) - \int^{\infty}_{0}\text{d}|p_{1}| K(|p_{1}|,|p|)f(|p_{1}|)\nonumber\\
&\qquad - \int^{|p|}_{0}\text{d}|p_{1}| K(|p_{1}|,|p|-|p_{1}|)f(|p_{1}|)\bigg)=:\sum_{i=1}^9  B_i[f](|p|)\,.
\end{align}
For the first term $B_{1}[f](|p|)$ use
\begin{align*}
K(|p_{1}|,|p|-|p_{1}|)&=|p_{1}|^{2}|p|^{2}(|p|-|p_{1}|)^{2} = |p||p_{1}|^{2}(|p| - |p_{1}|)^{2}(|p|- |p_{1}| + |p_{1}|)\\
& =|p||p_{1}|^{3}(|p|-|p_{1}|)^{2} +  |p| |p_{1}|^{2}(|p|-|p_{1}|)^{3}\,.
\end{align*}
For the second term $B_{2}[f](|p|)$, use that in the set $\{|p_{1}|\geq |p|\}$
\begin{align*} 
K(|p|,|p_{1}|-|p|)=|p|^{2}|p_{1}|^{2}(|p_{1}|-|p|)^{2} \leq |p||p_{1}|^{3}(|p_{1}|-|p|)^{2}\,,
\end{align*}
and with an identical estimate for $B_{3}[f](|p|)$.  We obtain, after a change of variables, that
\begin{align*}
B_{1}[f](|p|) + &B_{2}[f](|p|) + B_{3}[f](|p|)\\
&\leq 2|p|\int^{\infty}_{0}|p_{1}|^{3}f(|p_{1}|)\big| |p| - |p_{1}| \big|^{2}f\big(\big| |p|-|p_{1}| \big| \big)\text{d}|p_{1}|\\
&\hspace{2cm}\leq2\,|p|\,\big\|f(\cdot)|\cdot|^{2}\big\|_{L^{\infty}}\,m_{3}\,.
\end{align*}
Now, the sum of the terms $4^{th}, 5^{th}$ and $6^{th}$ can be rewritten as
\begin{align*}
B_{4}[f]&(|p|) + B_{5}[f](|p|) + B_{6}[f](|p|) = \\
&f(|p|)\bigg(\int^{\infty}_{|p|}\big(K(|p|,|p_{1}|-|p|) - K(|p|,|p_{1}|)\big)f(|p_{1}|)\text{d}|p_{1}|\\
&\qquad\qquad - \int^{|p|}_{0}\big(K(|p|,|p_{1}|) + K(|p|-|p_{1}|,|p_{1}|)\big)f(|p_{1}|)\text{d}|p_{1}|\bigg)\,.
\end{align*}
Note that an explicit calculation gives
\begin{align*}
K(|p|,|p_{1}|-|p|) - & K(|p|,|p_{1}|) = -4|p|^{3}|p_{1}|^{3}\,.
\end{align*}
Also, in the set $\{|p_{1}|\leq |p|\}$ it follows
\begin{align*}
&K(|p|,|p_{1}|)+K(|p|-|p_{1}|,|p_{1}|)=2|p|^{2}|p_{1}|^{2}\big(|p|^{2} + |p_{1}|^{2}\big)\geq 2|p|^{3}|p_{1}|^{3}\,.
\end{align*}
Therefore, this sum can be estimated as
\begin{align*}
&B_{4}[f](|p|) + B_{5}[f](|p|) + B_{6}[f](|p|)\\
& \leq -2\,|p|^{3}\,f(|p|)\bigg(2\int^{\infty}_{|p|}|p_{1}|^{3}f(|p_{1}|)\text{d}|p_{1}| + \int^{|p|}_{0}|p_{1}|^{3}f(|p_{1}|)\text{d}|p_{1}|\bigg)\\
& \leq -2\,|p|^{3}\,f(|p|)\,m_{3}\,.
\end{align*}
Now, by symmetry $K(|p|,|p_{1}|)=K(|p_{1}|,|p|)$, one has the identity $B_{4}[f](|p|) + B_{5}[f](|p|) + B_{6}[f](|p|)=B_{7}[f](|p|) + B_{8}[f](|p|) + B_{9}[f](|p|)$, and consequently
\begin{equation*}
\mathcal{Q}_{q}[f](|p|)\leq 2\,m_{3}\,|p|\,\big\|f(|\cdot|)|\cdot|^{2}\big\|_{L^{\infty}} - 4\,m_{3}\,|p|\,\big(f(|p|)|p|^{2}\big)\,.
\end{equation*}
Now, the strong formulation of the linear operator reads
\begin{align}\label{Qnc-linear-terms}
\begin{split}
&\mathcal{L}[f](|p|) = L_1[f](|p|) + L_2[f](|p|) + L_3[f](|p|):=\\
&\qquad\int^{\infty}_{|p|}\big(K(|p|,|p_{1}|-|p|)+K(|p_{1}|-|p|,|p|)\big)f(|p_{1}|)\text{d}|p_{1}| \\
&\qquad\qquad - f(|p|)\int^{|p|}_{0}K(|p_{1}|,|p|-|p_{1}|)\text{d}|p_{1}|\,.
\end{split}
\end{align}
Note that $K(|p|,|p_{1}|-|p|)=|p|^{2}|p_{1}|^{2}(|p_{1}|-|p|)^{2}\leq |p|^{2}|p_{1}|^{4}$ in the set $\{|p|\leq|p_{1}|\}$, thus,
\begin{equation*}
L_1[f](|p|) + L_2[f](|p|) \leq 2\,|p|^{2}\,m_{4}\,.
\end{equation*}
Finally, an elementary calculation gives for the current kernel $K(|p|,|p_{1}|)=|p|^{2}(|p|+|p_{1}|)^{2}|p_{1}|^{2}$
\begin{align*}
L_{3}[f](|p|) &= f(|p|)\int^{|p|}_{0}K(|p_{1}|,|p|-|p_{1}|)\text{d}|p_{1}|\\
& = \int^{1}_{0}z^{2}(1-z)^{2}\text{d}z\,f(|p|)\,|p|^{7}=:c_{0}\,f(|p|)\,|p|^{7}\,.
\end{align*}
\end{proof}
\begin{proposition}[$L^{\infty}$-estimate]\label{estimate-infinity}
Let $(f,n_{c})\geq0$ be a solution of \eqref{BGP} with finite energy and $4^{th}$ moment.   Also, assume that $n_{c}(\cdot)$ is absolutely continuous and that $\|f_{0}(|\cdot|)|\cdot|^{2}\|_{L^{\infty}}<\infty$.  Then,
\begin{equation*}
\sup_{0\leq s \leq T}\big\|f(s,\cdot)|\cdot|^{2}\big\|_{L^{\infty}}\leq \max\bigg\{\big\|f_0(\cdot)|\cdot|^{2}\big\|_{L^{\infty}}, \frac{3\sup_{0\leq s \leq T}m_{4}(s)}{2\,c^{1/4}_{0}m^{3/4}_{3}}\bigg\}\,.
\end{equation*}
Here $T>0$ is any time such that $n_{c}(t)>\delta$ for $t\in[0,T]$ and for some fixed constant $\delta>0$.
\end{proposition}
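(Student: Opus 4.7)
The plan is to analyze the evolution of the quantity $g(t,p):=f(t,p)|p|^{2}$ via a pointwise differential inequality that I will close by a maximum principle in the momentum variable. Multiplying the kinetic equation in \eqref{BGP} by $|p|^{2}$ and inserting the two bounds of Lemma~\ref{quadratic-linear-infinity} yields
\begin{equation*}
\partial_{t} g(t,p) \;\le\; \frac{\kappa_{0}}{n_{c}(t)}\Big(2m_{3}\,|p|^{3}\,\|g(t,\cdot)\|_{L^{\infty}}+2m_{4}(t)\,|p|^{4}-\big(4m_{3}|p|^{3}+c_{0}|p|^{7}\big)\,g(t,p)\Big),
\end{equation*}
valid for every $p\in\mathbb{R}^{3}$ and every $t\in[0,T]$ with $n_{c}(t)>\delta$. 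Here $m_{3}$ is constant in time by \eqref{Coro:ConservatioEnergy} and $m_{4}$ remains bounded on $[0,T]$ by Theorem~\ref{Theorem:PropaPolynomial}.

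Let $M(t):=\|g(t,\cdot)\|_{L^{\infty}}$, and let $|p^{*}(t)|$ denote a radius at which $g(t,\cdot)$ (approximately) attains its supremum. Substituting $g(t,p^{*})=M(t)$ into the displayed inequality,
\begin{equation*}
\dot{M}(t)\;\le\;\frac{\kappa_{0}}{n_{c}(t)}\Big(2m_{4}(t)\,|p^{*}|^{4}-\big(2m_{3}|p^{*}|^{3}+c_{0}|p^{*}|^{7}\big)M(t)\Big),
\end{equation*}
so $\dot{M}(t)\le 0$ whenever $M(t)\ge \tfrac{2m_{4}(t)\,|p^{*}|}{2m_{3}+c_{0}|p^{*}|^{4}}$. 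I then bound the supremum over $r=|p^{*}|>0$ of this ratio via the four-term AM--GM splitting $2m_{3}+c_{0}r^{4}=3\cdot\tfrac{2m_{3}}{3}+c_{0}r^{4}\ge 4\,r\,c_{0}^{1/4}(2m_{3}/3)^{3/4}$, which gives
\begin{equation*}
\sup_{r>0}\frac{2m_{4}(t)\,r}{2m_{3}+c_{0}r^{4}}\;\le\;\frac{(3/2)^{3/4}\,m_{4}(t)}{2\,c_{0}^{1/4}\,m_{3}^{3/4}}\;\le\;\frac{3\,m_{4}(t)}{2\,c_{0}^{1/4}\,m_{3}^{3/4}}.
\end{equation*}

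To conclude, define $C_{T}:=\max\big\{\|f_{0}|\cdot|^{2}\|_{L^{\infty}},\,\tfrac{3\sup_{s\in[0,T]}m_{4}(s)}{2c_{0}^{1/4}m_{3}^{3/4}}\big\}$. Since $M(0)\le C_{T}$ and $M(\cdot)$ is continuous on $[0,T]$ by the time-continuity of $f$ in the weighted Lebesgue spaces, if $M$ ever strictly exceeded $C_{T}$ it would first have to cross this level; but at any such crossing time the inequality above forces $\dot{M}\le 0$, ruling out the crossing. Hence $M(t)\le C_{T}$ throughout $[0,T]$. The main obstacle is the rigorous justification of the \emph{evaluation at the maximum} step, because the supremum in $p$ need not be attained and $M(t)$ is only Lipschitz a priori. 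Standard workarounds are to pass to a maximizing sequence $|p_{n}^{*}|$ and take limits in the pointwise inequality, or to replace $\|\cdot\|_{L^{\infty}}$ by $L^{q}$ norms and send $q\to\infty$; either route renders the technicality routine, so that the substantive content of the proof lies in the AM--GM bound on the threshold together with the dissipative structure supplied by Lemma~\ref{quadratic-linear-infinity}.
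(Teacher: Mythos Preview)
Your approach is heuristically sound and reaches the same conclusion, but it differs from the paper's in two substantive ways and contains one minor slip.

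\textbf{The minor slip.} In your displayed differential inequality the powers of $|p|$ are off by a factor $|p|^{2}$: Lemma~\ref{quadratic-linear-infinity} already bounds the strong-form operator that includes the polar Jacobian (see Corollary~\ref{Lemma:StrongFormulation}), so the correct inequality is
\[
\partial_{t} g \;\le\; \tfrac{\kappa_{0}}{n_{c}(t)}\Big(2m_{3}|p|\,\|g\|_{L^{\infty}}+2m_{4}|p|^{2}-(4m_{3}|p|+c_{0}|p|^{5})g\Big).
\]
Fortunately this is cosmetic: the extra $|p|^{2}$ cancels in the threshold ratio, so your AM--GM bound is unaffected.

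\textbf{Comparison with the paper.} The paper does not argue via a maximum principle. Instead it (i) performs the time rescaling $\alpha'(t)=1/n_{c}(\alpha(t))$ to remove the prefactor $\kappa_{0}/n_{c}$, and then (ii) integrates the resulting linear differential inequality in $t$ by Duhamel's formula, obtaining
\[
g(t,|p|)\le g(0,|p|)e^{-|p|(4m_{3}+c_{0}|p|^{4})t}+2|p|\int_{0}^{t}e^{-|p|(4m_{3}+c_{0}|p|^{4})(t-s)}\big(m_{3}\|g(s)\|_{\infty}+m_{4}(s)|p|\big)\,ds.
\]
Taking the supremum in $|p|$ and then in $t$ yields a self-referential inequality of the form $X\le\max\{\|g(0)\|_{\infty},\tfrac{1}{2}X+B\}$ with $X=\sup_{s}\|g(s)\|_{\infty}$, which is solved algebraically. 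The advantage of this route is that it is fully rigorous as written: no supremum-attainment issues, no differentiability of $M(t)$, and no continuity of $t\mapsto\|g(t)\|_{L^{\infty}}$ need be invoked (the latter, incidentally, does \emph{not} follow from time-continuity in $L^{1}$ as you assert). Your time-rescaling-free treatment works because $\kappa_{0}/n_{c}(t)>0$ does not spoil the sign of the right-hand side, which is a nice observation; but the evaluation-at-the-maximum step you flag is precisely what the paper's Duhamel argument is designed to avoid. The workarounds you mention (maximising sequences, $L^{q}\to L^{\infty}$) are viable but would each require a page of care; the Duhamel route sidesteps them in two lines.
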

\begin{proof}
The weak formulation leads to the strong representation
\begin{equation*}
\partial_{t}f(t,|p|)|p|^{2} = \tfrac{\kappa_0}{n_{c}(t)}\Big(\mathcal{Q}_{q}[f(t)](|p|) + \mathcal{L}[f(t)](|p|)\Big)\,,\; t\geq0\,,\, |p|\geq0\,.
\end{equation*}
Since $n_{c}(\cdot)>0$ is absolutely continuous in $[0,T]$, it is possible to solve uniquely the nonlinear ode
\begin{equation}
\alpha'(t) = \frac{1}{n_{c}(\alpha(t))}\,,\qquad \alpha(0)=0\,,
\end{equation}
in the region $0\leq\alpha(t)\leq T$.  The function $\alpha$ is strictly increasing. 

Observe that
$$\int_{\mathbb{R}^3}dp\,f(t,p) \ + \  n_c(t)=\int_{\mathbb{R}^3}dp\, f_0(p) \ + \  n_c(0)=C(f_0,n_c(0));$$
hence, $n_c$ is uniformly bounded in time by $C(f_0,n_c(0))$, then 
$$ \frac{1}{n_{c}(\alpha(t))}\ge \frac{1}{C(f_0,n_c(0))}>0.$$
Thus the function $\alpha$ is strictly increasing and $\lim_{t\to\infty}\alpha(t)=\infty$.  Let $\tilde{T}$ be the unique time such that $\alpha(\tilde{T})=T$ and define the time scaled function
\begin{equation*}
F(t,|p|): = f\big(\alpha(t),|p|\big)\,,\quad t\in[0,\tilde{T}]\,.
\end{equation*}

It follows that
\begin{equation*}
\partial_{t}F(t,|p|)|p|^{2} = \kappa_0\Big(\mathcal{Q}_{q}[F(t)](|p|) + \mathcal{L}[F(t)](|p|)\Big)\,,\quad |p|\geq0\,,
\end{equation*}
valid in the interval $t\in[0,\tilde{T}]$.
Clearly, $m_{3}\langle F(t)\rangle =m_{3}\langle F(0) \rangle = m_{3}\langle f_0 \rangle=:m_{3}$.  Define for simplicity $g(t,|p|):=F(t,|p|)|p|^{2}$ and use the weak formulation and Lemma \ref{quadratic-linear-infinity} to obtain
\begin{align*}
\partial_{t}g(t,&|p|)\leq 2\, m_{3}\, |p|\,\|g(t,|p|)\|_{\infty}  \\
&- 4\,m_{3}\,|p|\,g(t,|p|) + 2\,m_{4}\,|p|^{2} - c_{0}\,|p|^{5}\,g(t,|p|)\,.
\end{align*}
Integrating this differential inequality,
\begin{align*}
g(t,&|p|)\leq g(0,|p|)e^{-|p|(4m_{3}+c_0|p|^{4})t}\\
&\quad+2|p|\int^{t}_{0}e^{-|p|(4m_{3}+c_{0}|p|^{4})(t-s)}\Big(m_{3}\big\|g(s,\cdot)\big\|_{L^{\infty}}+m_{4}(s)|p|\Big)\text{d}s\\
&\leq \max\bigg\{\|g(0,|p|)\|_{\infty}\,,\frac{2\,m_{3}\sup_{s}\big\|g(s,\cdot)\big\|_{L^{\infty}}+2\sup_{s}m_{4}(s)\,|p|}{4m_{3}+c_{0}\, |p|^{4}}\bigg\}\\
&\leq \max\bigg\{\|g(0,|p|)\|_{\infty}\,, \tfrac{1}{2}\sup_{s}\big\|g(s,\cdot)\big\|_{L^{\infty}} + \Big\|\tfrac{2|\cdot|}{4m_{3}+c_{0}\, |\cdot|^{4}}\Big\|_{\infty}\sup_{s}m_{4}(s)\bigg\}\,.
\end{align*}
All supremum are taken in $s\in[0,\tilde{T}]$.  Since
\begin{equation*}
\frac{2|p|}{4m_{3}+c_{0}\, |p|^{4}}\leq \frac{3^{3/4}}{2^{5/2}c^{1/4}_{0}m^{3/4}_{3}}\,,
\end{equation*}
it follows, after taking supremum in $|p|\geq0$ and then in $t\geq0$, that
\begin{equation*}
\sup_{s}\big\|g(s,\cdot)\big\|_{L^{\infty}} \leq \max\bigg\{\big\|g(0,\cdot)\big\|_{L^{\infty}}, \frac{3^{3/4}\sup_{s}m_{4}(s)}{2^{3/2}\,c^{1/4}_{0}m^{3/4}_{3}}\bigg\}\,.
\end{equation*}
The result follows since
\begin{align*}
\sup_{s\in[0,\tilde{T}]}\big\|g(s,\cdot)\big\|_{L^{\infty}} &= \sup_{s\in[0,T]}\big\|f(s,\cdot)|\cdot|^{2}\big\|_{L^{\infty}}\,,\quad \text{and}\\
\sup_{s\in[0,\tilde{T}]}m_{4}\langle F(s) \rangle &= \sup_{s\in[0,T]}m_{4}\langle f(s) \rangle\,.
\end{align*}
\end{proof}
Observe that for any $\varepsilon>0$,
\begin{align}\label{controlm2}
\begin{split}
m_{2}\langle f \rangle=\int^{\infty}_{0}&\text{d}|p|f(|p|)|p|^{2} = \int^{\varepsilon}_{0}\text{d}|p|f(|p|)|p|^{2} +\int^{\infty}_{\varepsilon}\text{d}|p|f(|p|)|p|^{2}\\
&\leq \varepsilon\big\|f(\cdot)|\cdot|^{2}\big\|_{L^{\infty}} + \tfrac{1}{\varepsilon}m_{3} \leq 2\sqrt{m_{3}\,\big\|f(|\cdot|)|\cdot|^{2}\big\|_{L^{\infty}}}\,,
\end{split}
\end{align}
where the last inequality follows after minimization over $\varepsilon>0$.  This simple observation proves most of the following theorem.
\begin{theorem}[BEC stability]\label{BECstab}
Let $(f,n_{c})\geq0$ be a solution of \eqref{BGP} with finite energy and $4^{th}$ moment.   Also, assume that $n_{c}(\cdot)>0$ is absolutely continuous and that $\|f_{0}(|\cdot|)|\cdot|^{2}\|_{L^{\infty}}<\infty$.  Then, there exists a threshold $\mathcal{C}(f_0)>0$, that can be taken as in \eqref{th}, such that for any initial BEC having mass
\begin{equation}\label{ncthres}
n_{c}(0) \geq \mathcal{C}(f_0) - m_{2}(0)+\delta\,,\qquad \delta>0\,,
\end{equation}
then, the BEC remains uniformly formed, $$\inf_{0\leq s \leq T} n_{c}(s)\geq \delta\,.$$  Here $T>0$ is any time were the aforementioned assumptions hold.
\end{theorem}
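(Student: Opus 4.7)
The starting point is the total mass conservation from Corollary 3.1: $\int_{\mathbb{R}^{3}} f(t,p)\,\text{d}p + n_{c}(t) = \int_{\mathbb{R}^{3}} f_{0}(p)\,\text{d}p + n_{c}(0)$. Rewritten in terms of line-moments for radial $f$, this reads $n_{c}(t) = n_{c}(0) + |\mathbb{S}^{2}|\bigl(m_{2}(0) - m_{2}(t)\bigr)$. Consequently, $n_{c}(t) \geq \delta$ is equivalent to
\begin{equation*}
|\mathbb{S}^{2}|\,m_{2}(t) \;\leq\; n_{c}(0) + |\mathbb{S}^{2}|\,m_{2}(0) - \delta.
\end{equation*}
Thus the plan reduces to producing a uniform-in-$T$ upper bound on $m_{2}(t)$ depending only on the initial data, and choosing the threshold $\mathcal{C}(f_{0})$ just large enough that the standing hypothesis on $n_{c}(0)$ forces this inequality.

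The uniform control on $m_{2}$ will come from interpolation \eqref{controlm2}, $m_{2}(t) \leq 2\sqrt{m_{3}\,\|f(t,\cdot)|\cdot|^{2}\|_{L^{\infty}}}$, combined with the two preceding results. First, Theorem~4.1 applied with $k=4$ yields $\sup_{s\in[0,T]} m_{4}(s) \leq \max\{m_{4}(0),\,C_{4}\,m_{3}^{5/4}\}$, a bound that depends only on $f_{0}$ since $m_{3}$ is a conserved quantity. Feeding this into Proposition~5.2 produces a uniform estimate $\sup_{s\in[0,T]}\|f(s,\cdot)|\cdot|^{2}\|_{L^{\infty}} \leq \mathcal{A}(f_{0})$, again depending only on the initial data. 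Plugging these two bounds into the interpolation gives $\sup_{s\in[0,T]} m_{2}(s) \leq 2\sqrt{m_{3}\,\mathcal{A}(f_{0})}$, and this quantity (times $|\mathbb{S}^{2}|$) is precisely the explicit value one should assign to the threshold $\mathcal{C}(f_{0})$ appearing in \eqref{ncthres}.

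The main obstacle is that Theorem~4.1 and Proposition~5.2 are both \emph{conditional}: they hold only on intervals where $n_{c}(s) > 0$, respectively $n_{c}(s) > \delta$, so the chain of implications looks circular. I would close the argument by a continuation/bootstrap. Define $T^{\ast} := \sup\{\,t \in [0,T] : n_{c}(s) > \delta \text{ for all } s \in [0,t]\,\}$; absolute continuity of $n_{c}$ together with the strict threshold hypothesis $n_{c}(0) \geq \mathcal{C}(f_{0}) - m_{2}(0) + \delta$ give $T^{\ast} > 0$. On $[0,T^{\ast})$ all the a priori estimates above are in force, so $m_{2}(s) \leq \mathcal{C}(f_{0})/|\mathbb{S}^{2}|$, and the conservation identity then forces $n_{c}(s) \geq \delta$ on $[0,T^{\ast})$, extending by continuity to $s = T^{\ast}$. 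If $T^{\ast} < T$ one re-runs the same estimates on a slightly longer interval, contradicting maximality; hence $T^{\ast} = T$ and $\inf_{s\in[0,T]} n_{c}(s) \geq \delta$, as claimed.
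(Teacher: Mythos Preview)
Your proof follows the same route as the paper's: mass conservation reduces the question to a uniform bound on $m_2(t)$, the interpolation \eqref{controlm2} reduces that to a bound on $\|f(t,\cdot)|\cdot|^2\|_{L^\infty}$, and Proposition~\ref{estimate-infinity} combined with Theorem~\ref{Theorem:PropaPolynomial} (with $k=4$) close the estimate purely in terms of $f_0$. The explicit threshold you arrive at is exactly the $\mathcal{C}(f_0)$ of \eqref{th}.

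The one place you add content is the continuation argument in your last paragraph, and it is in fact unnecessary. The theorem already assumes $n_c(\cdot)>0$ and continuous on the compact interval $[0,T]$, so automatically $n_c\geq\delta'$ there for some $\delta'>0$; this is all Proposition~\ref{estimate-infinity} and Theorem~\ref{Theorem:PropaPolynomial} require. The key point, which resolves the apparent circularity you flag, is that the bounds they produce do \emph{not} depend on $\delta'$: inspect the right-hand side in Proposition~\ref{estimate-infinity} and in \eqref{thm4.1.2}. Hence the chain $m_4\to\|f|\cdot|^2\|_\infty\to m_2$ yields a number depending only on $f_0$, and no bootstrap is needed. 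Your bootstrap is a valid alternative, but as written it has a small wrinkle: at $s=T^\ast$ you obtain only $n_c(T^\ast)\geq\delta$, not strict, so the ``re-run on a slightly longer interval'' step needs a standard tweak (e.g.\ run the continuation with $\delta/2$ in the definition of $T^\ast$). Finally, the paper silently drops the $|\mathbb{S}^2|$ factor in the mass identity; your version is the more careful one on that point.
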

\begin{proof}
For a solution $(f(t,\cdot),n_{c}(t))\geq 0$ of \eqref{BGP} with continuous moments and with $n_{c}(t)>0$ in $[0,T]$, the pair $(f(t), n_{c}(t))$ enjoys the total conservation of mass $m_{2}(t) + n_{c}(t)=m_{2}(0) + n_{c}(0)$ in such interval.  Then, using \eqref{controlm2}
\begin{align*}
n_{c}(t)&=n_{c}(0)+m_{2}(0) - m_{2}(t)\geq n_{c}(0)+m_{2}(0) - \sup_{0\leq s \leq T}m_{2}(s)\\
&\geq n_{c}(0)+m_{2}(0) - 2\sqrt{m_{3}\,\sup_{0\leq s \leq T}\big\|f(s,|\cdot|)|\cdot|^{2}\big\|_{L^{\infty}}}\,.
\end{align*}
Moreover, using Proposition \ref{estimate-infinity} and Theorem \ref{Theorem:PropaPolynomial},
\begin{align}
&2\sqrt{m_{3}\,\sup_{0\leq s \leq T}\big\|f(s,|\cdot|)|\cdot|^{2}\big\|_{L^{\infty}}}\nonumber\\
&\quad\leq 2\sqrt{m_{3}\max\bigg\{\big\|f_0(\cdot)|\cdot|^{2}\big\|_{L^{\infty}}, \frac{3\sup_{0\leq s\leq T}m_{4}(s)}{2\,c^{1/4}_{0}m^{3/4}_{3}}\bigg\}}\nonumber\\
&\leq 2\sqrt{m_{3}\max\bigg\{\big\|f_0(\cdot)|\cdot|^{2}\big\|_{L^{\infty}}, \frac{3\max\big\{m_{4}\langle f_0 \rangle,C\,m^{\frac{5}{4}}_{3}\big\}}{2\,c^{1/4}_{0}m^{3/4}_{3}}\bigg\}} =:\mathcal{C}(f_0)\,.\label{th}
\end{align}
Thus, fixing $\delta>0$, if
\begin{equation*}
n_{c}(0)\geq\mathcal{C}(f_0) - m_{2}(0)+\delta\,,
\end{equation*}
we have $\inf_{0\leq s \leq T}n_{c}(s)\geq\delta$ which concludes the proof.
\end{proof}

\section{The Cauchy Problem}
This section is devoted to show existence and uniqueness of positive solutions of the initial value problem \eqref{BGP} with quantum interaction operator $\mathcal{Q}[f]$ defined in \eqref{GainLossSplit}, \eqref{GainOperator} and \eqref{LossOperator}, associated to a transition probability $|\mathcal M|^2=\kappa|p||p_1||p_2|$ valid in the low temperature regime.

The first observation is that the system \eqref{BGP} can be reduced to a single equation after explicit integration of $n_{c}(t)$.  Indeed,
\begin{equation}\label{nc}
n_{c}[f](t):=n_{c}(t)=\sqrt{n^{2}_0 - 2\kappa_0\int^{t}_{0}\text{d}s\int_{\mathbb{R}^{3}}\text{d}p\,\mathcal{Q}[f](s,p)}\,.
\end{equation}
As a consequence, system \eqref{BGP} is equivalent to the single equation
\begin{equation}\label{EBGP}
\frac{\text{d}f}{\text{d}t} = \tfrac{\kappa_0}{n_{c}[f]}\mathcal{Q}[f]\,,\quad t>0\,,
\end{equation}
complemented with the initial condition $f(0,\cdot)=f_{0}(\cdot)$.  This equivalence is valid as long as $n_{c}(\cdot)>0$.  Note that equation \eqref{EBGP} is an nonlinear equation with memory.

The approach we follow here is based on an abstract ODE framework in Banach spaces.  The following theorem, proved in the Appendix \ref{Appendix}, is valid for causal operators.  Fix spaces $\mathcal{S}$ and $E$, time $T>0$, and causal operator 
\begin{equation}
\mathcal{O}:\mathcal{C}\big([0,T];\mathcal{S}\big)\rightarrow \mathcal{C}\big([0,T];E\big).
\end{equation}  We recall that an operator $\mathcal{O}$ is causal, if for any $t\in[0,T]$ the operator at time $t$ is defined only by the values of $f$ in $[0,t]$, that is, $\mathcal{O}[f](t) = \mathcal{O}[f(\cdot)\textbf{1}_{\{\cdot\leq t\}}](t)$.

\begin{theorem}\label{Theorem_ODE} Let $E:=(E,\|\cdot\|)$ be a Banach space, $\mathcal{S}$  be a bounded, convex and closed subset of $E$, and $\mathcal{O}:\mathcal{C}\big([0,T];\mathcal{S}\big)\rightarrow \mathcal{C}\big([0,T];E\big)$ be a causal operator  satisfying the following properties:
\begin{itemize}
\item [$\cdot$] H\"{o}lder continuity condition: For any functions $f,g\in\mathcal{C}\big([0,T];\mathcal{S}\big)$ and times $0 \leq t \leq s\in[0,T]$, there is $\beta\in(0,1)$ such that
\begin{align}\label{Holder_C}
\begin{split}
\big\|&\mathcal{O}[f](t) - \mathcal{O}[g](s)\big\|\\
&\leq C\big(\sup_{\sigma\in[0,t]}\big\|f(\sigma) - g(\sigma)\big\|^{\beta} + \big\|f(t) - g(s)\big\|^{\beta} + |t-s|^{\beta}\big)\,,
\end{split}
\end{align}
\item [$\cdot$] sub-tangent condition: For any $f\in\mathcal{C}\big([0,T];\mathcal{S}\big)$
\begin{equation}\label{subtangent}
\liminf_{h\rightarrow0^+}\,h^{-1}\sup_{t\in[0,T]}\text{dist}\big(f(t)+h\,\mathcal{O}[f](t),\,\mathcal{S}\big)=0\,,
\end{equation}
\item [$\cdot$] and, one-sided Lipschitz condition:  For any $f,g\in\mathcal{C}\big([0,T];\mathcal{S}\big)$ and $t\in[0,T]$
\begin{equation}\label{one-sided-lip}
\int^{t}_{0}ds\big[ \mathcal{O}[f](s) - \mathcal{O}[g](s), f(s) - g(s) \big] \leq L\int^{t}_{0}ds\big\|f(s) - g(s)\big\|\,,
\end{equation}
where $\big[ \varphi,\phi \big]: = \lim_{h\rightarrow 0^{-}}h^{-1}\big(\| \phi + h\varphi \| - \| \phi \| \big)$.
\end{itemize}
Then, the equation 
\begin{equation}\label{Theorem_ODE_Eq}
\partial_t f=\mathcal{O}[f] \mbox{ on } [0,T)\times E,~~~~f(0)=f_0 \in \mathcal{S}
\end{equation}
has a unique solution in $\mathcal{C}^1\big([0,T);E\big)\cap \mathcal{C}\big([0,T);\mathcal{S}\big)$.
\end{theorem}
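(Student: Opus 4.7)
The plan is to construct a solution by a time-discretization scheme whose admissibility is guaranteed by the sub-tangent condition, extract a limit using the H\"older continuity to get equicontinuity, and obtain uniqueness through the one-sided Lipschitz condition via a Gronwall argument.

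First, I would build approximate solutions by an Euler-type scheme. Fix a partition $0=t_0<t_1<\dots<t_N=T$ with step size $h=T/N$, set $f_N(0)=f_0\in\mathcal{S}$, and define $f_N$ on $[t_k,t_{k+1}]$ by $f_N(t)=f_N(t_k)+(t-t_k)\mathcal{O}[f_N](t_k)+e_k(t)$, where $e_k(t)$ is a small correction chosen so that $f_N(t_{k+1})\in\mathcal{S}$. The causality of $\mathcal{O}$ is essential here: $\mathcal{O}[f_N](t_k)$ depends only on values of $f_N$ on $[0,t_k]$ already constructed, so the scheme is well-defined. The sub-tangent condition \eqref{subtangent} applied to the piecewise-linear interpolant guarantees $\mathrm{dist}(f_N(t_k)+h\,\mathcal{O}[f_N](t_k),\mathcal{S})=o(h)$, so one can choose $e_k$ with $\|e_k(t_{k+1})\|=o(h)$, and the cumulative error stays $o(1)$ as $N\to\infty$.

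Second, I would pass to the limit. Since $\mathcal{S}$ is bounded and $\mathcal{O}$ is bounded on $\mathcal{C}([0,T];\mathcal{S})$ (by taking $g$ to be a fixed reference function in the H\"older bound \eqref{Holder_C}), the family $\{f_N\}$ is uniformly bounded and equicontinuous, with a uniform H\"older exponent coming from \eqref{Holder_C}. By a suitable Arzel\`a--Ascoli-type argument (working in the appropriate topology on $E$, or restricting to a separable closed subspace containing the iterates), a subsequence converges uniformly to some $f\in\mathcal{C}([0,T];\mathcal{S})$. Applying the H\"older condition once more gives $\mathcal{O}[f_N]\to \mathcal{O}[f]$ uniformly, and passing to the limit in
\begin{equation*}
f_N(t)=f_0+\int_0^t \mathcal{O}[f_N](s)\,\mathrm{d}s + o_N(1)
\end{equation*}
shows that $f$ satisfies the integral form of \eqref{Theorem_ODE_Eq}. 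Differentiability in $t$ with values in $E$ then follows from continuity of $\mathcal{O}[f]$.

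For uniqueness, let $f,g$ both solve \eqref{Theorem_ODE_Eq} with the same datum $f_0$. Using the standard identity
\begin{equation*}
\frac{\mathrm{d}^{-}}{\mathrm{d}t}\|f(t)-g(t)\|\le \big[\mathcal{O}[f](t)-\mathcal{O}[g](t),\,f(t)-g(t)\big],
\end{equation*}
valid for differentiable curves in $E$, integrating and invoking \eqref{one-sided-lip} yields
\begin{equation*}
\|f(t)-g(t)\|\le L\int_0^t \|f(s)-g(s)\|\,\mathrm{d}s,
\end{equation*}
so Gronwall forces $f\equiv g$.

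The main obstacle is the first step: carrying out flow-invariance in a general Banach space under a causal dependence on the whole history of $f$. Classical results (Martin, Pavel, Deimling) handle instantaneous vector fields $\mathcal{O}=\mathcal{O}(t,f(t))$; here one must either work with a piecewise-linear extension $\hat f_N$ on $[0,t_k]$ and apply \eqref{subtangent} to $\hat f_N$, or verify that the H\"older condition \eqref{Holder_C} makes the functional $\mathcal{O}[f](t)$ sufficiently close to a function of $f(t)$ alone over short time intervals so that the classical sub-tangent construction still produces admissible steps. Compactness in the limit passage, in the absence of a finite-dimensional setting, is the secondary difficulty, handled through the uniform H\"older bound from \eqref{Holder_C}.
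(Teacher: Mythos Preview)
Your three-step outline (Euler-type scheme, pass to the limit, Gronwall for uniqueness) is close in spirit to the paper's argument, but Step 2 contains a real gap. In an infinite-dimensional Banach space, Arzel\`a--Ascoli requires not only equicontinuity and uniform boundedness but also that $\{f_N(t)\}$ be relatively compact in $E$ for each $t$. Nothing in the hypotheses of the theorem gives this: $\mathcal{S}$ is merely bounded, closed and convex, and the H\"older bound \eqref{Holder_C} yields equicontinuity, not pointwise compactness. Restricting to a separable subspace or ``working in the appropriate topology'' does not produce the missing compactness. So as written, you cannot extract a convergent subsequence of $\{f_N\}$.

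The paper circumvents this entirely by using the one-sided Lipschitz condition \eqref{one-sided-lip} not only for uniqueness but already for \emph{existence}: given two $\varepsilon$-approximate solutions $u^\varepsilon$, $w^\varepsilon$ (piecewise affine, built exactly as in your Step 1, with the convexity of $\mathcal{S}$ keeping the whole linear segment inside $\mathcal{S}$), one differentiates $\|u^\varepsilon(t)-w^\varepsilon(t)\|$, invokes \eqref{one-sided-lip} after integration, and obtains $\|u^\varepsilon(t)-w^\varepsilon(t)\|\leq C(T,L)\,\varepsilon$. Thus the family $\{u^\varepsilon\}$ is Cauchy in $\mathcal{C}([0,T];E)$ as $\varepsilon\to 0$, and completeness of $E$ delivers the limit with no compactness needed. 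This is the key idea you are missing: the dissipativity encoded in \eqref{one-sided-lip} replaces compactness in the existence argument, which is precisely why the theorem holds in an arbitrary Banach space.
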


\noindent
This theorem is an extension of Theorem A.1 proved in \cite{Bressan} by Bressan in the context of solving the elastic Boltzmann equation for hard spheres in $3$ dimension. We point out that \cite{Bressan} does not properly show  that \eqref{subtangent} is satisfied in that case. For completeness of this manuscript we rewrite Bressan's unpublished proof in the Appendix.
The Bressan's needed techiques  can be found in \cite{Martin}. Indeed, referring to the argument given in \cite{ABCL:2016}, using conditions \eqref{Holder_C} and  \eqref{subtangent} combined with  \cite[Theorem VI.2.2]{Martin} one has that conditions (C1), (C2) and (C3) in \cite[pg. 229]{Martin} are satisfied and  
hence, together with \eqref{one-sided-lip}, all needed conditions  for the existence and uniqueness theorem  \cite[Theorem VI.4.3]{Martin}  for ODEs in Banach spaces are fulfilled.  

For our  particular case, we need to identify a suitable Banach space and a corresponding  bounded, convex and closed subset $\mathcal{S}$.  Choosing $E=L^{1}\big(\mathbb{R}^{3},\text{d}p\big)$ as Banach space, the choice of the subspace $\mathcal{S}$, defined below in \eqref{SetS}, depends on the \textit{a priori} estimates discussed in previous two sections and the desired continuity properties needed for existence. 
 
\smallskip

More specifically, such subset $\mathcal{S}\subset L^{1}\big(\mathbb{R}^{3},\text{d}p\big) $ is characterized by the H\"{o}lder continuity and sub-tangent conditions \eqref{Holder_C} and \eqref{subtangent}, respectively, (to be shown next in subsection~\ref{Subtangent}), and it is defined as follows:
\begin{align}\label{SetS} 
\begin{split}
\mathcal{S} \ :=\ \bigg\{ f \in L^{1}\big(\mathbb{R}^{3},\text{d}&p\big)\;\big| \; {\bf i.}\ f \text{\;nonnegative \& radially symmetric}\,,\\
&{\bf ii.}\  m_3\langle f\rangle \ = \ \int_{\mathbb{R}_+}d|p|\,f(|p|)|p|^3 \ = \ \mathfrak{h}_3\, ,\\
&{\bf iii.}\ m_{8}\langle f \rangle \ =\ \int_{\mathbb{R}_+}d|p|\,f(|p|)|p|^{8}\ \leq\ \mathfrak{h}_{8}\, , \\
&{\bf iv.}\ \| f(\cdot)\,|\cdot|^{2}\, \|_{\infty} \leq \mathfrak{h}_{\infty}<\infty \bigg\},
\end{split}
\end{align}
where $\mathfrak{h}_3>0$ is an arbitrary initial energy.  The specific $\mathfrak{h}_{8}>0$ is defined below in \eqref{h10_0}, and $\mathfrak{h}_{\infty}>0$ will be taken sufficiently large depending only on $\mathfrak{h}_3$ and $\mathfrak{h}_8$.  We are now in conditions to state and prove the global well-posedness theorem.
\begin{theorem}[Global well-posedness]\label{Theorem:ExistenceUniqueness}
Let $f_0(p)=f_0(|p|) \in \mathcal{S}$ and assume that $(f_0,n_0=n_{c}(0))$ satisfies the threshold condition \eqref{ncthres} for $\delta>0$.  Then, system \eqref{BGP} (equivalently, system \eqref{nc}-\eqref{EBGP}) has a unique conservative solution $(f,n_{c})$ such that
\begin{align}\label{the_theorem}
\begin{split}
0&\leq f(t,p)=f(t,|p|)\in \mathcal{C}\big([0,T]; \mathcal{S}\big)\cap  \mathcal{C}^1\big((0,T];L^1\big(\mathbb{R}^{3},dp\big)\big)\,,\\
\delta&\leq n_{c}(t)=n_{c}[f](t) \in \mathcal{C}\big([0,T]\big)\cap\mathcal{C}^1\big((0,T]\big)\,,
\end{split}
\end{align}
for any $T>0$.  Momentum and energy are conserved for $f(t,\cdot)$, and the total mass of the system is conserved as well $$m_{2}\langle f(t) \rangle + n_{c}[f](t) = m_{2}\langle f_0 \rangle + n_0\,.$$ 
\end{theorem}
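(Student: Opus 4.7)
The plan is to reduce \eqref{BGP} to the single causal equation \eqref{EBGP} via the explicit expression \eqref{nc} for $n_c[f](t)$, and then apply the abstract Theorem~\ref{Theorem_ODE} to the operator
\[
\mathcal{O}[f](t):=\tfrac{\kappa_{0}}{n_{c}[f](t)}\,\mathcal{Q}[f](t),
\]
on $E=L^{1}(\mathbb{R}^{3},dp)$ with the invariant set $\mathcal{S}$ defined in \eqref{SetS}. Causality of $\mathcal{O}$ is immediate from the formula \eqref{nc}, and $\mathcal{O}$ is well defined on $\mathcal{C}([0,T];\mathcal{S})$ provided $n_{c}[f](\cdot)$ stays strictly positive; this is exactly the content of Theorem~\ref{BECstab}: the threshold \eqref{ncthres} combined with conservation of $m_{2}+n_{c}$ and the a~priori $L^{\infty}$-bound of Proposition~\ref{estimate-infinity} delivers $n_{c}[f](t)\geq\delta$ uniformly on $[0,T]$ for every $T>0$.

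First I would fix the constants in \eqref{SetS}. Using Theorem~\ref{Theorem:PropaPolynomial} at the levels $k=4,8$, set
\[
\mathfrak h_{4}:=\max\!\bigl\{m_{4}\langle f_{0}\rangle,\, C_{4}\mathfrak h_{3}^{5/4}\bigr\},\qquad
\mathfrak h_{8}:=\max\!\bigl\{m_{8}\langle f_{0}\rangle,\, C_{8}\mathfrak h_{3}^{9/4}\bigr\},
\]
and then choose $\mathfrak h_{\infty}$ so that $\mathfrak h_{\infty}\geq\|f_{0}(\cdot)|\cdot|^{2}\|_{\infty}$ and $\mathfrak h_{\infty}\geq \tfrac{3\mathfrak h_{4}}{2c_{0}^{1/4}\mathfrak h_{3}^{3/4}}$ from Proposition~\ref{estimate-infinity}. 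With these choices the set $\mathcal{S}$ is closed under the dynamics in an instantaneous sense: energy equality is built into the weak formulation \eqref{WeakFormulation:radial} (using $\varphi(|p|)=|p|$, which is a collisional invariant by \eqref{id-h}), the $m_{8}$-estimate is precisely \eqref{e-ineq-moments} for $k=6$, and the weighted $L^{\infty}$-estimate is precisely Lemma~\ref{quadratic-linear-infinity} integrated along the Duhamel flow in Proposition~\ref{estimate-infinity}. On $\mathcal{S}$ the strong formulation in Corollary~\ref{Lemma:StrongFormulation} shows that the whole collision operator is controlled in $L^{1}$-norm by polynomial products of line-moments that are bounded on $\mathcal{S}$.

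Next I would verify the three conditions of Theorem~\ref{Theorem_ODE}. For the sub-tangent condition \eqref{subtangent}, for $f\in\mathcal{C}([0,T];\mathcal{S})$ and small $h>0$ I write $f(t)+h\mathcal{O}[f](t)=(1-h\tfrac{\kappa_{0}}{n_{c}}\nu[f])f+h\tfrac{\kappa_{0}}{n_{c}}\mathcal{Q}^{+}[f]$ using the gain/loss split \eqref{GainLossSplit}: for $h$ small enough relative to $\|\nu[f]\|_{\infty}$ the prefactor is nonnegative, so the perturbed state is nonnegative and radially symmetric; the equality $m_{3}=\mathfrak h_{3}$ is preserved to first order in $h$ by the collision invariance of $|p|$; the inequality $m_{8}\leq \mathfrak h_{8}$ and the $L^{\infty}$-bound follow from \eqref{e-ineq-moments} and Lemma~\ref{quadratic-linear-infinity}, which show that whenever the corresponding quantity equals the threshold the time-derivative is $\leq 0$, so that the distance to $\mathcal{S}$ in $L^{1}$ is $o(h)$ uniformly in $t$. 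The Hölder continuity \eqref{Holder_C} reduces, thanks to uniform boundedness of $n_{c}[f](t)\geq\delta$ and the Lipschitz dependence of $n_{c}[f]$ on $f$ through \eqref{nc}, to a bilinear $L^{1}$-estimate on $\mathcal{Q}[f]-\mathcal{Q}[g]$; writing $\mathcal{Q}[f]-\mathcal{Q}[g]$ as a sum of terms of the form $\mathcal{Q}^{\pm}[f,f-g]$ and $\mathcal{Q}^{\pm}[f-g,g]$ and using the moment/$L^{\infty}$ bounds of $\mathcal{S}$ together with the polynomial kernel $K$ yields a bound by $C\sup_{\sigma\leq t}\|f-g\|_{L^{1}(\langle p\rangle^{7}dp)}$, which after interpolation between $m_{3}$ and $m_{8}$ is controlled by $\|f-g\|_{L^{1}}^{\beta}$ for some $\beta\in(0,1)$. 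The same kind of expansion, now tested against $\operatorname{sgn}(f-g)$ inside the bracket $[\cdot,\cdot]$, gives the one-sided Lipschitz condition \eqref{one-sided-lip}, the loss terms contributing with a favourable sign.

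With the three hypotheses in place, Theorem~\ref{Theorem_ODE} produces a unique solution $f\in\mathcal{C}^{1}((0,T];L^{1})\cap\mathcal{C}([0,T];\mathcal{S})$ for every $T>0$, and then $n_{c}(t)=n_{c}[f](t)$ defined through \eqref{nc} inherits $\mathcal{C}^{1}$-regularity and satisfies $n_{c}\geq\delta$ by Theorem~\ref{BECstab}; total mass conservation $m_{2}\langle f(t)\rangle+n_{c}(t)=m_{2}\langle f_{0}\rangle+n_{0}$ follows by integrating \eqref{EBGP} in $p$ and using the defining ODE for $n_{c}$ in \eqref{BGP}, while conservation of momentum and of $m_{3}$ follow from the weak formulation with $\varphi=p$ and $\varphi=|p|$. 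The main obstacle is the sub-tangent condition, since three very different constraints (an equality, an integral inequality, and a pointwise weighted $L^{\infty}$-bound) must all be shown to be infinitesimally preserved by the flow in the $L^{1}$-distance; this is precisely where the moment propagation estimate \eqref{e-ineq-moments} and the Duhamel bound in Lemma~\ref{quadratic-linear-infinity}/Proposition~\ref{estimate-infinity} are used in their sharp form.
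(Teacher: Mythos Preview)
Your overall strategy matches the paper's, but the sub-tangent argument contains a genuine gap. You write $f+h\,\mathcal{O}[f]=(1-h\tfrac{\kappa_{0}}{n_{c}}\nu[f])\,f+h\tfrac{\kappa_{0}}{n_{c}}\mathcal{Q}^{+}[f]$ and claim nonnegativity ``for $h$ small enough relative to $\|\nu[f]\|_{\infty}$''. But $\nu[f]$ is \emph{unbounded}: by \eqref{LossOperator3} one has $\nu[f](p)\leq C(\mathcal{S})|p|^{2}(1+|p|^{5})$, and the linear part of the loss alone contributes $c_{0}|p|^{7}$. Hence for every fixed $h>0$ the prefactor becomes negative for large $|p|$, and $f+h\,\mathcal{O}[f]$ fails to be nonnegative. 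The paper repairs this by truncation: with $f_{R}:=f\,\textbf{1}_{\{|p|\leq R\}}$ the candidate element is $w_{R}:=f+h\tfrac{\kappa_{0}}{n_{c}[f]}\mathcal{Q}[f_{R}]$. Since $f_{R}\nu[f_{R}]\leq C(\mathcal{S})R^{2}(1+R^{5})\,f$, positivity of $w_{R}$ holds for $h\leq h_{*}(R)$; the equality $m_{3}\langle w_{R}\rangle=\mathfrak{h}_{3}$ and the bound $m_{8}\langle w_{R}\rangle\leq\mathfrak{h}_{8}$ are checked directly (splitting according to whether $m_{8}\langle f\rangle$ is above or below the root $\mathfrak{h}^{8}_{*}$ of \eqref{Section:ProofTheorem:E1}), and the $L^{\infty}$-bound by a pointwise contradiction on the set $\{w_{R}|p|^{2}>\mathfrak{h}_{\infty}\}$ using Lemma~\ref{quadratic-linear-infinity} applied to $f_{R}$; this last step is also where the paper needs $\mathfrak{h}_{\infty}\geq C(\mathfrak{h}_{3},\mathfrak{h}_{8})$, a constraint absent from your choice of $\mathfrak{h}_{\infty}$. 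Finally the H\"{o}lder estimate gives $m_{2}\langle|w_{R}-(f+h\,\mathcal{O}[f])|\rangle\leq h\epsilon$ for $R$ large, which yields \eqref{subtangent}.

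There is also a logical ordering issue. You invoke Theorem~\ref{BECstab} at the outset to secure $n_{c}[f]\geq\delta$, but that theorem applies only to \emph{solutions} of \eqref{BGP}, whereas the hypotheses of Theorem~\ref{Theorem_ODE} must hold for every $f\in\mathcal{C}([0,T];\mathcal{S})$. The paper first works on a short interval $[0,T_{\delta}]$ with $T_{\delta}=\tfrac{n_{0}^{2}-\delta^{2}}{2\kappa_{0}C(\mathcal{S})}$, on which the crude bound $\big|\int\mathcal{Q}[f]\,dp\big|\leq C(\mathcal{S})$ and formula \eqref{nc} force $n_{c}[f]\geq\delta$ for \emph{all} such $f$; Theorem~\ref{Theorem_ODE} then produces a solution on $[0,T_{\delta}]$, and only afterwards is Theorem~\ref{BECstab} applied (now legitimately) to show $T$ is arbitrary. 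Lastly, in the one-sided Lipschitz step the favourable sign does not come from ``the loss terms'' generically but specifically from the linear operator $\mathcal{L}$ tested against $(1+|p|^{2})\,\text{sign}(f-g)$: this produces a term $-c_{2}\,m_{9}\langle|f-g|\rangle$ that absorbs the $m_{7}\langle|f-g|\rangle$ contributions arising from both $\mathcal{Q}_{q}$ and from the difference $\tfrac{1}{n_{c}[f]}-\tfrac{1}{n_{c}[g]}$; your sketch should make this absorption explicit.
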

\begin{proof}
The proof of this theorem consists of verifying the three conditions \eqref{Holder_C}, \eqref{subtangent}, and \eqref{one-sided-lip} to apply Theorem \ref{Theorem_ODE}, respectively for the nonlinear causal operator $\mathcal{O}[f]=\tfrac{\kappa_0}{n_{c}[f]}\mathcal{Q}[f]$.

In the following estimates we fix a time $T:=T_{\delta}>0$ such that $$\inf_{0\leq s \leq T_{\delta}}n_{c}[f](s)\geq\delta\,.$$  This can be done in the space $\mathcal{C}\big([0,T];\mathcal{S}\big)$ since $$\int_{\mathbb{R}^{3}}\text{d}p\big|\mathcal{Q}[f(t)]\big|\leq C\big(m_{2}\langle f(t) \rangle, m_{7}\langle f(t) \rangle \big) \leq C(\mathfrak{h}_{3}, \mathfrak{h}_{8}, \mathfrak{h}_{\infty})\,.$$  In the sequel, we write $C(\mathcal{S})$ for a constant depending only on the parameters defining the set $\mathcal{S}$, namely $\mathfrak{h}_{3},\, \mathfrak{h}_{8}$, and $\mathfrak{h}_{\infty}$.   Therefore, from the definition of $n_{c}[f]$ it suffices to take $T_{\delta}:=\tfrac{n^{2}_{0} - \delta^{2}}{2\kappa_0C(\mathcal{S})}>0$ to satisfy such lower bound on the condensate mass.  A posteriori, knowing the total conservation of mass, we use Theorem \ref{BECstab} to conclude that $T>0$ is, in fact, arbitrary.   

\subsection{H\"{o}lder Estimate}\label{Section:Lipschitz}

Recall the definition of $m_{k}\big\langle f \rangle$, the $k^{th}$-line-moment of a radially symmetric $f(p):=f(|p|)$
\begin{equation}\label{mkft}
m_{k}\langle f \rangle:=\int_{\mathbb{R}_{+}}\text{d}p\,f(|p|)|p|^{k}\,,\qquad k\geq0\,, 
\end{equation}
and observe that $m_{2}\langle |f| \rangle$ is equivalent to the usual norm for a radially symmetric functions in $L^{1}\big(\mathbb{R}^{3},\text{d}p\big)$.
\begin{lemma}[H\"{o}lder continuity]\label{Lemma:Lipschitz}
The collision operator 
$$\tfrac{\kappa_0}{n_{c}[\cdot]}\mathcal{Q}[\cdot]:\mathcal{C}\big([0,T];\mathcal{S}\big) \rightarrow  \mathcal{C}\big([0,T];L^{1}\big(\mathbb{R}^{3},dp\big)\big)$$
is H\"{o}lder continuous with estimate
\begin{align}\label{Lemma:Lipschitz:Eq}
\begin{split}
&m_{2}\big\langle \Big|\tfrac{\kappa_0}{n_{c}[f](t)}\mathcal{Q}[f(t)] - \tfrac{\kappa_0}{n_{c}[g](s)}\mathcal{Q}[g(s)] \Big| \big\rangle\\
&\leq C_{\delta,T}(\mathcal{S})\Big(\sup_{\sigma\in[0,t]}m_{2}\big\langle |f(\sigma)-g(\sigma)| \big\rangle^{\frac{1}{6}} + \sup_{\sigma\in[0,t]}m_{2}\big\langle |f(\sigma)-g(\sigma)|\big\rangle\Big)\\
&\quad+ C_{\delta}(\mathcal{S})\Big(m_{2}\big\langle |f(t)-g(s)| \big\rangle^{\frac{1}{6}} + m_{2}\big\langle |f(t)-g(s)|\big\rangle + |t-s|\Big)\,,
\end{split}
\end{align}
valid for all $f,\,g\in\mathcal{C}\big([0,T];\mathcal{S}\big)$ and $0\leq t \leq s\in[0,T]$.
\end{lemma}
\begin{proof}
Recall that the interaction operator can be written as a sum of a nonlinear part and a linear part $\mathcal{Q}[f] = \mathcal{Q}_{q}[f] + \mathcal{L}[f]$.  Besides, the nonlinear part is the sum of nine terms $\mathcal{Q}_{q}[f] = \sum^{9}_{i=1} B_{i}[f]$, as in \eqref{Qnc-bilinear-terms}, and the linear part is the sum of three terms $\mathcal{L}[f]=\sum^{3}_{i=1}L_{i}[f]$, as in \eqref{Qnc-linear-terms}.   An elementary calculation shows that the nonlinear terms satisfy for $1\leq i \leq 9$
\begin{align*}
&\int_{\mathbb{R}^{3}}\text{d}p\big|B_{i}[f] -B_{i}[g]\big| \\
&\leq 2\max\big\{m_{2}\langle f \rangle, m_{4}\langle f \rangle,m_{2}\langle g \rangle, m_{4}\langle g \rangle\big\}\Big(m_{2}\langle |f-g| \rangle + m_{4}\langle |f - g|\rangle\Big)\\
&\;\leq 2\max\big\{m_{2}\langle f \rangle, m_{4}\langle f \rangle,m_{2}\langle g \rangle, m_{4}\langle g \rangle\big\}\times\\
&\times\Big(m_{2}\langle |f-g| \rangle + \big(m_{8}\langle f \rangle + m_{8}\langle g \rangle\big)^{1/3}m_{2}^{2/3}\langle |f - g|\rangle\Big)\,.
\end{align*}
As for the linear terms,
\begin{equation*}
\int_{\mathbb{R}^{3}}\text{d}p\big|L_{i}[f] -L_{i}[g]\big| \leq m_{7}\langle |f - g|\rangle \leq \big(m_{8}\langle f \rangle + m_{8}\langle g \rangle\big)^{5/6}m_{2}^{1/6}\langle |f - g|\rangle\,.
\end{equation*}
The conclusion is that
\begin{equation}\label{holderQ}
\int_{\mathbb{R}^{3}}\text{d}p\big|\mathcal{Q}[f] - \mathcal{Q}[g]\big| \leq C(\mathcal{S})\Big( m_{2}\langle |f-g| \rangle + m^{1/6}_{2}\langle |f-g| \rangle \Big)\,.
\end{equation} 
Additionally, for any $0\leq t\leq s\in[0,T]$
\begin{align}
&\Big|\frac{1}{n_{c}[f](t)} - \frac{1}{n_{c}[g](s)}\Big| = \frac{\big|n^{2}_{c}[f](t)-n^{2}_{c}[g](s)\big|}{(n_{c}[f](t) + n_{c}[g](s))\,n_{c}[f](t)\,n_{c}[g](s)}\nonumber\\
&\quad\leq 2\kappa_0\,\frac{\int^{t}_{0}\text{d}\sigma\,m_{2}\langle \big|\mathcal{Q}[f(\sigma)] - \mathcal{Q}[g(\sigma)] \big| \rangle + \int^{s}_{t}\text{d}\sigma\,m_{2}\langle \big|\mathcal{Q}[g(\sigma)] \big| \rangle }{(n_{c}[f](t) + n_{c}[g](s))\,n_{c}[f](t)\,n_{c}[g](s)}\nonumber\\
&\leq\frac{C(\mathcal{S})}{\delta^{3}}\Big(\int^{t}_{0}\text{d}\sigma\,m_{2}\big\langle \big|f(\sigma) - g(\sigma)\big| \big\rangle + m^{1/6}_{2}\big\langle \big|f(\sigma) - g(\sigma)\big| \big\rangle + |t-s|\Big)\,.\label{holdernc}
\end{align}
We used, in the last inequality, the fact that $\min\{n_{c}[f],n_{c}[g]\}\geq\delta$ for any $f,g\in\mathcal{C}\big([0,T];\mathcal{S}\big)$.  The result follows after applying $m_{2}\langle \cdot \rangle$ to
\begin{align*}
\begin{split}
\Big|\tfrac{\kappa_0}{n_{c}[f](t)}&\mathcal{Q}[f(t)] - \tfrac{\kappa_0}{n_{c}[g](s)}\mathcal{Q}[g(s)]\Big| \leq \kappa_0\Big| \tfrac{1}{n_{c}[f](t)}-\tfrac{1}{n_{c}[g](s)}\Big| \mathcal{Q}[f(t)]\\
& + \tfrac{\kappa_0}{n_{c}[g](s)}\Big| \mathcal{Q}[f(t)] - \mathcal{Q}[g(s)]\Big|\,.
\end{split}
\end{align*}
and using \eqref{holderQ} and \eqref{holdernc} to estimate each term in the right side.
\end{proof}
\subsection{Sub-tangent condition}\label{Subtangent}
This condition characterizes the stability of the space $\mathcal{S}$ defined in \eqref{SetS} under the equation's dynamics.  Recall that the collision operator $\mathcal{Q}[\cdot]$ can be split as the sum of a gain and a loss operators, as mentioned earlier in \eqref{GainLossSplit} 
\begin{equation*}
\mathcal{Q}[f]=\mathcal{Q}^{+}[f]-f\,\nu[f]\, ,
\end{equation*}
with (refer to the strong formulation and recall the symmetry of $K(\cdot,\cdot)$)
\begin{align}\label{LossOperator3}
\begin{split}
\nu[f](p)&=2\int^{\infty}_{0}\text{d}|p_1|\,K(|p_{1}|,|p|)\,f(|p_1|) + 2\int^{|p|}_{0}\text{d}|p_{1}|\,K(|p_{1}|,|p| - |p_{1}|)\, f(|p_1|)\\
&\hspace{2cm}+\int^{|p|}_{0}\text{d}|p_{1}|\,K(|p_{1}|,|p| - |p_{1}|)\\
&\leq 4|p|^{4} m_2\langle f \rangle + 4|p|^{2}m_4\langle f \rangle + 4|p|^{7} \leq C(\mathcal{S})|p|^{2}(1+|p|^{5})\,.
\end{split}
\end{align}
The sub-tangent condition \eqref{subtangent} follows as a corollary of next Proposition~\ref{prop_subtan}.
\begin{proposition}\label{prop_subtan}
Fix $f\in\mathcal{C}\big([0,T];\mathcal{S}\big)$.  Then, for any $t>0$ and $\epsilon>0$, there exists $h_{*}:=h_{*}(f,\epsilon)>0$, such that the ball centered at $f(t)+h\tfrac{\kappa_0}{n_{c}[f](t)}\mathcal{Q}[f(t)]$ with radius $h\,\epsilon>0$ intersects $\mathcal{S}$, that is,
$$B\Big(f(t)+h\tfrac{\kappa_0}{n_{c}[f](t)}\mathcal{Q}[f(t)],h\,\epsilon\Big)\cap\mathcal{S},\;  \text{is non-empty for any}\; 0<h<h_{*}.$$ 
\end{proposition} 
\begin{proof}  Set $\chi_R(p)$ the characteristic function of the ball of radius $R>0$ and introduce the truncated function $f_R(t,p):=\textbf{1}_{\{|p|\leq R\}}f(t,p)$, then set $w_R(t,p):=f(t,p)+h\tfrac{\kappa_0}{n_{c}[f](t)}\mathcal{Q}[f_R(t)](p)$.

Since $0 \leq f_{R}(t,p) \leq f(t,p)$, one has that
\begin{equation*}
m_{2}\langle f_{R}(t)\rangle \leq m_{2}\langle f(t)\rangle\,, \quad m_{7}\langle f_{R}(t)\rangle \leq m_{7}\langle f(t)\rangle\,.
\end{equation*}
Then, $\tfrac{\kappa_0}{n_{c}[f](t)}\mathcal{Q}[f_{R}(t)]\in \mathcal{C}\big([0,T],L^{1}\big(\mathbb{R}^{3},\text{d}p\big)$ by Lemma~\ref{Lemma:Lipschitz}.  As a consequence, $w_{R}\in \mathcal{C}\big( [0,T] ; L^{1}(\mathbb{R}^{3},\text{d}p)\big)$.
Note that, since $\mathcal{Q}^{+}$ is a positive operator, for any $f(t)\in\mathcal{S}$
\begin{align}\label{positiveS}
\begin{split}
w_{R}(t) & =   f(t) + h\tfrac{\kappa_0}{n_{c}[f](t)}\Big(\mathcal{Q}^{+}[f_{R}(t)] - f_{R}(t)\, \nu[f_{R}(t)]\Big)\\
&\geq f(t) - h\tfrac{\kappa_0}{n_{c}[f](t)}f_{R}(t)\, \nu[f_{R}(t)] \\
&\geq \ f(t)\Big(1 - h\,\delta^{-1}\,C(\mathcal{S})R^{2}\big(1 + R^{5}\big) \Big) \geq 0
\end{split}
\end{align}
for any $0<h< \delta/C(\mathcal{S})R^{2}\big(1 + R^{5}\big)$.  Moreover, by conservation of energy $ \int_{\mathbb{R}_+}\text{d}|p|\,\mathcal{Q}[f_{R}(t)]|p|^{3} =0$ , yielding
\begin{equation}\label{m3R}
\begin{split}
&m_{3}\langle w_{R}(t)\rangle = \int_{\mathbb{R}_+}\text{d}|p|\,w_{R}(t,|p|)|p|^{3}\\
&= \int_{\mathbb{R}_+}\text{d}|p|\,\Big(f(t,|p|)+h\tfrac{\kappa_0}{n_{c}[f](t)}\mathcal{Q}[f_{R}(t)]\Big)|p|^{3}=\int_{\mathbb{R}_+}\text{d}|p|\,f(t,|p|)|p|^{3} = \mathfrak{h}_{3}\,.
\end{split}\end{equation}
In summary, $w_{R}$ satisfies, properties {\bf i.} and {\bf ii.} in the characterization of the $\mathcal{S}$.
Let us show that $w_R$ also satisfies property {\bf iii.} in the set  $\mathcal{S}$.
First, recall the \textit{a priori} estimate \eqref{e-ineq-moments} for the line-moment inequalities, namely
 \begin{align}\label{Section:ProofTheorem:E1}
\begin{split}
\int_{\mathbb{R}^3}\text{d}p\,&\tfrac{\kappa_0}{n_{c}[f](t)}\mathcal{Q}[f(t)]|p|^k \leq \mathcal{L}_{k}\big(t,m_{k}\langle f(t)\rangle \big)\\
:&= \tfrac{\kappa_0}{n_{c}[f](t)}\,m^{-\frac{5}{k-3}}_{3}\Big(\tilde{C}_{k}\,m^{\frac{(k+2)(k+1)}{4(k-3)}}_{3} - \tilde{c}_{k}\,m_{k}\langle f(t)\rangle^{\frac{k+2}{k-3}}\Big)\\
&= \tfrac{\kappa_0}{n_{c}[f](t)}\,\mathfrak{h}^{-\frac{5}{k-3}}_{3}\Big(\tilde{C}_{k}\,\mathfrak{h}^{\frac{(k+2)(k+1)}{4(k-3)}}_{3} - \tilde{c}_{k}\,m_{k}\langle f(t)\rangle^{\frac{k+2}{k-3}}\Big)\,.
\end{split}
\end{align}
This estimate holds for any $k>3$ and $\tilde{C}_k, \tilde{c}_{k}$ only depending on $k$.  Note that the map $\mathcal{L}_{k}(t,\cdot):[0,\infty)\rightarrow \mathbb{R}$ is decreasing and has only one root $\mathfrak{h}^{k}_{*}:=\frac{\tilde{C}_{k}}{\tilde{c}_{k}}\mathfrak{h}^{(k+1)/4}_{3}$, at which $\mathcal{L}_{k}$ changes from positive to negative for any $k>3$.  Note that this root only depends on $\mathfrak{h}_{3}$ and $k$, in particular, it is time independent.  Thus, it is always the case that for any $f\in\mathcal{C}\big([0,T];\mathcal{S}\big)$
\begin{align*}
\int_{\mathbb{R}^3}\text{d}p\,\tfrac{\kappa_0}{n_{c}[f](t)}\mathcal{Q}[f]|p|^k \leq \mathcal{L}_{k}\big(t,m_{k}\langle f\rangle\big) \leq \mathcal{L}_{k}(t,0)\leq \tfrac{\kappa_0}{\delta}\,\tilde{C}_{k}\,\mathfrak{h}^{\frac{(k+6)}{4}}_{3}\,.
\end{align*}
Fix $k=8$ and define
\begin{equation}\label{h10_0}
\mathfrak{h}_{8}:=  2\mathfrak{h}^{8}_{*}+\tfrac{\kappa_0}{\delta}\,\tilde{C}_{8}\,\mathfrak{h}^{\frac{7}{2}}_{3}\,.
\end{equation}
For any $f\in\mathcal{C}\big([0,T];\mathcal{S}\big)$, we have two sets: $I_{1}=\{t:m_{8}\langle f(t)\rangle\leq2\mathfrak{h}^{8}_{*}\}$ and $I_{2}=\{t:m_{8}\langle f(t)\rangle>2\mathfrak{h}^{8}_{*}\}$.  For the former, it readily follows that
\begin{align*}
m_{8}\langle w_{R}(t)\rangle=\int_{\mathbb{R}^{3}}\text{d}p\,&w_{R}(t,|p|)|p|^{8} = \int_{\mathbb{R}^{3}}\text{d}p\,\Big(f(t)+h\tfrac{\kappa_0}{n_{c}[f](t)}\mathcal{Q}[f_R(t)]\Big)|p|^{8}\\
&\leq  2\mathfrak{h}^{8}_{*} + h\tfrac{\kappa_0}{\delta}\,\tilde{C}_{8}\,\mathfrak{h}^{\frac{7}{2}}_{3} \leq \mathfrak{h}_{8},
 \end{align*} 
where in the last inequality we have assumed $h\leq1$ without loss of generality.

For the latter, we can choose $R=R_{1}(f)$ sufficiently large such that $\inf_{t\in I_{2}} m_{8}\langle f_{R}(t)\rangle\geq\mathfrak{h}^{8}_{*}$,  and therefore,
\begin{equation*}
\int_{\mathbb{R}^3}\text{d}p\,\tfrac{\kappa_0}{n_{c}[f](t)}\mathcal{Q}[f_R(t)]|p|^{8} \leq \mathcal{L}_{8}\big(t,m_{8}\langle f_{R}(t) \rangle\big)\leq 0\,,\qquad t\in I_{2}\,.
\end{equation*}
As a consequence, for any $t\in I_{2}$
\begin{align*}
m_{8}\langle w_{R}(t)\rangle = \int_{\mathbb{R}^{3}}&\text{d}p\,\Big(f(t)+h\tfrac{\kappa_0}{n_{c}[f](t)}\mathcal{Q}[f_R(t)]\Big)|p|^{8}\\
&\leq \int_{\mathbb{R}^{3}}\text{d}p\,f(t)|p|^{8}\leq \mathfrak{h}_{8}\,. 
\end{align*}
The conclusion is that for any $f\in\mathcal{C}\big([0,T];\mathcal{S}\big)$, it is always the case that
\begin{equation}\label{propii.}
m_{8}\langle w_{R}(t) \rangle \leq \mathfrak{h}_{8}\,,\quad\text{as long as}\quad R\geq R_{1}(f)>0\,,
\end{equation}
which ensures that $w_{R}$ satisfies property {\bf iii.} of the set $\mathcal{S}$ in \eqref{SetS}.  Let us prove now that $w_{R}$ satisfies property {\bf iv.}  To this end, consider the sets
\begin{align*}
O&=\big\{(t,p) : f(t,p)\,|p|^{2} \geq 0.9\,\mathfrak{h}_{\infty}\big\}\,,\\
O_{R}&=\big\{(t,p) : f_{R}(t,p)\,|p|^{2} \geq 0.9\,\mathfrak{h}_{\infty}\big\}\,.
\end{align*}
In addition, consider the set $W=\big\{(t,p) : w_{R}(t,p)\,|p|^{2} > \mathfrak{h}_{\infty}\big\}$.  Assume that $W$ is of positive measure.  Then,
\begin{align*}
\mathfrak{h}_{\infty} < w_{R}(t,p)&\,|p|^{2} = f(t,p)\,|p|^{2} + h\tfrac{\kappa_0}{n_{c}[f](t)}\mathcal{Q}[f_{R}(t)](p)\,|p|^{2}\\
&\leq f(t,p)\,|p|^{2} + h\tfrac{\kappa_0}{n_{c}[f](t)}\mathcal{Q}^{+}[f_{R}(t)](p)\,|p|^{2}\,,\quad (t,p)\in W\,.  
\end{align*}
It is not difficult to check, using the strong formulation, that for any function $F(t)\in\mathcal{S}$
\begin{equation*}
\|\mathcal{Q}^{+}[F(t)](p)\,|p|^{2}\|_{\infty} \leq 6\,\|F(t,\cdot)\,|\cdot|^{2}\|_{\infty}m_{4}\langle F(t) \rangle\, + 2\,m_{6}\langle F(t) \rangle\leq C(\mathcal{S})\,.
\end{equation*}
Thus,
\begin{align*}
f(t,p)\,|p|^{2} &> \mathfrak{h}_{\infty} - h\tfrac{\kappa_0}{n_{c}[f](t)}\big\|\mathcal{Q}^{+}[f_{R}(t)](p)\,|p|^{2}\big\|_{\infty}\\
&\geq \mathfrak{h}_{\infty} - h\,\delta^{-1}\,C(\mathcal{S})\geq 0.9\,\mathfrak{h}_{\infty}\,,\quad (t,p)\in W\,, 
\end{align*}
where, for the last step, $0 < h\leq 0.1\,\delta\,\mathfrak{h}_{\infty}/C(\mathcal{S})$.  As a consequence, $W\subset O$.  Since $O_{R}\nearrow O$ as $R\rightarrow\infty$, there exists $R=R_{2}(f)>0$ sufficiently large such that $W \cap O_{R}$ is of positive measure. 
Take $(s,q)$ in such intersection, then by Lemma \ref{quadratic-linear-infinity}
\begin{align*}
&w_{R}(s,q)\,|q|^{2} = f(s,q)\,|q|^{2} + h\tfrac{\kappa_0}{n_{c}[f](s)}\mathcal{Q}[f_{R}(s)](q)\,|q|^{2}\\
&\qquad\leq f(s,q)\,|q|^{2} + h\tfrac{\kappa_0}{n_{c}[f](s)}\Big(2\, m_{3}\langle f_{R}(s)\rangle\, |q|\,\big\|f_{R}(s,\cdot)\,|\cdot|^{2}\big\|_{\infty}\\
&- 4\,m_{3}\langle f_{R}(s)\rangle\,|q|\,\big(f_{R}(s,|q|)|q|^{2}\big) + 2\,m_{4}\langle f_{R}(s)\rangle\,|q|^{2} - c_{0}\,|q|^{5}\,\big(f_{R}(s,|q|)|q|^{2}\big)\Big)\\
&\quad \leq \mathfrak{h}_{\infty} + h\tfrac{\kappa_0}{n_{c}[f](s)}|q|\Big( - \tfrac{9}{5}\,m_{3}\langle f_{R}(s)\rangle\,\mathfrak{h}_{\infty} - 0.9\,c_0\,|q|^{4}\,\mathfrak{h}_{\infty} + 2\,m_{4}\langle f_{R}(s)\rangle\,|q|\Big)\,. 
\end{align*}
Using that $m_{4}\leq m^{3/4}_{3}m^{1/4}_{7}$ one obtains that the last parenthesis is majorized by
\begin{align*}
m_{3}\langle f_{R}(s)\rangle\big(- \tfrac{9}{5}\,\mathfrak{h}_{\infty} + \tfrac{3}{2}\big) + \big( \tfrac{1}{2}m_{7}\langle f(s)\rangle - 0.9\,c_0\,\mathfrak{h}_{\infty}\big)|q|^{4}\leq0\,,
\end{align*}
where the non positivity follows by taking $\mathfrak{h}_{\infty}\geq C(\mathfrak{h}_{3},\mathfrak{h}_{8})>0$ sufficiently large.  Therefore, $w_{R}(s,q)\,|q|^{2}\leq \mathfrak{h}_{\infty}$.  This contradicts the definition of $W$, thus, we conclude that $W$ must be empty for this choice of parameters $\mathfrak{h}_{\infty},\, R,$ and $h$.  Then, it is always the case that $\|w_{R}(t,\cdot)\,|\cdot|^{2}\|_{\infty} \leq \mathfrak{h}_{\infty}$ which verifies property {\bf iv}.

We infer due to previous discussion that for any $f\in\mathcal{C}\big([0,T];\mathcal{S}\big)$, there exists $R:=R_{3}(f)$ sufficiently large and $h_{*}:=h_{*}(f,\mathcal{S})>0$ sufficiently small such that that $w_{R}\in\mathcal{C}\big([0,T];\mathcal{S}\big)$ for any $0<h<h_{*}$.

Let us conclude the proof using the H\"older estimate from Lemma \ref{Lemma:Lipschitz} to obtain
\begin{align*}
h^{-1}\,m_{2}\big\langle \big|f(t) &+ h\tfrac{\kappa_0}{n_{c}[f](t)}\mathcal{Q}[f(t)] - w_{R}(t)\big|\big\rangle\\
&=m_{2}\big\langle \big|\tfrac{\kappa_0}{n_{c}[f](t)}\mathcal{Q}[f(t)]-\tfrac{\kappa_0}{n_{c}[f_{R}](t)}\mathcal{Q}[f_R(t)] \big|\big\rangle\\
&\hspace{-2cm}\leq C_{\delta,T}(\mathcal{S})\Big(\sup_{t\in[0,T]}m_{2}\big\langle\big|f(t)-f_{R}(t)\big|\big\rangle^{\frac{1}{6}} + \sup_{t\in[0,T]}m_{2}\big\langle\big| f(t)-f_{R}(t) \big|\big\rangle\Big)\leq \epsilon\,,
\end{align*}
where the last inequality is valid for for $R=R_{4}(f,\epsilon)>0$ sufficiently large. Then, $w_R(t)\in B\Big(f(t)+h\tfrac{\kappa_0}{n_{c}[f](t)}\mathcal{Q}[f(t)],h\,\epsilon\Big)$ for all times provided this choice of $R$.  Thus, choosing $R=\max\big\{R_{3}(f),R_{4}(f,\epsilon)\big\}$, one concludes that 
\begin{equation*}
w_R(t)\in B\Big(f(t)+h\tfrac{\kappa_0}{n_{c}[f](t)}\mathcal{Q}[f(t)],h\,\epsilon\Big)\cap\mathcal{S}\,,\quad 0<h<h_{*},\;\; t\in[0,T]\,.
\end{equation*}
Consequently,
\begin{equation*}
h^{-1}\sup_{t\in[0,T]}\text{dist}\big( f(t) + h\tfrac{\kappa_0}{n_{c}[f](t)}\mathcal{Q}[f(t)] , \mathcal{S}\big) \leq \epsilon\,, \qquad  \forall\,0<h<h_{*}\,. 
\end{equation*}
The proof of  Proposition~\ref{prop_subtan} is now complete and accounts for the sub-tangent condition.
\end{proof}
\subsection{One-side Lipschitz condition}\label{OnesidedLipschitz}
Using dominate convergence theorem one can show that
\begin{equation*}
\big[\varphi(t),\phi(t)\big] \leq \int_{\mathbb{R}^{3}}\text{d}p\,\varphi(t,p)\,\text{sign}(\phi(t,p))\,.
\end{equation*}
Thus, the one-side Lipschitz condition is met after proving the following lemma showing a Lipschitz condition for the interaction Boltzmann operator. The following proof, which yields a uniqueness results, is in the same spirit of the original Di Blassio \cite{DiBlasio} uniqueness proof for initial value problem to the homogeneous Boltzmann equation for hard spheres, using data with enough initial moments.
\begin{lemma}[Lipschitz condition]\label{LipCon}  Assume $f, g \in \mathcal{C}\big([0,T];\mathcal{S}\big)$.  Then, there exists constant $C:=C_{\delta, T}(\mathcal{S})>0$ such that
\begin{align*}
\int^{t}_{0}&ds\int_{\mathbb{R}^{3}}dp\,\Big(\tfrac{\kappa_0}{n_{c}[f](s)}\mathcal{Q}[f(s)] - \tfrac{\kappa_0}{n_{c}[g](s)}\mathcal{Q}[g(s)]\Big)\\
&\times \text{sign}\big(f(s)-g(s)\big)\big( 1 + |p|^{2} \big) \leq C\int^{t}_{0}ds\,m_{2}\big\langle |f(s)-g(s)|\big\rangle\,,\quad t\in[0,T]\,.
\end{align*}
\end{lemma}
\begin{proof}
Writing $\mathcal{Q}[f] = \mathcal{Q}_{q}[f] + \mathcal{L}[f]$, one has that
\begin{align*}
\int_{\mathbb{R}^{3}}&\text{d}p\,\big(\mathcal{Q}[f](p) - \mathcal{Q}[g](p)\big)\big( 1 + |p|^{2} \big)\text{sign}(f-g)=\\
&\int_{\mathbb{R}^{3}}\text{d}p\,\big(\mathcal{Q}_{q}[f](p) - \mathcal{Q}_{q}[g](p)\big)\big( 1 + |p|^{2} \big)\text{sign}(f-g)\\
 + \int_{\mathbb{R}^{3}}&\text{d}p\,\big(\mathcal{L}[f](p) - \mathcal{L}[g](p)\big)\big( 1 + |p|^{2} \big)\text{sign}(f-g)\,.
\end{align*}
For the quadratic part it follows, after a simple inspection of the weak formulation, that
\begin{align}\label{Lips:final1}
\begin{split}
\int_{\mathbb{R}^{3}}\text{d}p& \,\big(\mathcal{Q}_{q}[f](p) - \mathcal{Q}_{q}[g](p)\big)\big( 1 + |p|^{2} \big)\text{sign}(f-g)\\
&\qquad\leq C\max\big\{m_{2}\big\langle f+g \big\rangle,m_{4}\big\langle f+g \big\rangle,m_{6}\big\langle f + g \big\rangle \big\}\\
&\times \Big(m_{2}\big\langle |f-g| \big\rangle + m_{4}\big\langle |f-g| \big\rangle + m_{6}\big\langle |f-g| \big\rangle\Big)\,.
\end{split}
\end{align}
For the linear part it follows, after explicit calculation of the weak formulation for test function $\varphi(p)=\big(1+ |p|^{2}\big)\text{sign}(f-g)(p)$, that
\begin{align*}
\begin{split}
\int_{\mathbb{R}^3}&\text{d}p\,\big( \mathcal{L}[f](p) - \mathcal{L}[g](p) \big)\varphi(p)\\
&\qquad= \int_{\mathbb{R}_+}\int_{\mathbb{R}_+}\text{d}|p_1|\,\text{d}|p_2|\,\mathcal{K}_0\big(|p_{1}| + |p_{2}|,|p_{1}|,|p_{2}|\big)\\
&\times(f-g)(|p_1|+|p_2|)\Big[ \varphi(|p_1|) + \varphi(|p_2|) - \varphi(|p_1|+|p_2|)\Big]\\
&\qquad\leq\int_{\mathbb{R}_+}\int_{\mathbb{R}_+}d|p_1|\,d|p_2|\,\mathcal{K}_0\big(|p_{1}| + |p_{2}|,|p_{1}|,|p_{2}|\big)\\
&\times \Big|(f-g)(|p_1|+|p_2|)\Big|\,\Big[ |p_1|^{2} + |p_2|^{2} - \big( |p_1| + |p_2| \big)^{2} +1\Big]
\end{split}
\end{align*}
Therefore,
\begin{align}\label{Lips:final2}
\int_{\mathbb{R}^3}\text{d}p\,\big(\mathcal{L}[f](p) - \mathcal{L}[g](p)\big)\varphi(p)\leq c_0\,m_{7}\big\langle |f-g| \big\rangle - c_{2}\,m_{9}\big\langle |f-g| \big\rangle\,.
\end{align}
As a consequence, using estimates \eqref{Lips:final1} and \eqref{Lips:final2}, it follows that
\begin{align}\label{Lips:finalq}
\begin{split}
\int_{\mathbb{R}^{3}}\text{d}p & \,\big(\mathcal{Q}[f](p) - \mathcal{Q}[g](p)\big)\big( 1 + |p|^{2} \big)\text{sign}(f-g)\\
&\qquad\leq C(\mathcal{S})\Big(m_{2}\big\langle |f-g| \big\rangle + m_{7}\big\langle |f-g| \big\rangle\Big) - c_{2}\,m_{9}\,.
\end{split}
\end{align}
Now, writing
\begin{align*}
\tfrac{1}{n_{c}[f](t)}&\mathcal{Q}[f(t)] - \tfrac{1}{n_{c}[g](t)}\mathcal{Q}[g(t)]\\
&= \Big(\tfrac{1}{n_{c}[f](t)} - \tfrac{1}{n_{c}[g](t)} \Big)\mathcal{Q}[f(t)] + \tfrac{1}{n_{c}[g](t)}\Big( \mathcal{Q}[f(t)] - \mathcal{Q}[g(t)] \Big)\,,
\end{align*}
and using that
\begin{equation*}
\Big|\tfrac{1}{n_{c}[f](t)} - \tfrac{1}{n_{c}[g](t)} \Big| \leq \frac{C(\mathcal{S})}{\delta^{3}}\int^{t}_{0}\text{d}s\,m_{2}\big\langle |f(s)-g(s)| \big\rangle + m_{7}\big\langle |f(s)-g(s)| \rangle\,,
\end{equation*}  
together with \eqref{Lips:finalq}, we can derive the estimate
\begin{align}\label{Lips:final3}
\begin{split}
\int_{\mathbb{R}^{3}}&\text{d}p\,\Big(\tfrac{\kappa_0}{n_{c}[f](t)}\mathcal{Q}[f(t)] - \tfrac{\kappa_0}{n_{c}[g](t)}\mathcal{Q}[g(t)]\Big)\varphi (p)\\
&\qquad\leq C_{\delta}(\mathcal{S})\bigg(m_{2}\big\langle |f(t)-g(t)| \big\rangle + m_{7}\big\langle |f(t)-g(t)| \big\rangle\\
&\int^{t}_{0}\text{d}s\,m_{2}\big\langle |f(s)-g(s)| \big\rangle + \int^{t}_{0}\text{d}s\,m_{7}\big\langle |f(s)-g(s)| \big\rangle\bigg)\\
&\qquad\qquad - c(\mathcal{S},n_0)\, m_{9}\big\langle |f(t)-g(t)| \big\rangle\,.
\end{split}
\end{align}
After integrating estimate \eqref{Lips:final3} from $[0,t]$, it follows that 
\begin{align}\label{Lips:final4}
\begin{split}
&\int^{t}\text{d}s\int_{\mathbb{R}^{3}}\text{d}p\,\Big(\tfrac{\kappa_0}{n_{c}[f](s)}\mathcal{Q}[f(s)] - \tfrac{\kappa_0}{n_{c}[g](s)}\mathcal{Q}[g(s)]\Big)\varphi (p)\\
&\leq \int^{t}_{0}\text{d}s\bigg[C_{\delta}(\mathcal{S})(1+T)\Big(m_{2}\big\langle |f(s)-g(s)| \big\rangle + m_{7}\big\langle |f(s)-g(s)| \big\rangle \Big)\\
&- c(\mathcal{S},n_0)\, m_{9}\big\langle |f(s)-g(s)| \big\rangle\bigg] \leq C_{\delta,T}(\mathcal{S})\int^{t}_{0}\text{d}s\,m_{2}\big\langle |f(s)-g(s)| \big\rangle\,.
\end{split}
\end{align}
For the last inequality we used that $$C_{\delta}(\mathcal{S})(1+T)\big(|p|^{2} +  |p|^{7}\big) - c(\mathcal{S},n_0)|p|^{9}\leq C_{\delta,T}(\mathcal{S})|p|^{2}\,.$$  This completes the proof the the one-side Lipschitz property.
\end{proof}

Let us complete now the proof of Theorem \ref{Theorem:ExistenceUniqueness}.  As an application of Theorem \ref{Theorem_ODE_Eq}, where  the three conditions \eqref{Holder_C}, \eqref{subtangent}, and \eqref{one-sided-lip} have been verified in subsections \ref{Section:Lipschitz}, \ref{Subtangent}, and \ref{OnesidedLipschitz}, respectively, it follows that the system \eqref{nc}-\eqref{EBGP} has a unique solution $f\in\mathcal{C}\big([0,T];\mathcal{S}\big)$ where $T$ is any time such that $n_{c}[f](t)\geq\delta$, $t\in[0,T]$.  Clearly, such solution $(f(t), n_{c}[f](t))$ satisfies total conservation of mass $$m_{2}\langle f(t) \rangle + n_{c}[f](t) = m_{2}\langle f_0 \rangle + n_{0}\,,$$ and all conditions of Theorem \ref{BECstab} are satisfied.  Therefore,
\begin{equation*}
\inf_{t}n_{c}[f](t)\geq\delta>0\,.
\end{equation*}
As a consequence, $T>0$ is arbitrary.  This proves Theorem \ref{Theorem:ExistenceUniqueness}.
\end{proof}
\begin{proposition}[Creation of polynomial moments]\label{coro:creation}
Let the pair\\ $0\leq(f,n_{c})\in\mathcal{C}\big([0,\infty);\mathcal{S}\big)\times\mathcal{C}\big([0,\infty)\big)$ be the solution of the system \eqref{BGP} with initial datum $(f_0,n_0)>0$ satisfying condition \eqref{ncthres} for some $\delta>0$.  Then, there exists a constant ${C}_k>0$ that depends only on $k>3$ such that
\begin{equation*}
m_{k}\langle f \rangle(t)\leq \Big(\tfrac{1}{\delta (k-3)}\Big)^{\frac{k-3}{5}}\frac{m_{3}}{t^{\frac{k-3}{5}}} + C_{k}\,m^{\frac{k+1}{4}}_{3}\,,\qquad t>0\,.
\end{equation*}
\end{proposition}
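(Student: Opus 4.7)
The plan is to adapt the ODE-comparison argument already developed in the proof of Theorem \ref{Theorem:PropaPolynomial} so that it produces a bound independent of the initial line-moment $m_{k}(0)$. The starting point is the same differential inequality \eqref{e-ineq-moments} used for propagation: setting $y(t):=m_{k}(t)$ and $\alpha:=\tfrac{k+2}{k-3}>1$ (note $\alpha-1=\tfrac{5}{k-3}$), the computation that led to \eqref{e-ineq-moments} (with the re-labelling $k\mapsto k-2$) yields
\begin{equation*}
y'(t)\ \le\ \tfrac{\kappa_{0}}{n_{c}(t)}\,m_{3}^{-5/(k-3)}\Big(\tilde{C}_{k}\,m_{3}^{(k+2)(k+1)/(4(k-3))}-\tilde{c}_{k}\,y(t)^{\alpha}\Big).
\end{equation*}
Total mass conservation together with Theorem \ref{BECstab} (through the threshold condition \eqref{ncthres}) give the two-sided control $\delta\le n_{c}(t)\le n_{0}+m_{2}(0)$, so that the factor $1/n_{c}(t)$ is bounded by a constant that I will collect into the single symbol $1/\delta$ (the sign and role of this bound are set up so that the dissipation strength is controlled from below).

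Next, I would identify the equilibrium $y_{eq}:=(\tilde{C}_{k}/\tilde{c}_{k})^{(k-3)/(k+2)}\,m_{3}^{(k+1)/4}=C_{k}m_{3}^{(k+1)/4}$, the only zero of the polynomial bracket above. On the set $\{y\ge y_{eq}\}$ the source term is absorbed by half of the dissipation, and the inequality reduces to the Bernoulli-type relation
\begin{equation*}
y'(t)\ \le\ -c\,m_{3}^{-5/(k-3)}\,y(t)^{\alpha},\qquad c=c(\delta,\tilde{c}_{k})>0.
\end{equation*}
The crucial observation is that the ODE $Z'=-c\,m_{3}^{-5/(k-3)}Z^{\alpha}$ admits the \emph{maximal} solution starting from $Z(0^{+})=+\infty$,
\begin{equation*}
Z(t)\ =\ \bigl(c(\alpha-1)\,m_{3}^{-5/(k-3)}\,t\bigr)^{-1/(\alpha-1)}\ =\ \Bigl(\tfrac{1}{\delta(k-3)}\Bigr)^{(k-3)/5}\frac{m_{3}}{t^{(k-3)/5}}\,,
\end{equation*}
up to an explicit dimensionless constant that can be absorbed by the choice of $\delta$-prefactor. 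Define then $Y(t):=Z(t)+y_{eq}$, which I claim is a super-solution: above $y_{eq}$ it satisfies $Y'\le -\tfrac{c}{2}m_{3}^{-5/(k-3)}Y^{\alpha}$, and below $y_{eq}$ the bound is trivial. An ODI comparison (in the vein of the super-solution argument used at the end of the proof of Theorem \ref{Theorem:PropaPolynomial}) then shows that $y(t)\le Y(t)$ for all $t>0$, regardless of whether $m_{k}(0)$ is finite.

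The last step is to justify the comparison when $m_{k}(0)$ may be infinite, which is the only genuine novelty over Theorem \ref{Theorem:PropaPolynomial}. The clean way to handle this is a standard approximation: truncate the initial datum $f_{0}\mapsto f_{0}\chi_{\{|p|\le R\}}$ so that $m_{k}(0)<\infty$, apply the well-posedness Theorem \ref{Theorem:ExistenceUniqueness} plus the super-solution bound above to the truncated problem (noting that the threshold \eqref{ncthres} is preserved for $R$ large enough since $m_{2}$ is unaffected by truncation at high momenta), and then pass $R\to\infty$ using the continuity in $L^{1}(\mathbb{R}^{3},(1+|p|^{2})\,\mathrm{d}p)$ provided by the H\"{o}lder estimate of Lemma \ref{Lemma:Lipschitz}. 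The main obstacle I anticipate is exactly this limiting procedure: one must verify that the super-solution bound $m_{k}(t)\le Y(t)$ does not deteriorate in $R$, which follows because both $y_{eq}$ and $Z(t)$ depend only on $m_{3}$ (conserved along the flow) and on $\delta$ (uniform via Theorem \ref{BECstab}), giving the claimed estimate with constants independent of $R$.
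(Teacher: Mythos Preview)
Your outline captures the correct mechanism (a Bernoulli-type differential inequality for $m_k$ with superlinear damping), but it diverges from the paper's argument in two places, and one of them creates a genuine mismatch with the stated constant.

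\textbf{Handling of the factor $1/n_c(t)$.} You propose to bound $1/n_c(t)$ pointwise and then run the Bernoulli comparison. But on the set $\{y\ge y_{eq}\}$ the bracket in \eqref{e-ineq-moments} is \emph{negative}, so to control the dissipation from below you need a \emph{lower} bound on $1/n_c$, i.e.\ the upper bound $n_c(t)\le n_0+m_2(0)$. Your constant $c$ therefore depends on $n_0+m_2(0)$, not on $\delta$, and the resulting prefactor in $Z(t)$ cannot be written as $(1/(\delta(k-3)))^{(k-3)/5}$; it is not a dimensionless constant that can be ``absorbed''. The paper avoids this entirely by the time change $\alpha'(s)=1/n_c(\alpha(s))$, $\alpha(0)=0$: setting $y(s)=m_k(\alpha(s))$ removes the $n_c$-factor exactly, leaving the autonomous inequality $y'\le \kappa_0\,m_3^{-5/(k-3)}(\tilde C_k\,m_3^{\cdots}-\tilde c_k\,y^{(k+2)/(k-3)})$. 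The dependence on $\delta$ then enters only at the very end, through $\alpha^{-1}(t)\ge \delta t$ (which uses $n_c\ge\delta$), producing precisely the stated prefactor.

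\textbf{The approximation step.} The paper does not truncate $f_0$ or pass to a limit. Instead it writes down the explicit super-solution
\[
Y(s)=\frac{m_k(0)}{\bigl(1+\tfrac{k-3}{5}(m_k(0)/m_3)^{5/(k-3)}\,s\bigr)^{(k-3)/5}}+C_k\,m_3^{(k+1)/4},
\]
and simply observes that $Y(s)\le \bigl(\tfrac{5}{(k-3)}\bigr)^{(k-3)/5}m_3\,s^{-(k-3)/5}+C_k\,m_3^{(k+1)/4}$ \emph{uniformly} in $m_k(0)$. This is exactly your ``maximal solution from $+\infty$'' in disguise, but it sidesteps any limiting procedure. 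Your proposed truncation also has a technical snag: cutting $f_0$ at $|p|\le R$ perturbs $m_3$, so the truncated datum falls outside the fixed-energy set $\mathcal{S}$, and Lemma~\ref{Lemma:Lipschitz} gives H\"older continuity of the operator, not of the solution map, so the passage $R\to\infty$ would require separate justification.
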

\begin{proof}
Recall estimate \eqref{e-ineq-moments}
\begin{equation*}
\frac{\text{d}}{\text{d}t}m_{k+2}(t)\leq \tfrac{\kappa_0}{n_{c}[f](t)}m^{-\frac{5}{k-1}}_{3}\Big(\tilde{C}_{k}\,m^{\frac{(k+4)(k+3)}{4(k-1)}}_{3} - \tilde{c}_{k}\,m^{\frac{k+4}{k-1}}_{k+2}(t)\Big)\,,
\end{equation*}
for some constants $\tilde{C}_{k}$ and $\tilde{c}_{k}$ depending only on $k>1$.  Since $n_{c}[f](t)>0$, for $t\in[0,\infty)$, is Lipschitz continuous, we can solve uniquely the nonlinear ode 
\begin{equation*}
\alpha'(t) = \tfrac{1}{n_{c}[f](\alpha(t))}\,,\quad t>0,\quad \alpha(0)=0\,.
\end{equation*}
The solution $\alpha(t)$ is strictly increasing.  Thus, we can rescale estimate \eqref{e-ineq-moments} by defining the function $y(t)=m_{k+2}(\alpha(t))$, so that
\begin{equation*}
\frac{\text{d}y}{\text{d}t} \leq \kappa_{o}\,m^{-\frac{5}{k-1}}_{3}\Big(\tilde{C}_{k}\,m^{\frac{(k+4)(k+3)}{4(k-1)}}_{3} - \tilde{c}_{k}\,y^{\frac{k+4}{k-1}}\Big)\,.
\end{equation*}
It is not difficult to prove that a super solution for previous differential inequality is given by
\begin{align*}
Y(t)&=\frac{m_{k+2}(0)}{\Big(1+\frac{k-1}{5}\Big(\frac{m_{k+2}(0)}{m_{3}}\Big)^{\frac{5}{k-1}}t\Big)^{\frac{k-1}{5}}} + C_{k}\,m^{\frac{k+3}{4}}_{3}\\
&\qquad\leq \Big(\tfrac{5}{k-1}\Big)^{\frac{k-1}{5}}\frac{m_{3}}{t^{\frac{k-1}{5}}} + C_{k}\,m^{\frac{k+3}{4}}_{3}\,.
\end{align*}
Hence $y(t)\leq Y(t)$ for all times.  Observe that $\alpha'(t)\leq\frac{1}{\delta}$, this implies that $\delta\,t\leq\alpha^{-1}(t)$.  As a consequence,
\begin{align*}
m_{k+2}(t)\leq Y(\alpha^{-1}(t)) &\leq \Big(\tfrac{5}{k-1}\Big)^{\frac{k-1}{5}}\frac{m_{3}}{\big(\alpha^{-1}(t)\big)^{\frac{k-1}{5}}} + C_{k}\,m^{\frac{k+3}{4}}_{3}\\
&\leq \Big(\tfrac{5}{\delta(k-1)}\Big)^{\frac{k-1}{5}}\frac{m_{3}}{t^{\frac{k-1}{5}}} + C_{k}\,m^{\frac{k+3}{4}}_{3}\,.
\end{align*}
\end{proof}

\section{Mittag-Leffler moments}
\subsection{Propagation of Mittag-Leffler tails}
In this section we are interested in studying the propagation and creation of Mittag-Leffler moments of order $a\in[1,\infty)$ and rate $\alpha>0$ for radially symmetric solutions built in section 5.  This concept of Mittag-Leffler tails was introduced recently in \cite{AlonsoGamba:2015:OML} and it is a generalization of the classical exponential tails for hard potentials in Boltzmann equations.  The creation of exponential tail in the solutions formalize, at least qualitatively, the notion of low temperature regime which is key in the derivation of the model.  We perform the analysis using standard moments $\mathcal{M}_{k}$ stressing that same estimates are valid for line moments since $\mathcal{M}_{k} = |\mathbb{S}^{2}|\,m_{k+2}$ in the context of radially symmetric solutions.  In terms of infinite sums, see \cite{AlonsoGamba:2015:OML}, this is equivalent to control the integral
\begin{equation}\label{MittagLefflerMomentSum}
\int_{\mathbb{R}^3}\text{d}p\,f(t,p)\mathcal{E}_a(\alpha^{a}|p|)=\sum_{k=1}^\infty\frac{\mathcal{M}_{k}(t)\alpha^{ak}}{\Gamma(ak+1)}\,,
\end{equation}
where
\begin{equation}\label{ExtendedMittagLeffler}
\mathcal{E}_a(x):=\sum_{k=1}^\infty\frac{x^k}{\Gamma(ak+1)} \approx e^{x^{1/a}}-1\,,\qquad x\gg1\,.
\end{equation}
For convenience define for any $\alpha>0$ and $a\in[1,\infty)$ the partial sums
\begin{equation*}
\mathcal{E}_a^n(\alpha,t):=\sum_{k=1}^n\frac{\mathcal{M}_{k}(t)\alpha^{ak}}{\Gamma(ak+1)}\quad \text{and} \quad
\mathcal{I}^n_{a,\rho}(\alpha,t):=\sum_{k=1}^n\frac{\mathcal{M}_{k+\rho}(t)\alpha^{ak}}{\Gamma(ak+1)}\,,\quad \rho>0\,.
\end{equation*}
This notation will be of good use throughout this section.
\begin{theorem}[Propagation of Mittag-Leffler tails]\label{Theorem:PropaExpo}
Consider the pair $0\leq(f,n_{c})\in\mathcal{C}\big([0,\infty);\mathcal{S}\big)\times\mathcal{C}\big([0,\infty)\big)$ to be the solution of \eqref{BGP} associated to the initial condition $(f_0,n_0)>0$ satisfying condition \eqref{ncthres} for some $\delta>0$.  Take $a\in[1,\infty)$ and suppose that there exists positive $\alpha_0$ such that
$$\int_{\mathbb{R}^3}dp\,f_0(p)\,\mathcal{E}_a(\alpha_0^a|p|)\leq 1\,.$$
Then, there exists positive constant $\alpha:=\alpha(\mathcal{M}_{1}(0),\alpha_0,a)$ such that
\begin{equation}\label{Theorem:PropaExpo:Eq0}
\sup_{t\geq0}\int_{\mathbb{R}^3}dp\,f(t,p)\,\mathcal{E}_a(\alpha^a|p|)\leq 2\,.
\end{equation}
\end{theorem}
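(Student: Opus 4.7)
\smallskip

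\noindent\textbf{Proof proposal.}
The plan is to adapt the partial-sum strategy of Alonso--Gamba \cite{AlonsoGamba:2015:OML} to the present quantum collision operator. Rather than controlling the infinite series \eqref{MittagLefflerMomentSum} directly, I would work with the truncated sum $\mathcal{E}_a^n(\alpha,t)$ and show that it stays below $2$ uniformly in $n$ for an appropriate $\alpha>0$, then pass to the limit $n\to\infty$ by monotone convergence.

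First, I would derive ODE inequalities for the individual standard moments $\mathcal{M}_k(t)$ using the radial weak formulation \eqref{WeakFormulation:radial} with test function $\varphi(|p|)=|p|^k$. Expanding $(|p_1|+|p_2|)^k-|p_1|^k-|p_2|^k$ by the binomial theorem and using the symmetric kernel $K(|p_1|,|p_2|)=|p_1|^2|p_2|^2(|p_1|+|p_2|)^2$, the quadratic positive contribution yields, in line-moment form, a sum of Beta-type bilinear products $\sum_{j=1}^{k-1}\binom{k}{j}\,m_{\alpha(j)}\langle f\rangle\,m_{\beta(k-j)}\langle f\rangle$, while the linear part (from Lemma~\ref{control1}) produces a negative term proportional to $-c_k\,m_{k+7}\langle f\rangle$ with $c_k\sim k^{-1}$. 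Converting between $\mathcal{M}_k$ and $m_{k+2}$ (modulo the constant $|\mathbb{S}^2|$), this yields
\[
\frac{d\mathcal{M}_k}{dt}\;\le\;\frac{\kappa_0}{n_c(t)}\Bigl(C\sum_{j=1}^{k-1}\binom{k}{j}\,\mathcal{M}_{j+q_1}\mathcal{M}_{k-j+q_2}\;-\;c_k\,\mathcal{M}_{k+\rho}\Bigr)
\]
for explicit small indices $q_1,q_2$ and some fixed $\rho>0$ (dictated by the degree $|p|^7$ of the loss term). The bound $n_c(t)\ge\delta$, guaranteed by Theorem~\ref{BECstab} under the threshold hypothesis, removes the singularity in the prefactor.

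Second, I would multiply each such inequality by $\alpha^{ak}/\Gamma(ak+1)$ and sum $k=1,\dots,n$. The critical combinatorial input is the Beta-function estimate
\[
\sum_{j=1}^{k-1}\binom{k}{j}\frac{\alpha^{ak}}{\Gamma(ak+1)}(\cdot)\;\le\;B_a\,\frac{\alpha^{a j}}{\Gamma(a j+1)}\cdot\frac{\alpha^{a(k-j)}}{\Gamma(a(k-j)+1)}
\]
valid for $a\ge 1$ with constant $B_a$ depending only on $a$, which is exactly the mechanism used in \cite{AlonsoGamba:2015:OML}. After a double-index rearrangement this converts the bilinear right-hand side into a product of two partial sums of the form $\mathcal{E}_a^n\cdot\mathcal{I}_{a,q}^n$, while the negative term becomes $-\,c\,\mathcal{I}_{a,\rho}^n$ (up to a shift in the index matched by a factor $\alpha^{-a\rho}$ and a uniform lower bound on $c_k k$ in the relevant range). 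The moment of order one (energy) is conserved and finite, so the $k=1$ term in $\mathcal{E}_a^n$ is under control for all time.

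Third, I would compare the two control quantities. Using Young-type inequalities on the product $\mathcal{E}_a^n\cdot\mathcal{I}_{a,q}^n$, one obtains for some constants $A,B>0$ (independent of $n$ and $t$) the closed differential inequality
\[
\frac{d}{dt}\mathcal{E}_a^n(\alpha,t)\;\le\;\frac{\kappa_0}{\delta}\Bigl(A\,\alpha^{a\rho}\,\bigl(\mathcal{E}_a^n(\alpha,t)\bigr)^{2}\;-\;B\,\mathcal{E}_a^n(\alpha,t)\Bigr)+\mathcal{R}_n(\alpha,t),
\]
where $\mathcal{R}_n$ collects finitely many low-order tails whose contribution is bounded by $C\alpha^{a}\mathcal{M}_1(0)$. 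Choosing $\alpha\in(0,\alpha_0]$ so small that $\mathcal{R}_n\le B/2$ and $A\alpha^{a\rho}\le B/4$, the right-hand side becomes negative as soon as $\mathcal{E}_a^n(\alpha,t)\ge 2$. Since by hypothesis $\mathcal{E}_a^n(\alpha_0,0)\le 1$ and the map $\alpha\mapsto\mathcal{E}_a^n(\alpha,0)$ is monotone, $\mathcal{E}_a^n(\alpha,0)\le 1<2$, so a standard barrier argument gives $\sup_{t\ge 0}\mathcal{E}_a^n(\alpha,t)\le 2$ for every $n$. Letting $n\to\infty$ yields \eqref{Theorem:PropaExpo:Eq0}.

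The main obstacle I anticipate is the combinatorial matching of indices: the quadratic terms in $\mathcal{Q}_q$ are degree $q=4$ (two factors from $K$) shifted by binomial tails, while the linear dissipation has degree $\rho=5$ above $\mathcal{M}_k$, so one must track carefully that $q<\rho$ (i.e.~the gain is dominated by the loss in the tail) and verify the Beta-function bound in precisely that regime. Once the index bookkeeping is set up so that the negative term controls the positive bilinear term for $\alpha$ small, the rest of the argument is a direct transcription of the Mittag-Leffler partial-sum scheme.
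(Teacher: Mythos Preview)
Your overall strategy is the paper's: truncate to partial sums $\mathcal{E}_a^n$, derive a moment ODE from the weak formulation, apply the Beta-function combinatorics of \cite{AlonsoGamba:2015:OML}, and close via a barrier at level $2$. The structural moment inequality you sketch is also right: the quadratic part contributes a bilinear sum controlled by $\mathcal{E}_a^n\,\mathcal{I}_{a,2}^n$ and the linear part contributes $-c_k\,\mathcal{M}_{k+5}$, hence $-c_{k_0}\,\mathcal{I}_{a,5}^n$ after summation.

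The gap is in your absorption step. You assert that the bilinear term enters the final differential inequality with a prefactor $A\alpha^{a\rho}$, to be made small by shrinking $\alpha$. That factor is not produced by any of the manipulations you list: the Beta identity yields a constant in front of $\mathcal{E}_a^n\,\mathcal{I}_{a,2}^n$ that is \emph{independent of $\alpha$}, and after the interpolation $\mathcal{I}_{a,2}^n\le \tfrac{3}{5}\mathcal{E}_a^n+\tfrac{2}{5}\mathcal{I}_{a,5}^n$ together with $\mathcal{E}_a^n\le 2$, the coefficient of $\mathcal{I}_{a,5}^n$ on the gain side is a fixed multiple of that constant. Since $c_k$ is merely bounded (it increases to $8\pi^2\int_0^1 z^2(1-z)^2\,dz$), there is no reason this fixed coefficient should be below $c_{k_0}$, and the inequality does not close on $\alpha$ alone.

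The paper's mechanism is different: begin the summation at a large cutoff $k_0$ and use the sharp form of the Beta estimate (Lemma~\ref{Lemma:BetaEstimate}),
\[
\sum_{i=1}^{[(k+1)/2]}\binom{k}{i}B(ai+1,a(k-i)+1)\le \frac{C_a}{(ak)^{1+a}}\,,
\]
so that the bilinear term carries the \emph{decaying} factor $C_a/(ak_0)^a$ (Lemma~\ref{Lemma:PropaExpo}). One first fixes $k_0=k_0(a)$ large enough that the $\mathcal{I}_{a,5}^n$ contribution from the gain is absorbed by $\tfrac{1}{2}c_{k_0}\mathcal{I}_{a,5}^n$; the finitely many moments with $k<k_0$ are controlled uniformly by Theorem~\ref{Theorem:PropaPolynomial}. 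Only \emph{after} this does $\alpha$ enter, via Lemma~\ref{Lemma:EIRelation} (namely $\mathcal{I}_{a,5}^n\ge \alpha^{-5/2}\mathcal{E}_a^n-\alpha^{-2}\mathcal{M}_1\mathcal{E}_a(\alpha^{a-1/2})$), and $\alpha$ is chosen small to force the resulting constant term below the barrier $2$. In short: $k_0$ large buys the absorption of the bilinear term, $\alpha$ small buys the barrier level; your write-up conflates the two and the closed inequality you display does not follow.
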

\begin{lemma}[From Ref. \cite{AlonsoGamba:2015:OML}]\label{Lemma:BetaEstimate}
Let $k\geq 3$, then for any $a\in[1,\infty)$, we have 
$$\sum_{i=1}^{\left[\frac{k+1}{2}\right]}{{k}\choose{i}}B\big(ai+1,a(k-i)+1\big)\leq \frac{C_a}{(ak)^{1+a}},$$
where $B(\cdot,\cdot)$ is the beta function.  The constant $C_a>0$ depends only on $a$.
\end{lemma}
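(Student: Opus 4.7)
The plan is to treat this as asymptotic analysis in $k$. Expanding the beta function as $B(ai+1,a(k-i)+1)=\Gamma(ai+1)\Gamma(a(k-i)+1)/\Gamma(ak+2)$ and applying Stirling's formula $\Gamma(x+1)=\sqrt{2\pi x}(x/e)^{x}(1+O(1/x))$ to every Gamma and factorial, the exponential factors $e^{-\ast}$ cancel exactly (owing to $k=i+(k-i)$ and $ak=ai+a(k-i)$). Writing $\lambda:=i/k\in(0,1/2]$, this produces the uniform leading-order behaviour
\[
\binom{k}{i} B\bigl(ai+1,\,a(k-i)+1\bigr) \;\sim\; \frac{\sqrt{a}}{ak+1}\,\bigl[\lambda^{\lambda}(1-\lambda)^{1-\lambda}\bigr]^{(a-1)k}\,.
\]

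I would then split the sum at an intermediate index $i_{0}$. In the \emph{bulk regime} $i_{0}\le i\le \lfloor (k+1)/2\rfloor$, the bracket $\lambda^{\lambda}(1-\lambda)^{1-\lambda}$ is bounded strictly below $1$ (equal to $1/2$ at $\lambda=1/2$), which for $a\ge 1$ yields decay $\rho^{(a-1)k}$ with $\rho\in[1/2,1)$; the resulting geometric series in $i$ is summable and its contribution is easily absorbed into $C_{a}/(ak)^{1+a}$. In the \emph{boundary regime} $1\le i<i_{0}$, I would instead use the pointwise ratio expansion $\Gamma(a(k-i)+1)/\Gamma(ak+2)=(ak)^{-(ai+1)}(1+O(1/k))$ (a consequence of $\Gamma(y)/\Gamma(y+c)\sim y^{-c}$) together with $\binom{k}{i}\le k^{i}/i!$, bounding each term by $\Gamma(ai+1)/(i!\,a^{ai+1}\,k^{(a-1)i+1})$, then summing the geometric-type series in $i$ and collecting the resulting powers of $ak$.

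The main obstacle will be extracting the sharp exponent $1+a$ rather than the weaker $a$ that emerges from a naive treatment of the $i=1$ boundary term. Achieving this seems to require a finer expansion: either tracking one additional order in Stirling's ratio, or --- likely the cleanest route, as used in \cite{AlonsoGamba:2015:OML} --- going through the integral representation $B(ai+1,a(k-i)+1)=\int_{0}^{1}t^{ai}(1-t)^{a(k-i)}\,dt$ and applying Laplace's method at the saddle point $t_{\star}=\lambda$ to read off simultaneously the exponential rate and the sharp polynomial prefactor. A uniform-in-$i$ control of the Stirling remainder over $1\le i\le\lfloor(k+1)/2\rfloor$ (for instance through Robbins' explicit bounds) would then pin down the constant $C_{a}$ and close the argument.
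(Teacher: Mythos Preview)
The paper does not prove this lemma at all; it merely cites \cite{AlonsoGamba:2015:OML} and uses the result as a black box in Lemma~\ref{Lemma:PropaExpo}. So there is no proof in the present paper to compare against, and your write-up is effectively an attempt to reconstruct the argument from the cited reference.

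Your Stirling-based asymptotic and the bulk/boundary splitting are the natural route, and your leading-order expression $\binom{k}{i}B(ai+1,a(k-i)+1)\sim \frac{\sqrt{a}}{ak+1}[\lambda^\lambda(1-\lambda)^{1-\lambda}]^{(a-1)k}$ is correct. However, the ``main obstacle'' you flag is not a technicality to be overcome by a sharper expansion --- it is fatal to the statement as written here. For $i=1$ one has exactly
\[
\binom{k}{1}B(a+1,a(k-1)+1)=k\,\frac{\Gamma(a+1)\,\Gamma(ak-a+1)}{\Gamma(ak+2)}\sim \frac{\Gamma(a+1)}{a^{a+1}}\,\frac{1}{k^{a}}\,,
\]
which already dominates the claimed bound $C_a/(ak)^{1+a}\sim c_a/k^{1+a}$ for large $k$. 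The case $a=1$ makes this transparent: $\binom{k}{i}B(i+1,k-i+1)=1/(k+1)$ for every $i$, so the sum equals $\lfloor(k+1)/2\rfloor/(k+1)\to 1/2$, certainly not $O(k^{-2})$. No refinement of Stirling or Laplace's method can produce a bound smaller than an individual positive summand.

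In short, the inequality as transcribed in this paper appears to carry a typo in the exponent (the application in Lemma~\ref{Lemma:PropaExpo} actually multiplies by $ak+1$ and only ever uses decay of order $(ak_0)^{-a}$, which is consistent with a bound of order $(ak)^{-a}$ rather than $(ak)^{-(1+a)}$ for the sum). You should consult the original statement in \cite{AlonsoGamba:2015:OML} before investing further effort; your scheme would go through cleanly for the corrected exponent.
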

\begin{lemma}\label{Lemma:PropaExpo}
Let $\alpha>0$, $a\in[1,\infty)$.  Then, the following estimate holds
\begin{equation}\label{Lemma:PropaExpo:Eq0}
\mathcal{J}:=\sum_{k=k_0}^n\sum_{i=1}^{\left[\frac{k+1}{2}\right]}{{k}\choose{i}}\frac{\mathcal{M}_{i+2}\mathcal{M}_{k-i}\,\alpha^{ak}}{\Gamma(ak+1)} \leq \frac{C_{a}}{(ak_0)^{a}} \,\mathcal{E}_a^n\,\mathcal{I}^n_{a,2}\,,\quad n\geq k_0\geq1\,,
\end{equation}
with universal constant $C_{a}$ depending only on $a$.
\end{lemma}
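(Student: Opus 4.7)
The plan is to introduce the Beta function as a ``reciprocal'' for $\Gamma(ak+1)$ so that Lemma 7.2 becomes directly applicable to the inner sum. Concretely, from the identity $B(p,q)=\Gamma(p)\Gamma(q)/\Gamma(p+q)$ applied with $p=ai+1$ and $q=a(k-i)+1$, one obtains
\[
\frac{1}{\Gamma(ak+1)} \;=\; \frac{(ak+1)\,B(ai+1,\,a(k-i)+1)}{\Gamma(ai+1)\,\Gamma(a(k-i)+1)}.
\]
Substituting this into the definition of $\mathcal{J}$ and splitting $\alpha^{ak}=\alpha^{ai}\,\alpha^{a(k-i)}$ rewrites the double sum in the suggestive product form
\[
\mathcal{J} \;=\; \sum_{k=k_0}^{n}(ak+1)\sum_{i=1}^{[(k+1)/2]}\binom{k}{i}\,B(ai+1,a(k-i)+1)\,A_{i}\,D_{k-i},
\]
where $A_{i}:=\mathcal{M}_{i+2}\alpha^{ai}/\Gamma(ai+1)$ and $D_{j}:=\mathcal{M}_{j}\alpha^{aj}/\Gamma(aj+1)$ are precisely the generic terms of $\mathcal{I}_{a,2}^{n}$ and $\mathcal{E}_{a}^{n}$, respectively.

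The second step uses non-negativity of all quantities to decouple the Mittag-Leffler structures from the combinatorial kernel: since $1\leq i\leq n$ and (for $k_0\geq 2$) $1\leq k-i\leq n$ in the admissible range,
\[
A_{i}\;\leq\;\mathcal{I}_{a,2}^{n},\qquad D_{k-i}\;\leq\;\mathcal{E}_{a}^{n}.
\]
The borderline possibility $k-i=0$ arises only for $k=i=1$ and can be absorbed by a constant using conservation of mass (or handled by splitting off the $k_0=1$ term). Pulling these uniform bounds out of the inner sum and invoking Lemma 7.2 then yields
\[
\mathcal{J}\;\leq\;C_{a}\,\mathcal{I}_{a,2}^{n}\,\mathcal{E}_{a}^{n}\sum_{k=k_0}^{n}\frac{ak+1}{(ak)^{1+a}}.
\]

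The final step is a routine scalar summation in $k$. Using $(ak+1)/(ak)^{1+a}\lesssim(ak)^{-a}$ for $ak\geq 1$ and comparing the tail with the integral $\int_{k_0}^{\infty}(ax)^{-a}\,dx$ produces a decay of order $(ak_{0})^{-a}$, which is the advertised estimate. The main technical point—and the only part that is not pure bookkeeping—is the first one: identifying the particular Beta-Gamma factorization that both reproduces the structure of Lemma 7.2 and places the remaining factors exactly into the shape of the building blocks $\mathcal{I}_{a,2}^{n}$ and $\mathcal{E}_{a}^{n}$. A secondary delicate point is verifying that the uniform domination of $A_i$ and $D_{k-i}$ does not lose too much in the subsequent tail sum in $k$; this is where the full strength of the $(ak)^{-(1+a)}$ decay from Lemma 7.2 is essential.
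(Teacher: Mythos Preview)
Your Beta--Gamma factorization and the invocation of Lemma~\ref{Lemma:BetaEstimate} are exactly right, and coincide with the paper's approach through the line
\[
\mathcal{J}\leq C_a\sum_{k=k_0}^{n}\frac{ak+1}{(ak)^{1+a}}\sum_{i=1}^{[(k+1)/2]}A_iD_{k-i}.
\]
The gap is in what comes next. You bound each $A_i$ by $\mathcal{I}_{a,2}^n$ and each $D_{k-i}$ by $\mathcal{E}_a^n$, leaving the scalar tail $\sum_{k\geq k_0}(ak+1)/(ak)^{1+a}$. But $(ak+1)/(ak)^{1+a}\sim (ak)^{-a}$, and this series \emph{diverges} at the endpoint $a=1$; for $a>1$ the integral comparison gives only $(ak_0)^{1-a}$, not $(ak_0)^{-a}$. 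So the uniform domination of $A_i,D_{k-i}$ loses one full power of $k_0$, precisely the concern you flagged but did not resolve.

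The fix, which is what the paper does, reverses the order of the two estimates. First bound the scalar factor \emph{termwise}: $(ak+1)/(ak)^{1+a}\leq C_a/(ak_0)^a$ uniformly for $k\geq k_0$. Pulling this constant out leaves the double sum $\sum_{k=k_0}^{n}\sum_{i=1}^{[(k+1)/2]}A_iD_{k-i}$, which is a sub-sum of the Cauchy product and hence dominated by $\bigl(\sum_i A_i\bigr)\bigl(\sum_j D_j\bigr)=\mathcal{I}_{a,2}^n\,\mathcal{E}_a^n$. The point is that the convolution structure of $\sum_k\sum_i A_iD_{k-i}$ supplies the missing convergence in $k$ for free, so you must not replace the individual terms by their sums before summing in $k$.
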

\begin{proof} 
Using the following identities for the Beta and Gamma functions
\begin{align*}
B(&ai+1,\,a(k-i)+1)\\
&=\frac{\Gamma(ai+1)\,\Gamma(a(k-i)+1)}{\Gamma(ai+1+a(k-i)+1)}=\frac{\Gamma(ai+1)\,\Gamma(a(k-i)+1)}{\Gamma(ak+2)}\,,
\end{align*}
and the identity $\alpha^{ak}=\alpha^{\alpha i}\alpha^{a(k-i)}$, we deduce that
\begin{align}\label{Lemma:PropaExpo:Eq0cc}
\begin{split}
\mathcal{J} = \sum_{k=k_0}^n(ak+1)\sum_{i=1}^{\left[\frac{k+1}{2}\right]}{{k}\choose{i}}&\frac{\mathcal{M}_{i+2}\alpha^{ai}}{\Gamma(ai+1)}\frac{\mathcal{M}_{k-i}\alpha^{a(k-i)}}{\Gamma(a(k-i)+1)}\\
&\qquad\times B(ai+1,a(k-i)+1)\,,
\end{split}
\end{align}
where we used that $\Gamma(ak+2)=(ak+1)\Gamma(ak+1)$.  In addition, each component in the inner sum on the right side of \eqref{Lemma:PropaExpo:Eq0cc} can be bounded as
\begin{align*}
\sum_{i=1}^{\left[\frac{k+1}{2}\right]}&{{k}\choose{i}}\frac{\mathcal{M}_{i+2}\alpha^{ai}}{\Gamma(ai+1)}\frac{\mathcal{M}_{k-i}\alpha^{a(k-i)}}{\Gamma(a(k-i)+1)}B(ai+1,a(k-i)+1)\\
&\leq\sum_{i=1}^{\left[\frac{k+1}{2}\right]}\frac{\mathcal{M}_{i+2}\alpha^{ai}}{\Gamma(ai+1)}\frac{\mathcal{M}_{k-i}\alpha^{a(k-i)}}{\Gamma(a(k-i)+1)}\sum_{j=1}^{\left[\frac{k+1}{2}\right]}{{k}\choose{j}}B(aj+1,a(k-j)+1)\,,
\end{align*}
which implies, by Lemma \ref{Lemma:BetaEstimate}, that
\begin{align}\label{Lemma:PropaExpo:Eq0d}
\begin{split}
\sum_{i=1}^{\left[\frac{k+1}{2}\right]}&{{k}\choose{i}}\frac{\mathcal{M}_{i+2}\alpha^{ai}}{\Gamma(ai+1)}\frac{\mathcal{M}_{k-i}\alpha^{a(k-i)}}{\Gamma(a(k-i)+1)}B(ai+1,a(k-i)+1)\\
&\leq\frac{C_a}{(ak)^{1+a}}\sum_{i=1}^{\left[\frac{k+1}{2}\right]}\frac{\mathcal{M}_{i+2}\alpha^{ai}}{\Gamma(ai+1)}\frac{\mathcal{M}_{k-i}\alpha^{a(k-i)}}{\Gamma(a(k-i)+1)}\,.
\end{split}
\end{align}
Combining \eqref{Lemma:PropaExpo:Eq0cc} and \eqref{Lemma:PropaExpo:Eq0d} yields the estimate on $\mathcal{J}$
\begin{equation}\label{Lemma:PropaExpo:Eq0e}
\mathcal{J}\leq C_a\sum_{k=k_0}^n\frac{ak+1}{(ak)^{1+a}}\sum_{i=1}^{\left[\frac{k+1}{2}\right]}\frac{\mathcal{M}_{i+2}\alpha^{ai}}{\Gamma(ai+1)}\frac{\mathcal{M}_{k-i}\alpha^{a(k-i)}}{\Gamma(a(k-i)+1)}\,.
\end{equation}
Noticing that $\frac{ak+1}{(ak)^{1+a}}\leq\frac{1+a}{a}\frac{1}{(ak_0)^{a}}$ for $k\geq k_{0}$, one concludes from \eqref{Lemma:PropaExpo:Eq0e} that
\begin{align}\label{Lemma:PropaExpo:Eq0f}
\begin{split}
\mathcal{J} &\leq \frac{C'_{a}}{(ak_0)^{a}}\sum_{k=k_0}^n\sum_{i=1}^{\left[\frac{k+1}{2}\right]}\frac{\mathcal{M}_{i+2}\alpha^{ai}}{\Gamma(ai+1)}\frac{\mathcal{M}_{k-i}\alpha^{a(k-i)}}{\Gamma(a(k-i)+1)}\\
&\leq \frac{C'_{a}}{(ak_0)^{a}}\sum_{i=1}^n\frac{\mathcal{M}_{i+2}\alpha^{ai}}{\Gamma(ai+1)}\sum_{i=1}^n\frac{\mathcal{M}_{i}\alpha^{ai}}{\Gamma(ai+1)} \leq \frac{C'_{a}}{(ak_0)^{a}} \mathcal{E}_a^n\,\mathcal{I}^n_{a,2}.
\end{split}
\end{align}
\end{proof}
\begin{lemma}\label{Lemma:EIRelation}
The following control is valid for any $\alpha>0$ and $a\in[1,\infty)$ 
\begin{equation}\label{Lemma:EIRelation:E1}
\mathcal{I}_{a,5}^n(\alpha,t) \geq \frac{1}{\alpha^{5/2}}\mathcal{E}_a^n(\alpha,t)-\frac{1}{\alpha^{2}}\mathcal{M}_1\mathcal{E}_a(\alpha^{a-1/2})\,.
\end{equation}
\end{lemma}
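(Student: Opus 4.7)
The inequality is purely algebraic, rearranging to the equivalent statement
\begin{equation*}
\mathcal{E}_a^n(\alpha,t)\ \leq\ \alpha^{5/2}\,\mathcal{I}_{a,5}^n(\alpha,t)\ +\ \alpha^{1/2}\,\mathcal{M}_1\,\mathcal{E}_a(\alpha^{a-1/2})\,,
\end{equation*}
so I would aim to prove this form by controlling each moment $\mathcal{M}_k$ appearing in $\mathcal{E}_a^n$ by a combination of $\mathcal{M}_1$ and $\mathcal{M}_{k+5}$ via a splitting of the momentum domain, and then summing.

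First, for any $r>0$ and any $k\geq 1$ I would write
\begin{equation*}
\mathcal{M}_k(t) \ =\ \int_{\{|p|\leq r\}} f(t,p)|p|^k\,dp \ +\ \int_{\{|p|> r\}} f(t,p)|p|^k\,dp \ \leq\ r^{k-1}\,\mathcal{M}_1(t) \ +\ r^{-5}\,\mathcal{M}_{k+5}(t)\,,
\end{equation*}
using $|p|^k\leq r^{k-1}|p|$ on the low-energy set and $|p|^k\leq r^{-5}|p|^{k+5}$ on the high-energy set. Multiplying by $\alpha^{ak}/\Gamma(ak+1)$ and summing over $k=1,\dots,n$ gives
\begin{equation*}
\mathcal{E}_a^n(\alpha,t) \ \leq\ \frac{\mathcal{M}_1(t)}{r}\sum_{k=1}^n \frac{(r\alpha^{a})^k}{\Gamma(ak+1)} \ +\ r^{-5}\,\mathcal{I}_{a,5}^n(\alpha,t)\,.
\end{equation*}

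The choice of $r$ is then dictated by matching the claimed powers of $\alpha$: I would take $r=\alpha^{-1/2}$, which yields $r^{-5}=\alpha^{5/2}$, $r^{-1}=\alpha^{1/2}$, and $r\alpha^{a}=\alpha^{a-1/2}$. Substituting and bounding the partial sum by the full series $\mathcal{E}_a(\alpha^{a-1/2})=\sum_{k\geq 1}(\alpha^{a-1/2})^k/\Gamma(ak+1)$ (all terms are nonnegative), I would obtain the displayed inequality above, which is equivalent to \eqref{Lemma:EIRelation:E1} after dividing through by $\alpha^{5/2}$.

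There is no real obstacle here beyond making the right choice of cutoff $r$; the estimate does not use any structure of the collision operator or time dependence, only positivity of $f$ and the definition of the truncated Mittag-Leffler sums. One sanity check I would perform is that the extra factor $\alpha^{a-1/2}$ in the tail term reflects the fact that, for $a\geq 1$, the ``lost'' bulk contribution is bounded by $\mathcal{M}_1$ times a Mittag-Leffler-type series in a smaller parameter than the working $\alpha^a$, so that the estimate is genuinely informative in the regime $\alpha$ small.
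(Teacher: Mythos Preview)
Your proof is correct and follows essentially the same route as the paper: both arguments split the momentum integral at the cutoff $r=\alpha^{-1/2}$ and use the pointwise bounds $|p|^{k}\le r^{k-1}|p|$ on $\{|p|\le r\}$ and $|p|^{k}\le r^{-5}|p|^{k+5}$ on $\{|p|>r\}$. The only difference is cosmetic --- the paper starts from $\mathcal{I}^n_{a,5}$ and lower-bounds it, whereas you start from $\mathcal{E}^n_a$ and upper-bound it --- but the two organizations are trivially equivalent.
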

\begin{proof}
Observe that 
\begin{equation*}
\mathcal{I}_{a,5}^n(\alpha,t)=\sum_{k=1}^n\frac{\mathcal{M}_{k+5}(t)\alpha^{ak}}{\Gamma(ak+1)}\geq\sum_{k=1}^n\int_{\{|p|
\geq \frac{1}{\sqrt\alpha}\}}\text{d}p\,\frac{|p|^{k+5}\alpha^{ak}}{\Gamma(ak+1)}f(t,p)\,.
\end{equation*}
Note that in the set $\{|p|\geq\frac{1}{\sqrt\alpha}\}$ one has $|p|^{k+5}\geq\frac{|p|^k}{\alpha^{5/2}}$, therefore
\begin{align*}
&\mathcal{I}_{a,6}^n(\alpha,t) \geq \frac{1}{\alpha^{5/2}}\sum_{k=1}^n\int_{\{|p|
\geq\frac{1}{\sqrt\alpha}\}}\text{d}p\,\frac{|p|^{k}\alpha^{ak}}{\Gamma(ak+1)}f(t,p)\\
&=\frac{1}{\alpha^{5/2}}\bigg(\sum_{k=1}^n\int_{\mathbb{R}^3}\text{d}p\,\frac{|p|^{k}\alpha^{ak}}{\Gamma(ak+1)}f(t,p) - \sum_{k=1}^n\int_{\{|p|<\frac{1}{\sqrt\alpha}\}}\text{d}p\,\frac{|p|^{k}\alpha^{ak}}{\Gamma(ak+1)}f(t,p)\bigg)\,.
\end{align*}
In the set $\{|p|<\frac{1}{\sqrt\alpha}\}$ one has $|p|^k<|p|\alpha^{-(k-1)/2}$, consequently
\begin{align*}\nonumber
\mathcal{I}_{a,5}^n&(\alpha,t)\geq\frac{1}{\alpha^{5/2}}\bigg(\mathcal{E}_a^n(t)-\sum_{k=1}^n\int_{\mathbb{R}^3}\text{d}p\,\frac{\alpha^{-(k-1)/2}\alpha^{ak}}{\Gamma(ak+1)}f(t,p)|p|\bigg)\\
& = \frac{1}{\alpha^{5/2}}\mathcal{E}_a^n(t)- \frac{\mathcal{M}_{1}}{\alpha^{2}}\sum_{k=1}^n\frac{\alpha^{(a-1/2)k}}{\Gamma(ak+1)}\geq \frac{1}{\alpha^{5/2}}\mathcal{E}_a^n(t)- \frac{\mathcal{M}_{1}}{\alpha^{2}}\mathcal{E}_a(\alpha^{a-1/2})\,.
\end{align*}
\end{proof}
\begin{proof}\textbf{(of Theorem \ref{Theorem:PropaExpo})}  The proof consists in showing that for any $a\in [1,\infty)$, there exists positive constant $\alpha$ such that
\begin{equation}\label{Theorem:PropaExpo:0}
\mathcal{E}_a^n(\alpha,t) \leq 2, \quad \forall\, t\geq0,\quad \forall\, n\in \mathbb{N}\backslash\{0\}.
\end{equation}
For this purpose we define for sufficiently small $\alpha>0$, chosen in the sequel, the sequence of times
$$T_n:=\sup\big\{t\geq 0\, \big|\, \mathcal{E}_a^n(\alpha,\tau) \leq 2, \forall\,\tau\in[0,t]\big\}$$
and prove that $T_n=+\infty$.  This sequence of times is well-defined and positive.  Indeed, for any $\alpha\leq\alpha_0$
\begin{align*}
\mathcal{E}^n_a(\alpha,0)&=\sum_{k=1}^n\frac{\mathcal{M}_k(0)\alpha^{ak}}{\Gamma(ak+1)}\leq\sum_{k=1}^n\frac{\mathcal{M}_k(0)\alpha_0^{ak}}{\Gamma(ak+1)}=\int_{\mathbb{R}^{3}}\text{d}p\,f_0(p)\mathcal{E}_a(\alpha_0^a|p|) \leq 1\,.
\end{align*}
Since each term $\mathcal{M}_k(t)$ is continuous in $t$, the partial sum $\mathcal{E}^n_a(\alpha,t)$ is also continuous in $t$. Therefore, $\mathcal{E}^n_a(\alpha,t) \leq 2$ in some nonempty interval $(0,t_{n})$ and, thus, $T_n$ is well-defined and positive for every $n\in\mathbb{N}$.

Now, let us establish a differential inequality for the partial sums that implies $T_n=+\infty$.  Note that 
\begin{equation*}
\tfrac{n_{c}}{\kappa_0}\frac{\text{d}}{\text{d}t} \mathcal{M}_{k} \leq 2\sum_{i=1}^{\left[\frac{k+1}{2}\right]}{{k}\choose{i}}\mathcal{M}_{i+2}\mathcal{M}_{k-i} - c_{k}\,\mathcal{M}_{k+5}\,.
\end{equation*}
Here $c_{k}>0$ was defined in Lemma \ref{control1}.  Multiplying the above inequality by $\frac{\alpha^k}{\Gamma(ak+1)}$ and summing with respect to $k$ in the interval $k_0\leq k \leq n$, with $k_0\geq1$ to be chosen later on sufficiently large,
\begin{align}\label{Theorem:PropaExpo:1}
\begin{split}
\tfrac{n_{c}}{\kappa_0}\frac{\text{d}}{\text{d}t}&\sum_{k=k_0}^n \frac{\mathcal{M}_{k}\,\alpha^k}{\Gamma(ak+1)}\\
&\leq 2\sum_{k=k_0}^n\sum_{i=1}^{\left[\frac{k+1}{2}\right]}{{k}\choose{i}}\frac{\mathcal{M}_{i+2}\mathcal{M}_{k-i}\,\alpha^k}{\Gamma(ak+1)} - c_{k_0}\sum_{k=k_0}^n\frac{\mathcal{M}_{k+5}\,\alpha^k}{\Gamma(ak+1)}\,.
\end{split}
\end{align}
Here we used the fact that $c_{k}$ increases in $k$.  We observe that the sum on the left side of \eqref{Theorem:PropaExpo:1} will become $\tfrac{n_{c}}{\kappa_0}\frac{\text{d}}{\text{d}t}\mathcal{E}^n_a(\alpha,t)$ after adding
\begin{equation}\label{Theorem:PropaExpo:1aa}
\tfrac{n_{c}}{\kappa_0}\frac{\text{d}}{\text{d}t} \sum_{k=1}^{k_0-1}\frac{\mathcal{M}_{k}\,\alpha^k}{\Gamma(ak+1)} \leq C(k_0,\alpha_0,a) < \infty
\end{equation}
to this expression.  The latter inequality holds due to the choice $\alpha\le\alpha_0$ and the control of moments Theorem \ref{Theorem:PropaPolynomial}.  Therefore, from \eqref{Theorem:PropaExpo:1} and \eqref{Theorem:PropaExpo:1aa}, we obtain the differential inequality
\begin{align}\label{Theorem:PropaExpo:1a}
\begin{split}
\tfrac{n_{c}}{\kappa_0}\frac{\text{d}}{\text{d}t}\mathcal{E}^n_a(\alpha,t) \leq 2\sum_{k=k_0}^n&\sum_{i=1}^{\left[\frac{k+1}{2}\right]}{{k}\choose{i}}\frac{\mathcal{M}_{i+2}\mathcal{M}_{k-i}\,\alpha^k}{\Gamma(ak+1)}\\
& - c_{k_{0}}\sum_{k=k_0}^n\frac{\mathcal{M}_{k+5}\,\alpha^k}{\Gamma(ak+1)} + C(k_0,\alpha_0,a).
\end{split}
\end{align}
Let us now estimate the sum on the right side of \eqref{Theorem:PropaExpo:1a}.  Again, we deduce from propagation of moments Theorem \ref{Theorem:PropaPolynomial} that
$$\sum_{k=1}^{k_0}\frac{\mathcal{M}_{k+5}\,\alpha^k}{\Gamma(ak+1)}\leq \sum_{k=1}^{k_0}\frac{\mathcal{M}_{k+5}\,\alpha_0^k}{\Gamma(ak+1)}\leq C(k_0,\alpha_0,a)\,,$$
which leads to the following estimate for \eqref{Theorem:PropaExpo:1a} 
\begin{align}\label{Theorem:PropaExpo:2}
\begin{split}
\tfrac{n_{c}}{\kappa_0}\frac{\text{d}}{\text{d}t}\mathcal{E}^n_a(\alpha,t) \leq 2\sum_{k=k_0}^n&\sum_{i=1}^{\left[\frac{k+1}{2}\right]}{{k}\choose{i}}\frac{\mathcal{M}_{i+2}\mathcal{M}_{k-i}\,\alpha^k}{\Gamma(ak+1)}\\
& - c_{k_{0}}\sum_{k=1}^n\frac{M_{k+5}\,\alpha^k}{\Gamma(ak+1)}+C(k_0,\alpha_0,a)\,.
\end{split}
\end{align}
Therefore, as a consequence of the definition of $\mathcal{I}_{a,5}^n$ and Lemma \ref{Lemma:PropaExpo}
\begin{align}\label{Theorem:PropaExpo:2}
\begin{split}
\tfrac{n_{c}}{\kappa_0}\frac{\text{d}}{\text{d}t} \mathcal{E}^n_a(\alpha,t)\leq 2\sum_{k=k_0}^n&\sum_{i=1}^{\left[\frac{k+1}{2}\right]}{{k}\choose{i}}\frac{\mathcal{M}_{i+2}\mathcal{M}_{k-i}\,\alpha^k}{\Gamma(ak+1)} - c_{k_0}\,\mathcal{I}^n_{a,5} + C(k_0,\alpha_0,a)\\
&\leq \tfrac{2C_{a}}{(ak_0)^{a}}\mathcal{E}^n_{a}\,\mathcal{I}^n_{a,2} - c_{k_{0}}\,\mathcal{I}^n_{a,5}+C(k_0,\alpha_0,a)\,.
\end{split}
\end{align}
We now estimate the right hand side of \eqref{Theorem:PropaExpo:2} starting with the term $\mathcal{I}^n_{a,2}$.  Using Cauchy inequality $|p|^2 \leq \frac{3}{5} +\frac{2}{5}|p|^5\,$, then 
$$\mathcal{M}_{k+2}\leq \tfrac{3}{5}\mathcal{M}_k + \tfrac{2}{5}\mathcal{M}_{k+5}\,,\qquad k\geq0\,.$$
Multiplying this inequality with $\frac{\alpha^{ak}}{\Gamma(ak+1)}$ and summing with respect to $k$ in the interval $0\leq k \leq n$ yields
$$\mathcal{I}_{a,2}^n\leq \tfrac{3}{5}\mathcal{E}_a^n + \tfrac{2}{5}\mathcal{I}_{a,5}^n\leq \tfrac{6}{5} + \tfrac{2}{5}\mathcal{I}_{a,5}^n\,,$$
where the last inequality follows since we are considering $t\in[0,T_n]$ so that $\mathcal{E}_a^n\leq 2$.  Therefore,
\begin{equation}\label{Theorem:PropaExpo:4}
\tfrac{n_{c}}{\kappa_0}\frac{\text{d}}{\text{d}t} \mathcal{E}^n_{a}
\leq \tfrac{5\,C_{a}}{(ak_0)^{a}}\big( 1 + \tfrac{1}{3}\mathcal{I}^n_{a,5}\big) - c_{k_{0}}\mathcal{I}^n_{a,5} + C(k_0,\alpha_0,a)\,.
\end{equation}
Choosing $k_0:=k_{0}(a)$ sufficiently large, the term $\frac{5C_{a}}{3(ak_0)^{a}}\mathcal{I}^n_{a,5}$ is absorbed by $\frac{ c_{k_{0}} }{2}\mathcal{I}_{a,5}^n$.  Thus,
\begin{equation}\label{Theorem:PropaExpo:4}
\tfrac{n_{c}}{\kappa_0}\frac{\text{d}}{\text{d}t} \mathcal{E}^n_{a}
\leq -\tfrac{c_{k_{0}}}{2}\mathcal{I}^n_{a,5}+C(\mathcal{M}_{1},\alpha_0,a)\,.
\end{equation}
Estimating the right  side of \eqref{Theorem:PropaExpo:4} in terms of $\mathcal{E}^n_{a}$ using Lemma \ref{Lemma:EIRelation},  it is concluded that
\begin{equation*}
\tfrac{n_{c}}{\kappa_0}\frac{\text{d}}{\text{d}t} \mathcal{E}^n_{a}
\leq - \tfrac{c_{k_{0}}}{2\alpha^{5/2}}\mathcal{E}^n_{a}+\tfrac{c_{k_{0}}}{2\alpha^{2}}\,\mathcal{M}_1\,\mathcal{E}_a(\alpha^{a-1/2}) + C(\mathcal{M}_{1},\alpha_0,a)\,.
\end{equation*}
Therefore, one has that for $t\in[0,T_{n}]$
\begin{equation}\label{Theorem:PropaExpo:5}
\mathcal{E}_a^n \leq \max\Big\{1,\tfrac{2\alpha^{5/2}}{c_{k_{0}}}\Big(\tfrac{c_{k_{0}}}{2\alpha^{2}}\,\mathcal{M}_1\,\mathcal{E}_a(\alpha^{a-1/2})+C(\mathcal{M}_{1},\alpha_0,a)\Big)\Big\} < 2\,,
\end{equation}
provided that $\alpha:=\alpha(\mathcal{M}_{1},\alpha_0,a)>0$ is sufficiently small, for instance such that
\begin{equation*}
\tfrac{2\alpha^{5/2}}{c_{k_{0}}}\left(\tfrac{c_{k_{0}}}{2\alpha^{2}}\mathcal{M}_1\mathcal{E}_a(\alpha^{a-1/2}) + C(\mathcal{M}_{1},\alpha_0,a)\right)<2\,.
\end{equation*}
Given the continuity of $\mathcal{E}_a^n(\alpha,t)$ with respect to $t$, estimate \eqref{Theorem:PropaExpo:5} readily implies that $T_n = +\infty$.  Therefore, $\mathcal{E}^n_a(\alpha,t)\leq 2$ for $t\geq0$ and $n\in\mathbb{N}\backslash\{0\}$. Now taking the limit as $n\to\infty$ and using the definition 
of  Mittag-Leffler moments of order $a\in[1,\infty)$ and rate $\alpha>0$, as defined in \eqref{MittagLefflerMomentSum},  yields
$$\int_{\mathbb{R}^3}\text{d}p\,f(t,p)\,\mathcal{E}_a(\alpha^a|p|)=\lim_{n\to\infty}\mathcal{E}_a^n(\alpha,t)\leq 2\,.$$ 
This concludes the argument. 
\end{proof}
\subsection{Creation of exponential tails}
\begin{theorem}\label{Theorem:CreationExpo}
Let the pair $0\leq(f,n_{c})\in\mathcal{C}\big([0,\infty);\mathcal{S}\big)\times\mathcal{C}\big([0,\infty)\big)$ be the solution of \eqref{BGP}.  Assume that $(f_0,n_0)>0$ is such that condition \eqref{ncthres} is satisfied for some $\delta>0$.  Then, there exists a constant $\alpha>0$ depending on $m_{2}(0)$, $m_{3}$, $n_0$, and $\delta>0$, such that
\begin{equation}\label{Theorem:PropaExpo:Eq0}
\int_{\mathbb{R}^3}dp\,f(t,p)|p|e^{\alpha\,\min\{1,t^{\frac{1}{5}}\}|p|} \leq \frac{1}{2\alpha},\quad\forall\, t>0.
\end{equation}
\end{theorem}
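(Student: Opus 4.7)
The plan is to combine the polynomial-moment creation bound of Proposition~\ref{coro:creation} with the partial-sum propagation technique of Theorem~\ref{Theorem:PropaExpo} (in the exponential case $a=1$). Expanding the weight,
\begin{equation*}
\int_{\mathbb{R}^3}dp\,f(t,p)\,|p|\,e^{\alpha(t)|p|}=\sum_{k=0}^\infty \frac{\mathcal{M}_{k+1}(t)\,\alpha(t)^k}{k!},\qquad \alpha(t):=\alpha\min\{1,t^{1/5}\},
\end{equation*}
so it suffices to control the truncations $\mathcal{F}^n(t):=\sum_{k=0}^n \mathcal{M}_{k+1}(t)\alpha(t)^k/k!$ by $1/(2\alpha)$, uniformly in $n$, for $\alpha$ chosen sufficiently small. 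I split the proof at $t=1$: a creation step on $(0,1]$ and a propagation step on $[1,\infty)$.

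On $(0,1]$, where $\alpha(t)=\alpha t^{1/5}$, Proposition~\ref{coro:creation} applied with index $k+3$ gives, for $k\geq 1$,
\begin{equation*}
\mathcal{M}_{k+1}(t) \leq |\mathbb{S}^2|(\delta k)^{-k/5}\frac{m_3}{t^{k/5}} + |\mathbb{S}^2|\,C_{k+3}\,m_3^{(k+4)/4}.
\end{equation*}
Multiplying by $\alpha^k t^{k/5}/k!$ the singular factor $t^{-k/5}$ is exactly consumed by the weight, while $t^{k/5}\leq 1$ on the regular piece, yielding
\begin{equation*}
\mathcal{F}^n(t)\leq \mathcal{M}_1(0) + |\mathbb{S}^2|\sum_{k\geq 1}\frac{\alpha^k}{k!}\bigl((\delta k)^{-k/5}m_3 + C_{k+3}\,m_3^{(k+4)/4}\bigr).
\end{equation*}
The first series converges super-geometrically by Stirling; for the second, I claim the constants $C_{k+3}$ coming from the proof of Theorem~\ref{Theorem:PropaPolynomial} grow at most geometrically (the binomial inequality $(x+y)^k-x^k-y^k\leq 2^k(yx^{k-1}+y^{k-1}x)$ together with the interpolation exponent $(k-1)/(k+4)$ in the construction of the super-solution yield $C_k=O(2^k)$), so the series is dominated by $\exp(C'\alpha m_3^{1/4})m_3$. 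The resulting bound $\mathcal{F}^n(t)\leq F(\alpha,m_3,\delta)$ stays finite as $\alpha\to 0$, so $2\alpha F(\alpha,m_3,\delta)\leq 1$ for $\alpha$ sufficiently small.

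On $[1,\infty)$, where $\alpha(t)=\alpha$ is constant, the creation step furnishes the initial bound $\mathcal{F}^n(1)\leq 1/(2\alpha)$, and the argument now mimics the proof of Theorem~\ref{Theorem:PropaExpo} with $a=1$ and an index shift $k\mapsto k+1$. Multiplying the moment ODI
\begin{equation*}
\tfrac{n_c}{\kappa_0}\tfrac{d}{dt}\mathcal{M}_{k+1}\leq 2\sum_{i=1}^{[(k+2)/2]}\binom{k+1}{i}\mathcal{M}_{i+2}\mathcal{M}_{k+1-i}-c_{k+1}\mathcal{M}_{k+6}
\end{equation*}
by $\alpha^k/k!$ and summing over $k\geq k_0$, Lemma~\ref{Lemma:PropaExpo} bounds the nonlinear sum by $C_1 k_0^{-1}\,\mathcal{F}^n\,\mathcal{I}^n_{1,2}$ against the dissipation $-c_{k_0}\mathcal{I}^n_{1,5}$; Lemma~\ref{Lemma:EIRelation} then converts the latter into $-c\,\mathcal{F}^n/\alpha^{5/2}$ up to a harmless constant. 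Choosing $k_0$ large enough to absorb the nonlinear contribution into half of the dissipation and running the standard time-continuity argument used for Theorem~\ref{Theorem:PropaExpo} closes $\mathcal{F}^n(t)\leq 1/(2\alpha)$ for all $t\geq 1$ and $n\in\mathbb{N}$.

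The main obstacle is the quantitative book-keeping of the constants $C_{k+3}$ of Proposition~\ref{coro:creation}: the whole creation step hinges on their geometric growth in $k$, which requires revisiting the proof of Theorem~\ref{Theorem:PropaPolynomial} and carefully tracking how $\tilde C_k/\tilde c_k$ scales. Should that route prove delicate, the fallback is to differentiate $\mathcal{F}^n$ directly on $(0,1]$; the time-derivative of the weight $t^{k/5}$ then produces an extra source term $\sum_k (k/(5t))\mathcal{M}_{k+1}\alpha^k t^{k/5}/k!$ which must be absorbed into the dissipation $-c_{k_0}\mathcal{I}^n_{1,5}$ via the interpolation $\mathcal{M}_{k+1}\leq \mathcal{M}_1^{5/(k+5)}\mathcal{M}_{k+6}^{k/(k+5)}$ and a Young inequality with parameter $\eta\sim t$; the exponent $1/5$ in the prescribed rate $\alpha t^{1/5}$ is precisely the scaling that makes the Young residual manageable as $t\to 0^+$.
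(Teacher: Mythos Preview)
Your fallback---differentiating the partial sum directly on $(0,1]$ with the time-dependent rate $\alpha t^{1/5}$, then invoking propagation on $[1,\infty)$---is exactly the route the paper takes. Your primary approach (summing the creation bounds of Proposition~\ref{coro:creation} term by term) is a genuinely different strategy; it would work if the constants $C_{k}$ there grow at most geometrically, and your heuristic $C_k=O(2^k)$ is plausible from the binomial estimate, but the paper never tracks those constants and simply avoids the issue by using the differential-inequality method throughout.

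Where your fallback deviates from the paper is in handling the extra source term coming from $\partial_t(t^{k/5})$. You propose interpolating $\mathcal{M}_{k+1}\leq \mathcal{M}_1^{5/(k+5)}\mathcal{M}_{k+6}^{k/(k+5)}$ and then Young with $\eta\sim t$; as written this produces a coefficient on $\mathcal{M}_{k+6}$ carrying a negative power of $t$ that is not obviously dominated by the dissipation uniformly in $k$ as $t\to0^+$. The paper's device is simpler and exact: the extra term is
\[
\frac{\alpha}{5t^{4/5}}\sum_{k\ge k_0}\mathcal{M}_k\,\frac{(t^{1/5}\alpha)^{k-1}}{(k-1)!},
\]
and shifting the index $k\mapsto k+5$ in the tail turns this into $\tfrac{\alpha^5}{5}\sum_{j\ge k_0}\mathcal{M}_{j+5}(t^{1/5}\alpha)^{j}/(j+4)!\le \tfrac{\alpha^5}{5}\,\mathcal{I}^n_{1,5}$, since the four extra powers of $t^{1/5}$ cancel $t^{-4/5}$ exactly. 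This is absorbed into the dissipation just by taking $\alpha$ small, with the finitely many leftover terms $k_0\le k<k_0+5$ contributing only an integrable $O(\alpha^{k_0}t^{-4/5})$ remainder. A second index shift (rather than Lemma~\ref{Lemma:EIRelation}) then gives $\mathcal{I}^n_{1,5}\ge (t\alpha^5)^{-1}\mathcal{E}^n_1 - C(t^{4/5}\alpha^4)^{-1}$, yielding a linear ODI in $\mathcal{E}^n_1$ that integrates directly to $\mathcal{E}^n_1\lesssim t^{1/5}$. No interpolation or Young is needed; the exponent $1/5$ is what makes the index shift land on $\mathcal{I}^n_{1,5}$ with a clean $\alpha^5$ prefactor.
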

\begin{proof}
Thanks to Corollary \ref{coro:creation}, the moments of $f(t)$ enjoy the estimate
\begin{equation*}
m_k(t)\leq C_k(\delta,m_3)\Big(t^{-\frac{k-3}{5}} + 1 \Big), \ \ \forall \, k>3\,.
\end{equation*}
This implies that for any $0\leq t\leq 1$
\begin{equation}\label{CreationEq1}
\mathcal{E}^{n}_{1}(t^{\frac{1}{5}}\alpha,t)=\int_{\mathbb{R}^{3}}\text{d}p\,f(t,p)\mathcal{E}^{n}_{1}\big(t^{\frac{1}{5}}\alpha|p|\big)\leq C_{n}(\alpha)\,t^{\frac{1}{5}}\,, \quad \alpha>0\,.
\end{equation}
Fix parameters $\alpha, \vartheta\in(0,1]$ and define
\begin{equation*}
 T_{n}:=\sup\Big\{t\in(0,1] \big| \mathcal{E}^{n}_{1}(t^{\frac{1}{5}}\alpha,t) \leq t^{\frac{1-\vartheta}{5}}\Big\}\,.
\end{equation*}
We proof that for sufficiently small $\alpha>0$ depending only on the initial data (through $m_{2}(0)$, $m_{3}$, and $n_0$), it holds that $T_{n}=1$ for all $n\in\mathbb{N}$ and $\vartheta\in(0,1]$.  One notices first that $T_{n}>0$ for each $n$ thanks to \eqref{CreationEq1}.  Also, for $n\geq k_{0}\geq 1$ we have that
\begin{align}\label{CreationEq2}
\begin{split}
\frac{\text{d}}{\text{d}t}&\sum^{n}_{k=k_0} \mathcal{M}_{k}(t)\frac{(t^{\frac{1}{5}}\alpha)^{k}}{k!}\\
&= \sum^{n}_{k=k_0} \mathcal{M}'_{k}(t)\frac{(t^{\frac{1}{5}}\alpha)^{k}}{k!} + \frac{\alpha}{5t^{\frac{4}{5}}}\sum^{n}_{k=k_0} \mathcal{M}_{k}(t)\frac{(t^{\frac{1}{5}}\alpha)^{k-1}}{(k-1)!}\,. 
\end{split}
\end{align}
Observe that for the last term in the right side of \eqref{CreationEq2}
\begin{align*}
\frac{\alpha}{5t^{\frac{4}{5}}}\sum^{n}_{k=k_0} &\mathcal{M}_{k}(t)\frac{(t^{\frac{1}{5}}\alpha)^{k-1}}{(k-1)!} \\
&= \frac{\alpha}{5t^{\frac{4}{5}}}\sum^{n}_{k=k_0+5} \mathcal{M}_{k}(t)\frac{(t^{\frac{1}{5}}\alpha)^{k-1}}{(k-1)!} + \frac{\alpha}{5t^{\frac{4}{5}}}\sum^{k_0+5}_{k=k_0} \mathcal{M}_{k}(t)\frac{(t^{\frac{1}{5}}\alpha)^{k-1}}{(k-1)!}\\
&= \frac{\alpha^{5}}{5}\sum^{n-5}_{k=k_{0}}\mathcal{M}_{k+5}(t)\frac{(t^{\frac{1}{5}}\alpha)^{k}}{(k+4)!} + \frac{\alpha}{5t^{\frac{4}{5}}}\sum^{k_0+5}_{k=k_0} \mathcal{M}_{k}(t)\frac{(t^{\frac{1}{5}}\alpha)^{k-1}}{(k-1)!}\\
&\leq \frac{\alpha^{5}}{5}\sum^{n}_{k=k_{0}}\mathcal{M}_{k+5}(t)\frac{(t^{\frac{1}{5}}\alpha)^{k}}{k!} + \frac{\alpha^{k_0}}{t^{\frac{4}{5}}}C(k_0,m_{3})\,,\quad 0<\alpha\leq1\,.
\end{align*}
Thus, arguing as in \eqref{Theorem:PropaExpo:1}-\eqref{Theorem:PropaExpo:2} we conclude that for the quantities
\begin{equation*}
\mathcal{E}^{n}_{1}:=\mathcal{E}^{n}_{1}(t^{\frac{1}{5}}\alpha,t)\,,\quad \mathcal{I}^{n}_{1,5}:=\mathcal{I}^{n}_{1,5}(t^{\frac{1}{5}}\alpha,t)\,,
\end{equation*}
it follows that
\begin{equation}\label{CreationEq3}
\frac{\text{d}}{\text{d}t}\mathcal{E}^{n}_{1}\leq \tfrac{C\,\kappa_0}{k_{0}\,n_{c}[f](t)}\mathcal{E}^{n}_{1}\,\mathcal{I}^{n}_{1,2} - \big(\tfrac{\kappa_0}{n_{c}[f](t)}c_{k_{0}} - \tfrac{\alpha^{5}}{5} \big)\mathcal{I}^{n}_{1,5} + \frac{\alpha}{t^{\frac{4}{5}}}C(k_0,m_{3})\,.
\end{equation}
Using that $\mathcal{I}^{n}_{1,2}\leq \mathcal{E}^{n}_{1} + \mathcal{I}^{n}_{1,5}\leq 1+ \mathcal{I}^{n}_{1,5}$, recalling the definition of $T_{n}$, it follows from \eqref{CreationEq3}
\begin{align}\label{CreationEq4}
\begin{split}
&\frac{\text{d}}{\text{d}t}\mathcal{E}^{n}_{1} \leq \tfrac{C\,\kappa_0}{k_{0}\,n_{c}[f](t)}\\
& - \Big(\tfrac{\kappa_0}{n_{c}[f](t)}\big(c_{k_{0}} - \tfrac{C}{k_0}\big) - \tfrac{\alpha^{5}}{5} \Big)\mathcal{I}^{n}_{1,5} + \frac{\alpha}{t^{\frac{4}{5}}}C\big(k_0,m_{3}\big)\,,\quad 0 < t \leq T_{n}\,.
\end{split}
\end{align}
Now, fix $k_0\in\mathbb{N}$ sufficiently large and, then, $\alpha\in(0,1]$ sufficiently small such that 
\begin{equation*}
c_{k_{0}} - \tfrac{C}{k_0} \geq \tfrac{c_{k_{0}}}{2}\,,\qquad \tfrac{\alpha^{5}}{5}\leq \frac{\kappa_0\,c_{k_0}}{4(m_{2}(0) + n_0)}\leq \frac{\kappa_0\,c_{k_0}}{4n_{c}[f](t)}\,, 
\end{equation*}
to conclude from \eqref{CreationEq4} that

\begin{equation}\label{CreationEq5}
\frac{\text{d}}{\text{d}t}\mathcal{E}^{n}_{1}\leq \tfrac{C\,\kappa_0}{k_{0}\,\delta} - \frac{\kappa_0\,c_{k_0}}{4(m_{2}(0) + n_0)}\mathcal{I}^{n}_{1,5} + \frac{\alpha}{t^{\frac{4}{5}}}C(k_0,m_{3})\,,\quad 0 < t \leq T_{n}\,.
\end{equation}
Also, observe that
\begin{align*}
\mathcal{I}^{n}_{1,5} &= \sum^{n}_{k=1}\mathcal{M}_{k+5}(t)\frac{(t^{\frac{1}{5}}\alpha)^{k}}{k!} \\
&= \frac{1}{t\alpha^{5}}\sum^{n+5}_{k=6}\mathcal{M}_{k}(t)\frac{(t^{\frac{1}{5}}\alpha)^{k}}{(k-5)!}\geq \frac{1}{t\alpha^{5}}\sum^{n}_{k=6}\mathcal{M}_{k}(t)\frac{(t^{\frac{1}{5}}\alpha)^{k}}{k!}\\
& = \frac{1}{t\alpha^{5}}\mathcal{E}^{n}_{1} - \frac{1}{t\alpha^{5}}\sum^{5}_{k=1}\mathcal{M}_{k}(t)\frac{(t^{\frac{1}{5}}\alpha)^{k}}{k!}\geq \frac{1}{t\alpha^{5}}\mathcal{E}^{n}_{1} - \frac{C(m_{3})}{t^{\frac{4}{5}}\alpha^{4}}\,.
\end{align*}
Together with \eqref{CreationEq5}, this leads finally to
\begin{equation*}
\frac{\text{d}}{\text{d}t}\mathcal{E}^{n}_{1}\leq \frac{C(m_2(0),n_0,m_{3},\delta)}{t^{\frac{4}{5}}\alpha^{4}} - \frac{c(m_2(0),n_0,m_{3})}{t\alpha^{5}}\mathcal{E}^{n}_{1}\,,\quad 0<t\leq T_{n}\,.
\end{equation*}
A simple integration of this differential inequality shows that choosing $\alpha>0$ sufficiently small, say
\begin{equation*}
\frac{\alpha}{c+\alpha^{5}/5}\frac{C}{c}<1, 
\end{equation*}
implies that $\mathcal{E}^{n}_{1}<t^{\frac{1}{5}}$.  That is,
\begin{equation*}
 \int_{\mathbb{R}^{3}}\text{d}p\,f(t,p)\mathcal{E}^{n}_{1}(t^{\frac{1}{5}}\alpha|p|)<t^{\frac{1}{5}}\,,\quad 0 \leq t\leq T_{n}\,.
\end{equation*}
Time continuity of $\mathcal{E}^{n}_{1}$ and the maximality of $T_{n}$ imply that $T_{n}=1$ for all $n\geq 1$ and $\vartheta\in(0,1]$.  In particular, sending $\vartheta\rightarrow0$ and, then, $n\rightarrow\infty$ one arrives to
\begin{equation*}
\int_{\mathbb{R}^{3}}\text{d}p\,f(t,p)\mathcal{E}_{1}(t^{\frac{1}{5}}\alpha|p|)\leq t^{\frac{1}{5}}\,,\qquad 0 \leq t \leq 1\,.
\end{equation*}
The result follows after noticing that
\begin{equation*}
\mathcal{E}_{1}(t^{\frac{1}{5}}\alpha|p|)\geq t^{\frac{1}{5}}\alpha |p| e^{t^{\frac{1}{5}}\frac{\alpha}{2}|p|},\qquad 0\leq t\leq 1,
\end{equation*}
and recalling that, after creation, exponential tails will uniformly propagate thanks to Theorem \ref{Theorem:PropaExpo}.
\end{proof}
\\\\
\textbf{Acknowledgements.} 
This work has been partially supported by NSF grants DMS 143064 and  RNMS (Ki-Net) DMS-1107444.   The authors would like to thank Professor Daniel Heinzen, Professor Linda Reichl, Professor Mark Raizen and Professor Robert Dorfman for fruitful discussions on the topic.  Support from  the Institute of Computational Engineering and Sciences (ICES) at the University of Texas Austin is gratefully acknowledged.  The research was partially carried on while M.-B. Tran and R. Alonso were visiting ICES.  
\section{Appendix: Proof of Theorem~\ref{Theorem_ODE}}\label{Appendix}
{The proof follows the same lines of the argument of  Bressan's proof of Theorem A.1 in \cite{Bressan} with suitable modifications to deal with causal operators. The proof is divided into three steps:\\\\
\noindent
\textbf{Step 1.(Extension)}
Take $u\in\mathcal{C}\big([0,t];\mathcal{S}\big)$, for any fixed $t\in[0,T)$.  Using the fact that $\mathcal{S}$ is bounded, the causality of $Q(\cdot)$, and the uniform H\"{o}lder estimate
\begin{equation*}
\sup_{s\in[0,t]}\big\|Q(u)(s)\big\|\leq C\sup_{s\in[0,t]}\big\|u(s)\big\|^{\beta} \leq C\,C^{\beta}_{\mathcal{S}}\,.
\end{equation*}
Thanks to the uniform sub-tangent condition, for any fixed $\varepsilon\in(0,1)$ there exists $h(u,\varepsilon)>0$ such that 
\begin{equation*}
B\big(u(t)+h\,Q(u)(t),\varepsilon\big)\cap {\mathcal{S}}\backslash\big\{u(t)+hQ(u)(t)\big\}\neq\emptyset\,,\quad\forall\, h\in\big(0, h(u,\varepsilon)\big]\,.
\end{equation*}
We fix $h>0$ as $h:=\min\{1,(\varepsilon/2C)^{\frac{1}{\beta}}(C^{\beta}_{S}+2)^{-1},h(u,\varepsilon)\}$.  As a consequence, there exists $w$ in such set satisfying
\begin{equation*}
\big\|w - u(t) - h\,Q(u)(t)\big\|\leq \tfrac{\varepsilon h}{2}\,.
\end{equation*}
Consider now the linear map
\begin{equation*}
s\mapsto \rho(s)=u(t)+\frac{(s-t)\big(w-u(t)\big)}{h},\quad s\in[t,t+h]\,.
\end{equation*}
Give the fact that the set $\mathcal{S}$ is convex and closed, $\rho(s) \in \mathcal{S}$ for all $s\in[t,t+h]$. Moreover, since the right derivative is $\dot{\rho}(s)=\frac{w-u(t)}{h}$ in $[t,t+h)$, it follows that
\begin{equation*}
\big\|\dot{\rho}(s) - Q(u)(t)\big\|\leq \tfrac{\varepsilon}{2}\,,\quad s\in[t,t+h)\,.
\end{equation*}
Also, we observe that 
\begin{align}\label{Bre2}
\begin{split}
\big\|\rho(s)-u(t)\big\|=\Big\|\tfrac{(s-t)(w-u(t))}{h}\Big\|& \leq \big\|w-u(t)\big\|\\
&\hspace{-1cm}\leq h\big\|Q(u)(t)\big\|+\tfrac{\varepsilon h}{2}\leq h\big(C_{\mathcal{S}}^\beta + 1\big)\,.
\end{split}
\end{align}
Define now the extension $u_{e}\in\mathcal{C}\big([0,t+h];\mathcal{S}\big)$ as
\begin{equation*}
u_{e}(s)=\bigg\{
\begin{array}{cl}
u(s) & \text{for}\;s\in[0,t)\,,\\
\rho(s) & \text{for}\;s\in[t,t+h]\,,
\end{array}
\end{equation*}
Then, for any $0\leq t \leq s \in[t,t+h)\subset[0,T]$, the uniform H\"{o}lder continuity property of $Q$ and estimate \eqref{Bre2} imply that
\begin{align*}
&\big\|Q(u_{e})(s) - Q(u)(t)\big\|\\
&\hspace{-0.5cm}\leq C\Big(\sup_{\sigma\in[0,t]}\big\|u_{e}(\sigma) - u(\sigma)\big\|^{\beta} + \big\|u_{e}(s) - u(t)\big\|^{\beta} + |s-t|^{\beta}\Big)\\
& \le C\Big(\big\|\rho(s) - u(t)\big\|^{\beta} + h^{\beta}\Big) \leq C\big(C^{\beta}_{\mathcal{S}} + 2\big)h^{\beta} \leq \tfrac{\varepsilon}{2}\,.
\end{align*}
Therefore, for the extension $u_{e}$ follows that in the interval $s\in[t,t+h)$
\begin{align}\label{Theorem:ODE:E1}
\begin{split}
\big\|\dot{u}_{e}(s) &- Q(u_{e})(s)\big\| = \big\|\dot{\rho}(s) - Q(u)(t) + Q(u)(t) - Q(u_{e})(s)\big\|\\
&\leq\big\|\dot{\rho}(s) - Q(u)(t)\big\| + \big\|Q(u)(t) - Q(u_{e})(s)\big\| \leq \varepsilon\,.
\end{split}
\end{align}
And, as consequence of this fact
\begin{equation}\label{Theorem:ODE:E2}
\begin{split}
\sup_{s\in[t,t+h]}\big\|\dot{u}_{e}(s)\big\|&\leq 1+\sup_{s\in[0,t+h]}\big\|Q(u_{e})(s)\big\|\\
&\leq 1 + C\sup_{s\in[0,t+h]}\big\|u_{e}(s)\big\|^{\beta}\leq 1+ C\,C^{\beta}_{\mathcal{S}}\,. 
\end{split}
\end{equation}
valid for any $\varepsilon\in(0,1)$.\\

\noindent
\textbf{Step 2.(Piecewise approximations)} Fix $\varepsilon\in(0,1)$.  Starting from $t=0$ we use the extension procedure of Step 1 to construct a piecewise linear function $\rho:=\rho^{\varepsilon}\in\mathcal{C}\big([0,\tau);S\big)$ satisfying the estimates
\begin{equation}\label{step2}
\sup_{s\in[0,\tau)}\big\|\dot{\rho}(s) - Q(\rho)(s)\big\| \leq \varepsilon\,,\qquad \sup_{s\in[0,\tau)}\big\|\dot{\rho}(s)\big\|\leq C\,,
\end{equation}
with initial condition $\rho^{\varepsilon}(0)=u_0$.

\noindent
Suppose that $\rho$ is constructed on a series of intervals $[0,\tau_1]$, $[\tau_1,\tau_2]$, $\cdots$, $[\tau_n,\tau_{n+1}]$, $\cdots$. Moreover, suppose the increasing sequence $\{\tau_n\}$ is bounded and, set
\begin{equation*}
\tau=\lim_{n\to\infty}\tau_n\,.
\end{equation*}
Since $\dot{\rho}$ is uniformly bounded, the sequence $\{\rho(\tau_{n})\}$ has a limit. Therefore, we can define $\rho(\tau)$ as
\begin{equation*}
\rho(\tau)=\lim_{n\to\infty}\rho(\tau_n)\,.
\end{equation*}
This implies that $\rho$ is, in fact, defined on $[0,\tau]$.  It also implies, by the extension procedure of Step 1, that $\tau=T$. 

\noindent
\textbf{Step 3.(Limit)} Let us now consider two sequences of approximate solutions $u^\varepsilon$, $w^\varepsilon$, where $\varepsilon$ tends to $0$.  From Step 1 and Step 2, one can see that the time interval $[0,T]$ can be decomposed into $$\left(\bigcup_{\gamma}I_\gamma\right)\bigcup \mathfrak{N},$$ where $I_\gamma$ are countably many open intervals where $u^\varepsilon$, $w^\varepsilon$ are affine, and $\mathfrak{N}$ is of measure $0$.  Thus, we can take the derivative of the difference $\big\|u^\varepsilon(t)-w^\varepsilon(t)\big\|$ gives
\begin{align*}
\frac{d}{dt}\big\|u^\varepsilon(t)-&w^\varepsilon(t)\big\|=\Big[u^\varepsilon(t)-w^\varepsilon(t),\dot{u}^\varepsilon(t)-\dot{w}^\varepsilon(t)\Big]\\
&\leq \Big[u^\varepsilon-w^\varepsilon,Q(u^\varepsilon)(t)-Q(w^\varepsilon)(t)\Big] + 2\,C\,\varepsilon\\
\end{align*}
In the last inequality we used the first estimate in \eqref{step2}.  Integrating and using the one-sided Lipschitz property
\begin{equation*}
\big\|u^\varepsilon(t)-w^\varepsilon(t)\big\| \leq L\int^{t}_0\text{d}s\big\|u^\varepsilon(s)-w^\varepsilon(s)\big\|  +2\,C\,t\,\varepsilon,
\end{equation*}
which yields, by Gronwall's lemma, that
\begin{equation*}
\big\|u^\varepsilon(t)-w^\varepsilon(t)\big\|\leq \tfrac{2CT}{L}e^{LT}\varepsilon\,.
\end{equation*}
As a consequence, the sequence $\{u^\varepsilon\}$ is Cauchy and converges uniformly to a continuous limit $u\in\mathcal{C}\big([0,T];\mathcal{S}\big)$. Cleary, the function $u$ is \textit{the} solution of our equation.
\bibliographystyle{plain}\bibliography{QuantumBoltzmann}
\end{document}